\let\NAT@parse\undefined
\newcommand{\ubar}[1]{\underaccent{\bar}{#1}}
\title{\LARGE \bf
Input to State Stability of Bipedal Walking Robots: Application to DURUS
}
\author{Shishir Kolathaya, Jacob Reher and Aaron D. Ames
\thanks{This work is supported by the National Science Foundation through grants NRI-1526519}
\thanks{Shishir Kolathaya, Jacob Reher are with the School of Mechanical and Civil Engineering, California Institute of Technology,
        Pasadena, CA, USA
        {\tt\small \{sny,jreher\}@caltech.edu}}%
\thanks{Aaron D. Ames is with the Faculty of the School of Mechanical and Civil Engineering, California Institute of Technology,
        Pasadena, CA, USA
        {\tt\small ames@caltech.edu}}%
}
\begin{document}

\maketitle
\thispagestyle{empty}
\pagestyle{empty}

\begin{abstract}

Bipedal robots are a prime example of systems which exhibit highly nonlinear dynamics, underactuation, and undergo complex dissipative impacts. 
%
%
This paper discusses methods used to overcome  a wide variety of uncertainties, with the end result being stable bipedal walking.
%
%
%
%
%
The principal contribution of this paper is to establish sufficiency conditions for yielding input to state stable (ISS) hybrid periodic orbits, i.e., stable walking gaits under model-based and phase-based uncertainties.
In particular, it will be shown formally that exponential input to state stabilization (e-ISS) of the continuous dynamics, and hybrid invariance conditions are enough to realize stable walking in the $23$-DOF bipedal robot DURUS.
%
This main result will be supported through successful and sustained walking of the bipedal robot DURUS in a laboratory environment.




\end{abstract}

\section{Introduction}
\label{sec:introduction}

Bipedal locomotion techniques such as 
zero moment point (ZMP) \cite{VB05}, capture point \cite{6094435} and linear inverted pendulums \cite{5651082}, rely on a restricted set of motions to simplify the robot dynamics, 
i.e., forcing the center of mass (COM) to stay at a constant height. 
Enforcing constant COM height renders linear dynamics on the reduced order model. 
Subsequently, with this constraint, linear controllers can be applied by satisfying only one basic criterion: bounded-input-bounded-output stability (BIBO). While these methods help to increase robustness to the highly uncertain nonlinear dynamics, the resulting locomotive behaviors are often quasi-static and slow. 
Aiming to address these constraints, the bipedal walking community has worked towards 
utilizing the fullbody dynamics of the system 
in order to achieve complex behaviors that are not only fast but also very efficient. 
Several examples of successful realizations include 
\cite{umich_mabel,Hubicki2016,reheralgorithmic}. 

In order to realize dynamic behaviors such as running and dancing, it becomes necessary to exploit the natural nonlinear dynamics of the robot. With this goal, reduced order models and heuristics have been successfully used to design dynamic behaviors \cite{Hubicki2016,ACMRAIBERT}. On the other side, methods utilizing more formal methods have limited practical results and include control Lyapunov functions (CLFs) \cite{TAC:amesCLF}, combinations of control Lyapunov functions and control barrier functions (CBF) \cite{RSS2015_RobustCLF}. 
With a view toward exploring a more formal approach, we will identify and realize robust walking controllers 
%
that satisfy the equivalent of BIBO stability criterion for nonlinear systems; the input to state stability (ISS) criterion.


\begin{figure}[t!]
\centering
 \includegraphics[width=0.85\columnwidth]{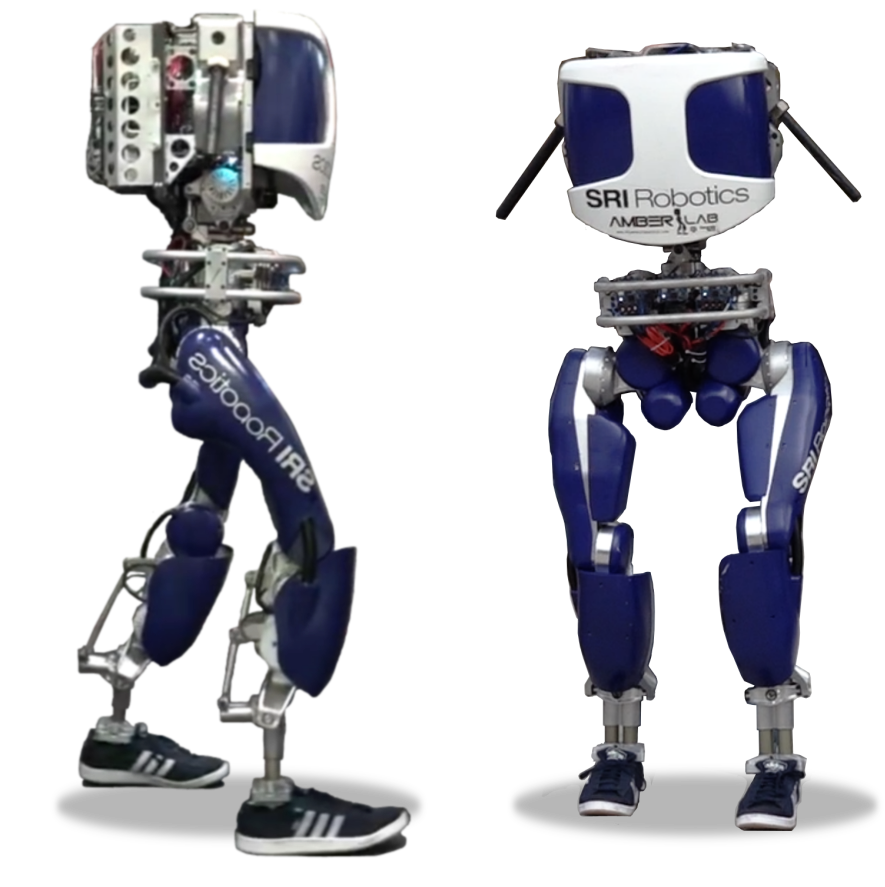}
 \caption{DURUS robot designed by SRI International.}
\label{fig:durusside}
\vspace{-10pt}
\end{figure}

Input to state stability (ISS) theory, mainly developed and popularized by Sontag \cite{sontag2008input} during the 1980's, was a result of 
a growing need for a stronger stability criterion (stronger than BIBO) 
on nonlinear systems.
ISS for hybrid systems was studied only after the mid 2000's \cite{cai2005results,hespanha2005input}, where the focus was on identifying sufficient conditions for stability. 
Bipedal walking robots are a classic example of mechanical hybrid systems involving alternating sequences of continuous (swing) and discrete (footstrike) events. Practical applications of controllers that yield ISS, called input to state stabilizing (ISSing) controllers, for bipedal robots are limited in literature. 
There is work on phase uncertainty to state stability \cite{kolathaya2016time,hscc17running}, which resulted in stable walking and running. 
However, some of the control implementations, like the use of PD controllers for tracking at the low level, were not formally justified or studied.
Therefore, the main contribution of this paper is to establish theoretical results on these planar locomotive controllers, and then use these concepts to realize robust walking behaviors on the bipedal robot DURUS. 
More importantly, this paper formally establishes that exponential stabilization of the continuous dynamics renders the full order hybrid system ISS under a wide variety of modeling and sensing uncertainties.

DURUS is a 23-DOF bipedal robot (\figref{fig:durusside}) with 15 actuators and two linear springs. 
Achieving walking on DURUS is complex due to unmodeled compliance dynamics, underactuation and somewhat sparse sensing abilities - lacking both foot contact force sensors and joint torque sensors. The uncertainties studied in this paper are (but not limited to) modeling uncertainty and phase based uncertainty. To address the model based uncertainty, PD control laws will be used both in simulation and experiment. It is a well known fact that PD control laws render robotic systems input to state stable (or integral-ISS) in the continuous dynamics \cite{7844077,angeli1999input}. 
To address the uncertainty due to the aberrations in the phase variable, which dictates the progression of the gait, we will utilize a time based parameterization of the reference trajectories (see \cite{kolathaya2016time}). With the realization of these controllers experimentally on DURUS, the end result is ISSing control laws that are robust to these two uncertainties.

 Section \ref{sec:iss} contains a brief preliminary on input to state stability. Section \ref{sec:issrobot} contains a brief overview on the ISS properties of robotic systems in the continuous dynamics. 
Section \ref{sec:hs} will introduce the hybrid systems model. Section \ref{sec:hs} will also describe the walking controllers and Section \ref{sec:theory} will describe the main result of the paper: ISS of walking robots. Finally, Section \ref{sec:results} will conclude with simulation and experimental results on DURUS.

\section{Preliminaries on Input to State Stability}\label{sec:iss}

This section will introduce basic definitions and results related to input to state stability (ISS); for a detailed survey on ISS see \cite{sontag2008input}. Most of the content in this section is based on \cite{sontag1989smooth,Sontag89furtherfacts,sontag1991input,sontag1995characterizations}. 


 We consider a general affine nonlinear system in the following fashion:
%
\begin{align}
\label{eq:system3}
\dot{x} =& f(x) + g(x) d,
\end{align}
with $x$ taking values in the Euclidean space $\R^n$ and the input $d \in \R^m$ for some positive integers $n,m$. The mappings $f:\R^n \to \R^n$, $g:\R^n \to \R^{n\times m}$, are Lipschitz functions of $x$, and $f(0)=0$.  
Therefore, the construction is such that for some unknown system $\dot x = \bar f(x)+g(x)u$ a stabilizing controller $u=k(x)$ has been applied. Any deviation from this stabilizing controller can be viewed as $k(x)+d$ such that in \eqref{eq:system3} $f(x) := \bar f(x) + g(x) k(x)$, with $d$ being a new disturbance input. We further assume that $d:\R_{\geq 0} \to \R^m$ takes values in the space of all Lebesgue measurable functions of time: $\|d\|_\infty := \mathrm{sup}_{t \geq 0} \{ | d(t) |\} < \infty$, which can be denoted as $d \in \mathbb{L}^m_\infty$. Here, $|\:.\:|$ is the Euclidean norm.

\newsec{Class $\classK,\classK_\infty$ and $\classKL$ functions.} 
A class $\classK$ function is a function $\alpha:[0,a) \to \R_{\geq 0}$, $a>0$, which is continuous, strictly increasing and satisfies $\alpha(0) = 0$. 
A class $\classK_\infty$ function is a function $\alpha:\R_{\geq 0} \to \R_{\geq 0}$ which is continuous, strictly increasing, proper, and satisfies $\alpha(0) = 0$, 
and a class $\classKL$ function is a function $\beta:\R_{\geq 0} \times \R_{\geq 0} \to \R_{\geq 0}$ such that ${\beta(r, t) \in \classK_\infty}$ for each $t$ and $\beta(r, t) \to 0$ as $t \to \infty$.

%


 We can now define ISS for \eqref{eq:system3}. It is important to note that ISS and related definitions are always w.r.t. the input disturbance $d$. Let $x_0 \in \R^n$ be the initial condition, and let $\phi_t(x_0,d)$ be the solution to the closed loop dynamics of \eqref{eq:system3}: $\dot{\phi}_t(x_0,d) = f(\phi_t(x_0,d)) + g(\phi_t(x_0,d)) d$.  
 We have the following definition of ISS \cite{sontag1989smooth}.
\gap
\begin{definition}\label{def:ISS}{\it
 The system \eqref{eq:system3} is input to state stable (ISS) if there exists $\beta \in \classKL$, and $\iota \in \classK_\infty$ such that
 \begin{align}\label{eq:ISSmaindefinition}
  {|\phi_t(x_0,d)|} \leq \beta(|x_0|,t) + \iota(\|d\|_\infty), & \hspace{10mm} \forall x_0,d, \forall t \geq 0.
 \end{align}
}
\end{definition}
\gap

\begin{definition}{\it
 The system \eqref{eq:system3} is exponential input to state stable (e-ISS) if there exists $\beta \in \classKL$, $\iota \in \classK_\infty$ and a positive constant $\lambda > 0$ such that
 \begin{align}\label{eq:ISSmainexpodefinition}
  {|\phi_t(x_0,d)|} \leq \beta(|x_0|,t) e^{-\lambda t} + \iota(\|d\|_\infty), & \hspace{5mm} \forall x_0,d, \forall t \geq 0.
 \end{align}
}
\end{definition}
\gap

\newsec{Input to state stable Lyapunov functions.} A direct consequence of using ISS concepts is the construction of input to state stable Lyapunov functions (ISS-Lyapunov functions). 

\gap
\begin{definition}\label{def:Lyapunovdefinition}{\it
A smooth function $V:\R^n \to \R_{\geq 0}$ is an ISS-Lyapunov function for \eqref{eq:system3}  if there exist functions $\ubar{\alpha}$, $\bar \alpha$, $\alpha$, $\iota \in \mathcal{K}_\infty$ such that $\forall x,d$
\begin{align}
\label{eq:ISSd}
&\ubar{\alpha}(|x|) \leq V (x)  \leq \bar \alpha (|x|)  \nonumber \\
&\dot{V}(x,d)  \leq - \alpha(|x|) + \iota (\|d\|_\infty). 
\end{align}}
\end{definition}\gap
The inequality condition can be made stricter by using the exponential estimate:
\begin{align}
\label{eq:ISSdstricter}
\dot{V}(x,d)  \leq - c V(x) + \iota (\|d\|_\infty),
\end{align}
which is then called the e-ISS-Lyapunov function. There are also alternate forms like
\begin{align}
\label{eq:ISSdstricteralternate}
\dot{V}(x,d)  \leq - c V(x)  \quad \mathrm{for} \quad |x| \geq \iota (\|d\|_\infty), 
\end{align}
which is also an e-ISS-Lyapunov function.

Instead of the states, if we are interested in the behavior of the outputs of the form $y: \R^n \times \R_{\geq 0} \to \R^k$ that is a function of the states $x$ and time $t$, we have {\it input to output stability}.
\gap
\begin{definition}
 The system \eqref{eq:system3} is input ($d$) to output ($y$) stable (IOS) if there exists $\beta \in \classKL$, and $\iota \in \classK_\infty$ such that
 \begin{align}\label{eq:IOSmaindefinition}
  {|y(\phi_t(x_0,d),t)|} \leq \beta(|x_0|,t) + \iota(\|d\|_\infty), & \hspace{3mm} \forall x_0,d, \forall t \geq 0.
 \end{align} 
\end{definition}
\gap





\newsec{Uncertainty vs. deviation from $k(x)$.} It is important to note that a wide variety of uncertainties can be classified as the deviation from the stabilizing control input $d$. Model parameter uncertainty and phase-based uncertainty were classified in this manner in \cite{kolathaya2016parameter,kolathaya2016time} respectively. Other uncertainties include actuator saturations, unmodeled dynamics appearing in the controllable space, and also noisy sensor feedback. In this paper, we will specifically show how a PD control law can address model uncertainty, and a time based parameterization can address phase uncertainty by viewing them as the input disturbance (deviation) $d$. 

%

\section{Input to state stability of PD controlled robotic systems}\label{sec:issrobot}
In this section, we will describe how to realize linear feedback laws, such as PD control, that render a robotic system ISS. We will study here a robotic system consisting of $n$-DOF and the corresponding configuration space $\Q\subset\R^n$. The configuration $q\in\Q$ consists of $n_j$ joint angles, $n_b$ base DOFs, $n_s$ spring and also $n_s$ dampers. Therefore, we denote the state $x:=(q,\dq)\in T\Q \subset\R^{2n}$, where $n=n_j+n_b+n_s$. We will denote the torque input $u$, which is of dimension $m$. In addition to the torque input, we also have $n_h$ holonomic constraint forces $\Lambda \in \R^{n_h}$ acting at various points on the robot (in DURUS, foot contacts with ground act as holonomic constraints).

\subsection{Dynamics}
Given the states, inputs, and holonomic constraints, the Euler-Lagrangian dynamics is given by:
\begin{align}
  \label{eq:eom-general} 
  D(\q) \ddq + H(\q,\dq) - B u - J_h^T(\q) \Lambda &=0 \nonumber \\
  J_h(\q) \ddq + \dot{J}_h(\q,\dq)\dq &= 0,
\end{align}
where $D(q) \in \R^{n\times n}$ is the positive definite inertia matrix, $H (\q,\dq) \in \R^n$ is the Coriolis-centrifugal-gravity vector, $B\in \R^{n\times m}$ is the one-on-one mapping of the torques to the joints, and $J_h(q)\in\R^{n_h\times n}$ is the Jacobian of the holonomic constraints. We have the following properties of the model (assuming all revolute joints \cite{ROB:ROB2}).
\gap
\begin{property}\label{prop:1}
For some $\ubar{c}_d, \bar c_d > 0$, and $c_c > 0$
\begin{align}
\label{eq:constantscd}
\underbar c_d \leq \|D(\q)\| \leq \bar c_d, \quad \|H(\q,\dq)\| \leq c_c ( 1 + |\dq|^2 ).
\end{align}\end{property}
\gap
\begin{property}\label{prop:2}
 For some $c_h , \ubar{c}_h, \bar c_h >0$
  \begin{align}\label{eq:boundsholonomicconstraints}
  & \|J_h(\q) \| \leq c_h,\quad \|\dot J_h(\q,\dq) \| \leq c_h |\dq|, \\
  & \ubar{c}_h \leq \|J_h(\q) D^{-1}(\q) J_h^T(\q)\| \leq \bar c_h. \nonumber
  \end{align}
%
\end{property}\gap
Here $\|.\|$ denotes the matrix norm, and the matrix $J_h D^{-1} J_h^T$ is invertible by construction\footnote{Here, and throughout much of the paper, the arguments for matrices associated with the dynamics will be suppressed for notational convenience.}. The holonomic constraint forces can be explicitly computed as (see \cite[eqn. (30)]{ames2013towards})
\begin{align}
 \label{eq:holonomicconstraintforces}
  &\Lambda  = - \Lambda_1 B u +\Lambda_1 H - \Lambda_2 \\
 &\Lambda_1  = ( J_h D^{-1} J_h^T)^{-1}  J_h D^{-1}, \quad  \Lambda_2  = ( J_h D^{-1} J_h^T)^{-1} \dot J_h \dq. \nonumber
\end{align}
Thus \eqref{eq:holonomicconstraintforces} can be substituted in the dynamics to yield 
\begin{align}
 \label{eq:dynamicscombined}
  D \ddq + (\1 - J_h^T \Lambda_1) H + J_h^T\Lambda_2 = (\1 -J_h^T\Lambda_1) B u,
\end{align}
where $\1$ is an identity matrix of appropriate dimension. \eqref{eq:dynamicscombined} can be represented in statespace
statespace form 
\begin{align}
\label{eq:dynamicsmech}
\left [ \begin{array}{c} \dq \\ \ddq \end{array} \right ] &= f (q,\dq) + g (q,\dq) u, 
\end{align}
which is similar to \eqref{eq:system3}. 
\subsection{PD Control}
There has been work on proving the input to state stability, specifically integral input to state stability (iISS), of these kinds of robotic systems \cite{angeli1999input,7844077}. We will use some of these ideas to realize controllers for walking. 

If we assume full actuation ($B$ to be square and full rank, $n=m$), we can choose the following particular control law (linear feedback law):
\begin{align}
 \label{eq:PDISS}
u = -K_p (\q - \q_d) - K_d (\dq - \dq_d), 
\end{align}
where $K_p, K_d$ are constant gain matrices of appropriate dimension ($n$ for $B$ full rank), and $q_d,\dq_d$ are desired joint angles and velocities. These desired values are either constants or functions of states and time: $\q_d : \Q \times \R_{\geq 0} \to \Q$, and $\dot \q_d : T_q\Q \times \R_{\geq 0} \to T_q\Q$. Note that $q_d$, $\dq_d$ can also be pure functions of time, and are particularly used for the walking control of DURUS (more details in Section \ref{subsec:timebasedoutputdynamics}. 
One way of characterizing ISS is shown via the following example.
\gap
\begin{example}
 By substituting \eqref{eq:PDISS} in \eqref{eq:dynamicscombined}:
\begin{align}
 &D (\ddq - \ddq_d)  = (\1 - J_h^T \Lambda_1)(- B K_p (\q - \q_d) - B K_d (\dq - \dq_d)  ) + d  \nonumber\\
 &{\rm{where}}\quad d = - (\1 - J_h^T \Lambda_1) H - J_h^T\Lambda_2 - D \ddq_d ,
\end{align}
where a part of the model itself is regarded as the disturbance. Denote $y(\q,t):=\q - \q_d(t) $. Therefore
\begin{align}
\label{eq:outputsdynamicsexample1}
  \ddot y  - D^{-1}(\1 - J_h^T \Lambda_1) B K_p \dot y - D^{-1}(\1 - J_h^T \Lambda_1) B K_d y = D^{-1}d.
\end{align}
Since the inertia matrix $D$ is positive definite, the signs of the errors $y,\dot y$ do not change. \eqref{eq:outputsdynamicsexample1} can be written in statespace form:
\begin{align}
\begin{bmatrix}  \dot y \\ \ddot y
 \end{bmatrix} &= \begin{bmatrix} 0 & \1 \\  D^{-1}(\1 - J_h^T \Lambda_1) B K_p &  D^{-1}(\1 - J_h^T \Lambda_1) B K_d \end{bmatrix} \begin{bmatrix}
		y \\ \dot y \end{bmatrix} \nonumber \\
		&\qquad																+ \begin{bmatrix}
																					    \0 \\ D^{-1} \end{bmatrix}d,
\end{align}
which is of the form \eqref{eq:system3}. We can establish ISS by tuning the gains $K_p$, $K_d$ appropriately. This type of characterization increases the gap between the assumed and the actual robot model, but can yield smaller tracking errors based on the nature of $q_d$, and the gains $K_p,K_d$. 
\end{example}
\gap

When $B$ is not full rank ($m<n$), we can generalize this tracking problem in terms of the difference between $k=m$ actual and desired values: $y (\q) = y_a(\q) - y_d (\q) \in \R^m$. With the desired outputs, we can obtained the desired configuration and velocities of the robot via an inverse mapping (for example $\Phi^{-1}: \R^{n-m}\times \R^m \to \Q$) to obtain
\begin{align}
	\q_d(q) &= \Phi^{-1}(q_C,y_d(q)), \nonumber \\
	 \dq_d(q,\dq) &= d\Phi^{-1}(q_C,y_d(q)) \dq,
\end{align}
where $q_C$ is the configuration of unactuated joints of the robot. In the context of bipedal walking, this is formally called PHZD reconstruction, and is explained more in Section \ref{sec:PHZDrecons}. The following example illustrates how PD controllers can be realized for these types of outputs.\gap
\begin{example}
We have the following output dynamics:
\begin{align}\label{eq:outputdynamics}
 \dot y &= L_f y \nonumber \\
 \ddot y &= L^2_f y + L_g L_f y u 
\end{align}
where
\begin{align}
 L_f y(q,\dq) &= \dot y_a(\q,\dq) - \dot y_d(\q,\dq) \nonumber \\
 L_f^2 y(q,\dq)  &= \left [ \frac{\partial L_f y (\q,\dq)}{\partial \q} \:\: \frac{\partial L_f y(\q,\dq)}{\partial \dq} \right ] f(\q,\dq) \nonumber \\
 L_g L_f y(q,\dq)  &=  \left [ \frac{\partial L_f y(\q,\dq)}{\partial \q} \:\: \frac{\partial L_f y(\q,\dq)}{\partial \dq} \right ] g(\q,\dq),
\end{align}
where $L_f,L_g$ are the Lie derivatives. 
Substituting a PD control law similar to \eqref{eq:PDISS} we have
\begin{align}\label{eq:outputdynamics2}
 \ddot y &= L^2_f y + L_g L_f y u \nonumber \\
&= L^2_f y + L_g L_f y B^T ( -K_p (q - \q_d) - K_d (\dq - \dq_d ) ),
\end{align}
where $B^T$ is used to match the dimensions.
By appropriately tuning $K_p, K_d$ and through the selection of $y_d$, it is possible to realize input to output stability (IOS) of this system. Since $m<n$, the output dynamics do not fully represent the full dynamics of the robot. Therefore, for underactuated systems, ISS is achieved when both the output dynamics and the passive (``unactuated'' states) dynamics are included in the analysis (more on this in Section \ref{sec:theory}). 
\end{example}
\gap

 Asymptotic stability (and even exponential stability) of PD control of continuous robotic systems has been extensively studied in literature \cite{1643376,4790013,koditschek-the_robotics_review-1989,371478,doi:10.5772/61537}. Similarly, ISS of PD control of robotic systems has been studied in \cite{angeli1999input}, and there are several forms of the disturbance dynamics that can be viewed and analyzed\footnote{A result of particular importance is e-ISS of PD tracking of robotic systems, for which a sketch of the proof is provided in Appendix \ref{app:eisspdcontrol}. Also see \cite{wen1988new} for a detailed survey on PD based controllers in robotic systems.}.
 In this paper, we will consider a more traditional form of disturbance $d$ obtained from \cite{Sontag89furtherfacts}, i.e., deviation from a stabilizing control input. For the output dynamics of the form \eqref{eq:outputdynamics}, we know that a suitable controller that yields stability is a feedback linearizing controller:
 \begin{equation}
 	u_{\rm{IO}} = L_{g} L_{f} y ^{-1} \left ( - L_{f}^2 y - 2\epsilon L_f y - \epsilon^2 y \right ), \quad \epsilon > 0, \label{eq:fblin}
 \end{equation}
which when added and subtracted in \eqref{eq:outputdynamics} yields
 \begin{align}\label{eq:dynamicsreference}
  \ddot y &= L^2_f y + L_g L_f y  ( u  + u_{\rm{IO}} - u_{\rm{IO}})  \\
          &= - 2\epsilon \dot y - \epsilon^2 y  + L_g L_f y  \underbrace{( u  - u_{\rm{IO}} )}_{=:d}. \nonumber
 \end{align}
 The disturbance input $d$, in this context, is effectively the deviation from the feedback linearizing controller \eqref{eq:fblin}. 

\begin{figure}[t!]\centering
\includegraphics[width=0.42\columnwidth]{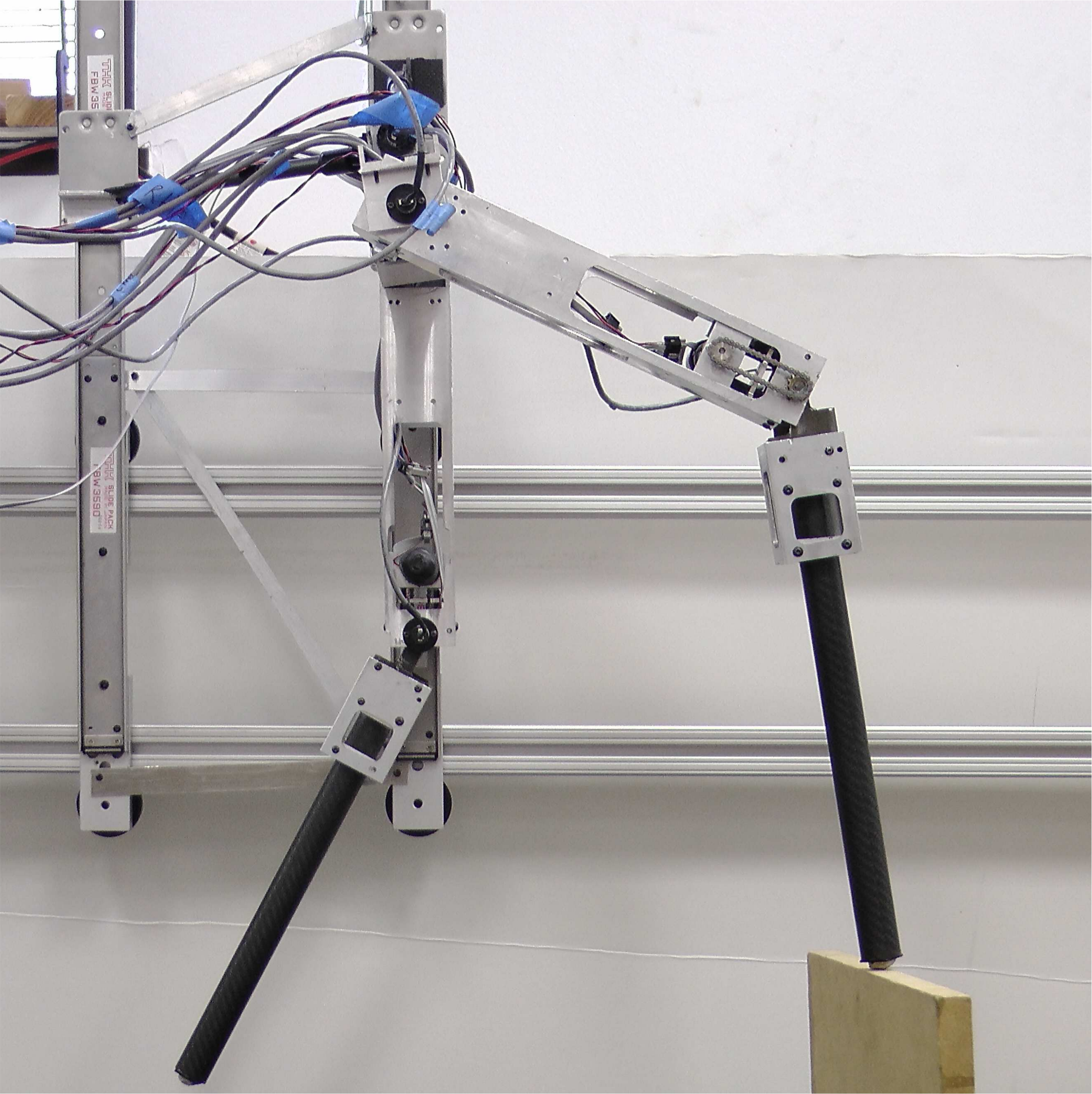}
\includegraphics[width=0.42\columnwidth]{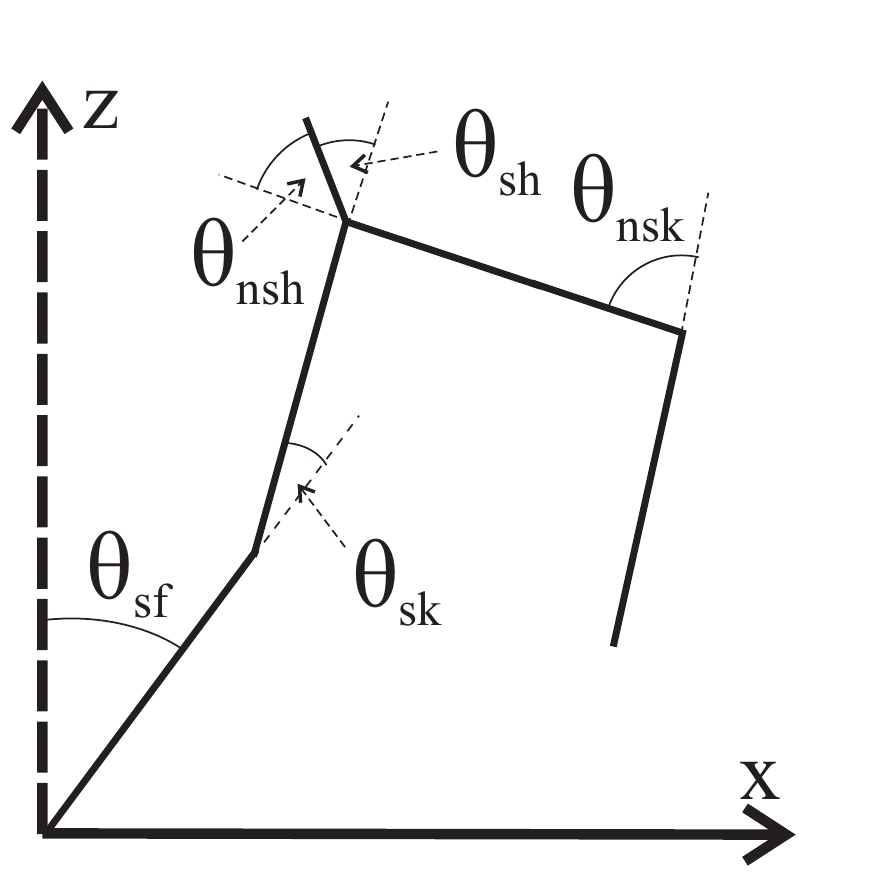}
\caption{AMBER is shown on the left, with the configuration of the 5 joints shown on the right.}
 \label{fig:amber}
\end{figure}
\gap
\begin{remark}
The two controllers \eqref{eq:PDISS}, \eqref{eq:fblin} are indeed very different, and the resulting deviation could be large. Since the focus is on realizing controllers that yield a low $d$ to $y$ gain, this type of characterization is acceptable as long as the deviation remains bounded. 
To illustrate, we will take into consideration a fully actuated 5-DOF serial chain manipulator, AMBER\footnote{AMBER was originally designed for walking on a treadmill \cite{Yadukumar2013}.}, as shown in \figref{fig:amber}, and drive the joint configuration $q=(\theta_{sf},\theta_{sk},\theta_{sh},\theta_{nsh},\theta_{nsk})\in \R^5$ to a desired configuration by applying a linear feedback law \eqref{eq:PDISS}. We have the set of actual outputs $y_a(q) := q $, and the desired outputs $y_d(q) := 0$, i.e., the goal is to drive $y(q) =  q$ to zero from an arbitrary configuration. Applying the linear feedback law \eqref{eq:PDISS}, indeed, results in convergence of the joint angles to zero (as shown in \figref{fig:amber1result}). Note that the immediate choice for stabilization would have been \eqref{eq:fblin}, but instead, a model-free controller, \eqref{eq:PDISS}, was applied resulting in low tracking errors ($< 0.01$ $\rm{rad}$).
Explicit computation of the upper bounds on the tracking errors as a function of $d$ can be obtained from the following Lemma.
\end{remark}

\begin{figure}[t!]
\includegraphics[width=0.49\columnwidth]{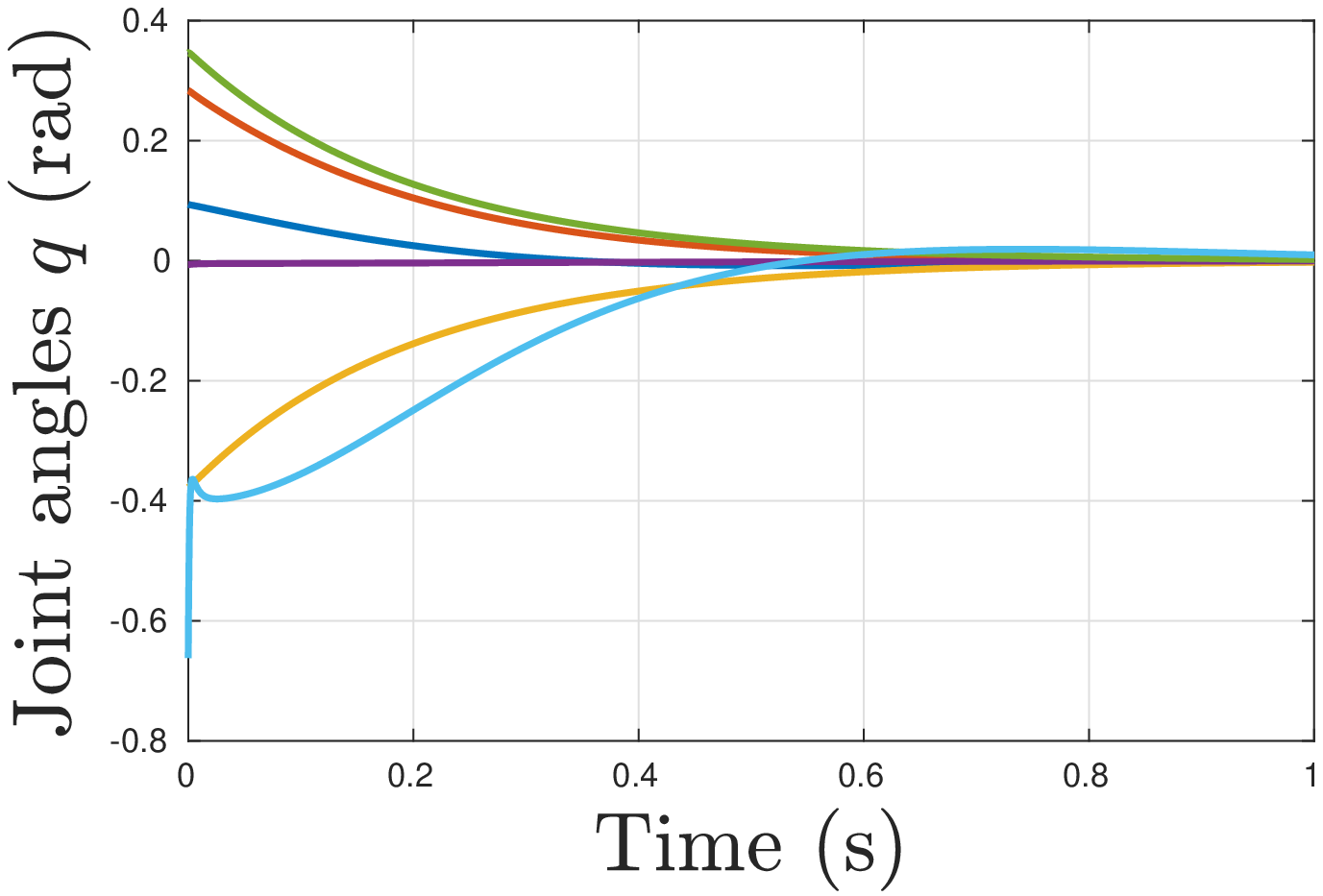} 
\includegraphics[width=0.49\columnwidth]{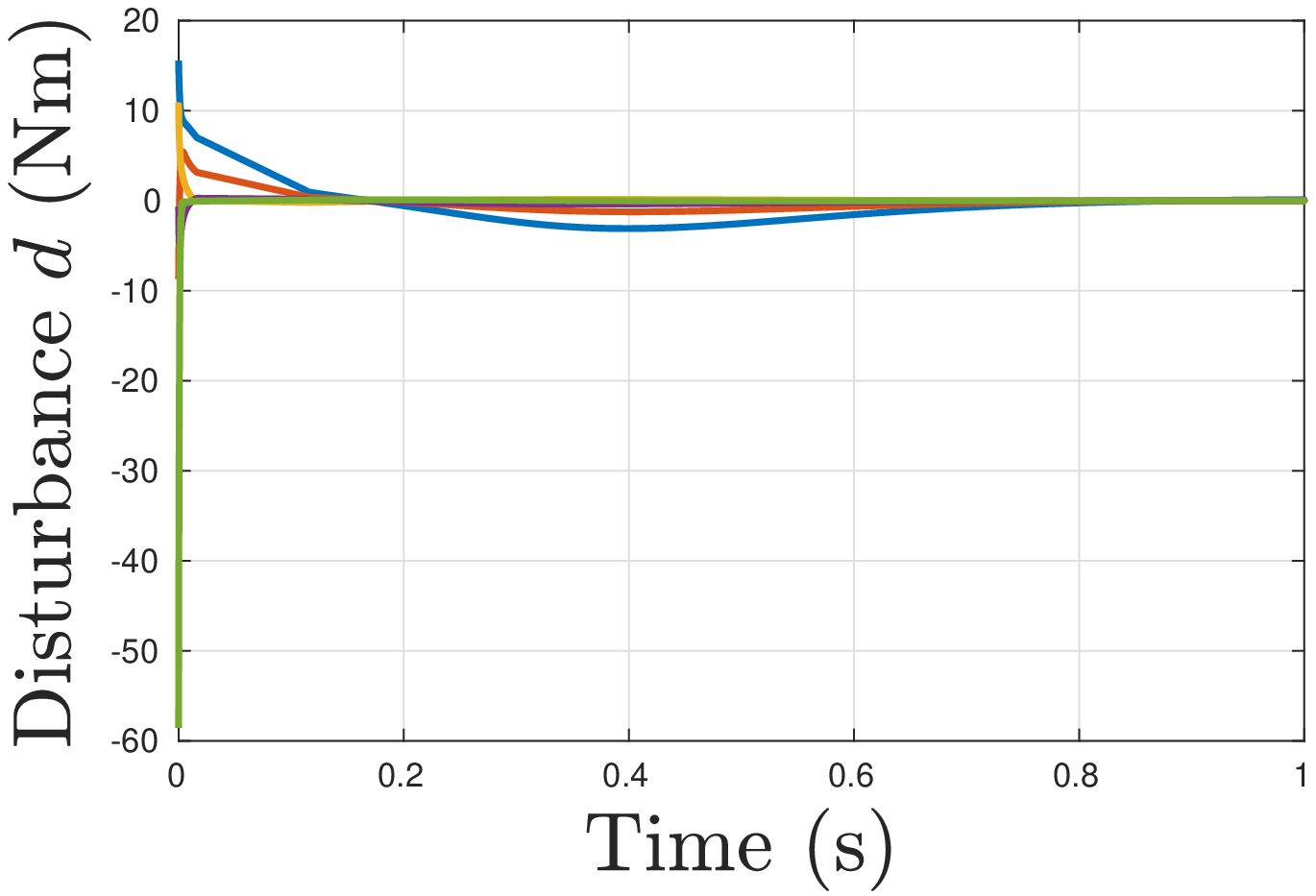} 
\caption{The tracking errors of the 5 joints are shown on the left, with the input deviations on the right. It can be observed that for input deviations as large as $ 60$ $ \rm{Nm}$, the tracking error is bounded. In this example, $d$ not only remained bounded, but also reduced over time. This behavior is, of course, not always true and requires manual tuning of the gains and output parameters of the system.}
 \label{fig:amber1result}
\end{figure}

\gap 
\begin{lemma}{\it
 \label{lm:eissofcontinuous}
 The transverse dynamics of the form \eqref{eq:dynamicsreference} is exponential $d$ to $y$ stable. }
\end{lemma}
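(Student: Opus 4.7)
The plan is to build an e-ISS-Lyapunov function in the sense of \eqref{eq:ISSdstricter} directly on the transverse coordinates $\eta := (y,\dot y)^T$. First, I would rewrite \eqref{eq:dynamicsreference} as a first order linear (in $\eta$) system with a state-dependent input channel:
\begin{align*}
\dot\eta \;=\; F\eta + G(q,\dq)\,d, \qquad
F \;=\; \begin{bmatrix} 0 & \1 \\ -\epsilon^2\1 & -2\epsilon\1 \end{bmatrix}, \quad
G(q,\dq) \;=\; \begin{bmatrix} \0 \\ L_g L_f y \end{bmatrix}.
\end{align*}
The key observation is that per coordinate the drift has characteristic polynomial $s^2+2\epsilon s + \epsilon^2=(s+\epsilon)^2$, so $F$ is Hurwitz with eigenvalues at $-\epsilon$. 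Hence the Lyapunov equation $F^T P + P F = -Q$ admits a symmetric positive definite solution $P$ for any symmetric $Q\succ 0$; in particular $P$ can be chosen block-structured and depending polynomially on $1/\epsilon$.

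Next I would set $V(\eta) := \eta^T P \eta$, so that $\lambda_{\min}(P)|\eta|^2 \le V(\eta) \le \lambda_{\max}(P)|\eta|^2$, and compute
\begin{align*}
\dot V \;=\; -\eta^T Q \eta + 2\,\eta^T P\,G(q,\dq)\,d.
\end{align*}
Because the walking states remain in a compact neighborhood of the periodic orbit and the inertia matrix satisfies Property \ref{prop:1}, the decoupling matrix $L_g L_f y$ is uniformly bounded, say $\|G(q,\dq)\|\le c_G$. Applying Young's inequality to the cross term with a parameter $\mu>0$ gives
\begin{align*}
\dot V \;\le\; -(1-\mu)\,\eta^T Q \eta \;+\; \tfrac{1}{\mu}\, c_G^2\, \lambda_{\max}(P)^2\, |d|^2,
\end{align*}
and bounding $\eta^T Q \eta \ge \lambda_{\min}(Q)|\eta|^2 \ge (\lambda_{\min}(Q)/\lambda_{\max}(P))\,V(\eta)$ yields
\begin{align*}
\dot V \;\le\; -c\,V(\eta) + \iota(\|d\|_\infty),
\end{align*}
with an explicit $c>0$ and $\iota\in\classK_\infty$ (in fact quadratic). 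This is exactly the form \eqref{eq:ISSdstricter} defining an e-ISS-Lyapunov function, which via the standard comparison lemma yields an exponential estimate on $V$ and therefore on $|\eta|=|(y,\dot y)|$, establishing exponential $d$-to-$y$ stability as required by Definition of e-ISS, specialized to the output $y$.

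The routine portion is the Lyapunov/Young's inequality manipulation; the step that deserves care, and which I expect to be the main obstacle, is justifying the uniform bound $\|L_g L_f y\|\le c_G$. For the fully actuated case this follows from Properties \ref{prop:1}--\ref{prop:2} on a bounded set of configurations, but in the underactuated setting used later for DURUS one must argue that trajectories remain in a neighborhood of the orbit on which this bound holds, which is what allows the local e-ISS conclusion to be lifted to the hybrid periodic orbit in Section \ref{sec:theory}.
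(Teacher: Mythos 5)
Your proposal is correct and follows essentially the same route as the paper: the same statespace form $\dot\eta_2 = A_2\eta_2 + B_2 d$ with $A_2$ Hurwitz, the same quadratic Lyapunov function from the Lyapunov equation, and the same reliance on boundedness of $B_2 = [\,0;\ L_gL_f y\,]$ (which the paper bounds explicitly via Properties \ref{prop:1}--\ref{prop:2} and $c_y = \sup_q\|\partial y/\partial q\|$, later codified as Assumption \ref{assumption:1}). The only cosmetic difference is that you close the estimate with Young's inequality to reach the form \eqref{eq:ISSdstricter}, whereas the paper uses the equivalent gain-margin form \eqref{eq:ISSdstricteralternate}.
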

\gap
\begin{proof}
The outputs $y$ chosen are relative degree two (twice differentiable to yield the control input). Therefore denote: $\eta_2 := [y^T,\dot y^T]^T$. 
 \eqref{eq:dynamicsreference} can be written in statespace form as
 \begin{align}\label{eq:eta2dynamicsdescription}
\dot \eta_2 = \underbrace{  \begin{bmatrix}
                               \0 & \1 \\
                               - \epsilon^2 \1 & - 2 \epsilon \1
                              \end{bmatrix}  }_{A_2} \eta_2 + \underbrace{\begin{bmatrix}
				    0\\
				    L_{g} L_{f} y
				    \end{bmatrix}}_{B_2}  d,
\end{align}
where $A_2$ is Hurwitz (see \cite[eqn. (11)]{TAC:amesCLF}), and $B_2(\q)$ depends on $\q$ only. $\1$ is an identity matrix of appropriate dimension. This can be viewed in terms of the following candidate ISS-Lyapunov function:
 \begin{align}
  V_{\eta_2} (\eta_2) : = \eta_2^T P_2 \eta_2,  
 \end{align}
 where $P_2$ is the solution to the Lyapunov equation $A^T_2 P_2 + P_2 A_2 = - Q_2$, $Q_2 > 0$. The subscript indicates that the constants are defined for the relative degree two outputs. The derivative of $V_{\eta_2}$ yields 
 \begin{align}\label{eq:lyapunovmainderivative}
  \dot V_{\eta_2} &= \eta_2^T (A^T_2 P_2 + P_2 A_2 ) \eta_2 + 2 \eta^T_2 P_2 B_2 d  \nonumber \\
	      &= - \eta_2^T Q_2 \eta_2  + 2 \eta^T_2 P_2 B_2 d .
\end{align}
Let $\gamma_1 , \gamma_2 > 0$ such that $( \gamma_1 + \gamma_2 ) P_2 \leq Q_2$ (see \cite[eqn. (46)]{kolathaya2016parameter}). We therefore have
\begin{align}\label{eq:lyapunovmainderivative2}
\dot V_{\eta_2} \leq - \gamma_1 V_{\eta_2} \quad {\rm{for}}   \quad \gamma_2 V_{\eta_2} \geq   2  \eta^T_2  P_2 B_2 d,
\end{align}
 which implies that $V_{\eta_2}$ is decreasing exponentially:
 \begin{align}\label{eq:etaboundmaadi}
\dot V_{\eta_2} \leq - \gamma_1 V_{\eta_2} \quad {\rm{for}} \quad |\eta_{2} | \geq \frac{2 \lambda_{\max}(P_2)}{\gamma_2 \lambda_{\min}(P_2)} \|B_2  \| \|d\|_\infty,
 \end{align}
 which is of the form \eqref{eq:ISSdstricteralternate}. $\lambda_{\min}(.)$, $\lambda_{\max}(.)$ provide the minimum and maximum eigenvalues of the positive definite matrix $P_2$. $\|B_2\|$ is the matrix norm, which has an upper bound:
 \begin{align}
  \|B_2\|  &= \|L_g L_f y \| = \left \|\frac{\partial y }{\partial q} D^{-1} (\1 - J_h^T\Lambda_1 ) B \right \| \nonumber \\
	  &\leq c_y \underbar c_d^{-1} ( 1 + c_h^2 \underbar c_h^{-1}),
 \end{align}
 where the constants $c_d, c_h, \underbar c_h$ are obtained from \eqref{eq:constantscd}, \eqref{eq:boundsholonomicconstraints}, and $c_y = \sup_{q}\|\frac{\partial y }{\partial q}\|$. 
%
\end{proof}
\gap
Boundedness of $B_2$ is established due to the fact that the control system is affine \eqref{eq:system3}, and both $\|\frac{\partial y }{\partial q}\|$, $g(x)$ are bounded. Note that the Jacobian of the outputs is bounded by construction. We will treat the boundedness of $B_2$ as an assumption for future reference.
\gap
\begin{assumption}\label{assumption:1}
 The output dynamics of the form \eqref{eq:outputdynamics} is globally Lipschitz w.r.t. the control input $u$ (and correspondingly the disturbance input $d$).
\end{assumption}
\gap
\begin{remark}
The input deviation $d$ can be expanded as follows:
\begin{align}\label{eq:disturbancereference}
 | d | & = |- B^T K_p (q - \q_d) - B^T K_d (\dq - \dq_d)  - u_{\rm{IO}}|   \\
    & \leq |  - B^T K_p (q - \q_d) - B^T K_d (\dq - \dq_d) \nonumber \\
     & \qquad \qquad + L_g L_f y^{-1} (\epsilon^2 y + 2\epsilon \dot y ) | + |L_g L_f y^{-1} L_f^2 y | .\nonumber 
\end{align}
As mentioned previously, the input disturbance $d$ is dependent on the states of the system, and the ISS property will not hold when the outputs $y$, $\dot y$ get larger in time. On the other hand, if these values evolve such that the resulting deviation $d$ is bounded\footnote{Boundedness of the states (and hence of the outputs) for continuous robotic systems is well known \cite[Table $1$]{wen1988new}.}, the resulting output dynamics are rendered ISS. We make the following assumption:
\gap
\begin{assumption}
 The disturbance effects can be minimized by manual tuning of the gains $K_p$, $K_d$. In other words, for every $\delta>0$, there exist constants $K_p$, $K_d$ such that $\|d\|_\infty \leq \delta$, where $d$ is defined as in \eqref{eq:disturbancereference}.
\end{assumption}
\gap
This is evident for DURUS from \figref{fig:resultssimexp} and \figref{fig:resultsdist}, where for input deviations as large as $100$ Nm, the tracking errors are observed to not exceed $0.05-0.06$ rad. To justify these results on DURUS, it is necessary to extend the concepts presented in this section to hybrid systems.
\end{remark}

\section{Robot Walking Model and Control}\label{sec:hs}

In this section, we will discuss the hybrid model of DURUS walking. DURUS is an underactuated $23$-DOF bipedal robot designed by collaboratively between SRI International, Dynamic Robotics Lab and AMBER Lab (see \figref{fig:durusside}) with $15$ actuators and $2$ springs. Different behaviors including multi-contact walking \cite{reheralgorithmic} were successfully realized in DURUS. In this paper, we will focus on flat-footed walking only. More complex behaviors will be studied in future.

\subsection{Hybrid walking model} The DURUS walking model has two continuous events, double support ($\dos$) and single support ($\sis$), and two discrete events, lift-off and foot-strike, that alternate between each other. We, therefore, have a directed graph, $\DirectedGraph = (\Vertex,\Edge)$, with the set of vertices, $\Vertex = \{ \dos, \sis \}$, representing the continuous events and the set of edges, $ \Edge = \{ (\dos,\sis), (\sis,\dos) \} \subset \Vertex \times \Vertex$, representing the discrete events. A pictorial representation of these individual events and the switch between them are shown in \figref{fig:hs}. For the single support phase, the stance foot interacts with the ground resulting in ground reaction forces acting on the stance leg. Similarly, for the double support phase, both the feet interact with ground resulting in the forces acting on both legs. 
These forces are enforced via holonomic constraints. The continuous dynamics is represented by \eqref{eq:dynamicscombined} with the forces $\Lambda_\vi$ and Jacobians $J_\vi$ now dependent on the phase $\vi\in\Vertex$.

 \begin{figure}
\centering
	\includegraphics[width=0.7\columnwidth]{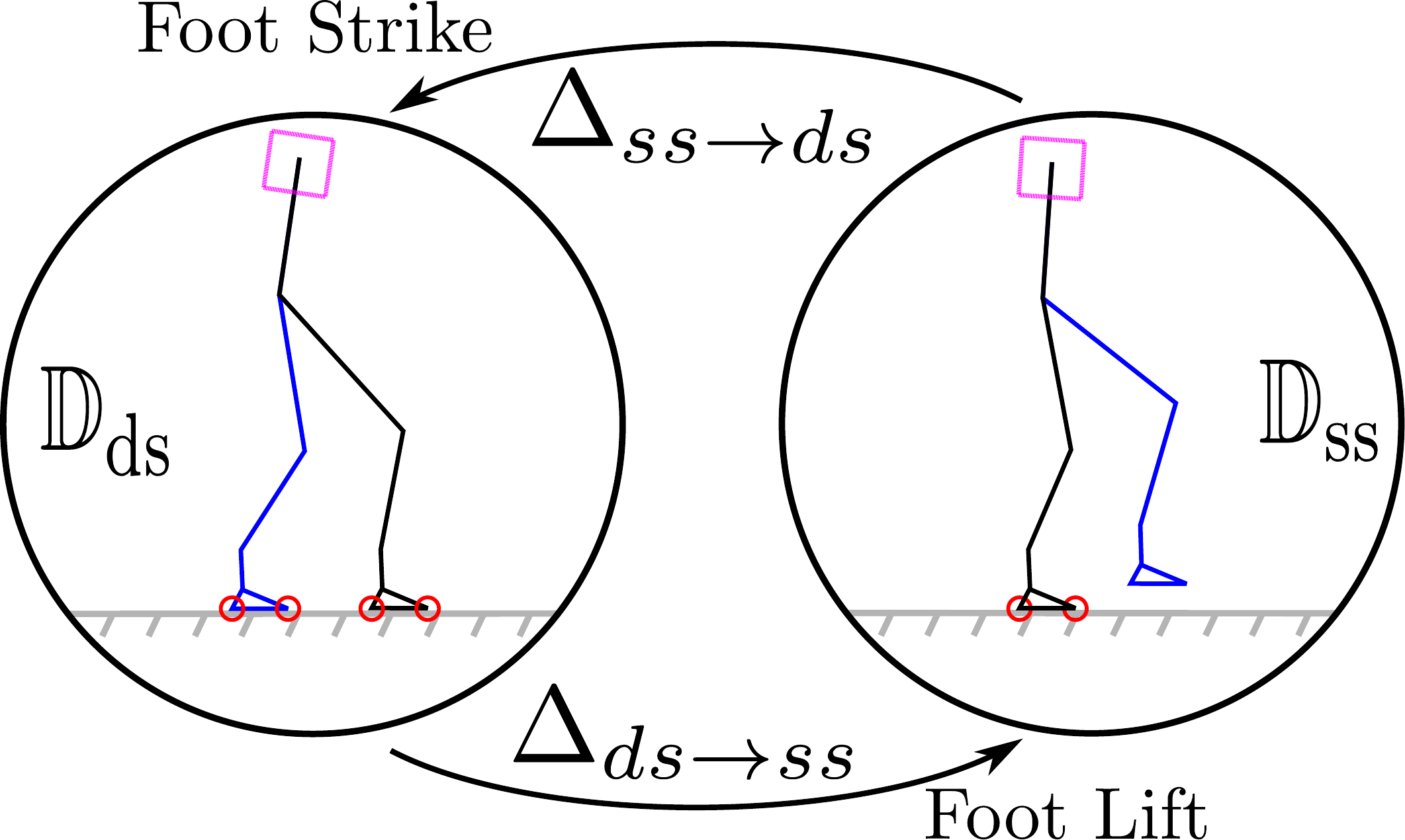}
	\caption{Hybrid system model for the walking robot DURUS.}
	\label{fig:hs}
\end{figure}

\newsec{Hybrid control system.}
 \label{def:genhs}
The hybrid control system model of DURUS consists of a directed cycle: $\DirectedGraph = (\Vertex,  \Edge)$, the set of inputs $\ControlInput=\{\ControlInput_{\dos}, \ControlInput_{\sis}\} $, the set of domains $\Domain=\{\Domain_{\dos}, \Domain_{\sis} \}$, the set of guards $\Guard = \{ \Guard_{\dos},  \Guard_{\sis} \}$, the set of switching functions $\ResetMap=\{ \ResetMap_{(\dos,\sis)},  \ResetMap_{(\sis,\dos)} \}$, and the set of fields $\mathbb{FG}=\{(f_{\dos},g_{\dos}), (f_{\sis},g_{\sis}) \}$. These form a tuple:
\begin{align}
  \label{eq:hybrid-control-system}
  \HybridControlSystem = (\DirectedGraph, \ControlInput, \Domain, \Guard, \ResetMap, \mathbb{FG}).
\end{align}
Note that $\ControlInput_\vi \subset \R^m$, $\Domain_\vi \subset T\Q \times \ControlInput_\vi$, for $\vi\in\Vertex$. For each domain, we have a guard set of co-dimension $1$ $\rm{:}$ $\Guard_{\dos} \subset \Domain_{\dos}$,  $\Guard_{\sis} \subset \Domain_{\sis}$. Denote the projection of the domain and guard sets to the states (only) as $\Guard_\vi |_x, \Domain_\vi |_x $ respectively. $\ResetMap$ consists of the set of switching functions that maps from the guard of one domain to the next domain. For example, $\ResetMap_{(\dos,\sis)}:\Guard_{\dos} |_x  \to \Domain_{{\sis}} |_x$, where $\Guard_{\dos} |_x \subset \Domain_{\dos}|_x$. 
$\ResetMap_{(\dos,\sis)}$ is an identity map applied when the foot leaves the ground to enter the single support phase, while 
$\ResetMap_{(\sis ,\dos)}$
contains the impact equations occurring when the foot strikes at the end of the step. $(f_\vi,g_\vi)$ are given by \eqref{eq:dynamicscombined} and \eqref{eq:dynamicsmech}. See \cite{WGCCM07} for more details.

\subsection{Outputs for walking}
\label{sec:control}

The goal of this section is derive controllers that realize a walking gait in the bipedal robot. 
In order to achieve robotic walking, a periodic orbit is constructed (gait design) and a suitable controller is applied that tracks this reference periodic orbit. 
We have the set of actual outputs of the robot as $y^a:T\ConfigSpace \to \mathbb{R}^k$, and the desired outputs as $y^d :\R_{\geq 0} \to \mathbb{R}^k$. $y^d$ is parameterized by a phase (or time) variable $\tau:T\ConfigSpace \to \R_{\geq 0}$ (or $\tau:\R_{\geq 0} \to \R_{\geq 0}$ for time based). These outputs and their dimensions depend on the domain subscript $\vi\in\Vertex$. We have the relative degree one outputs (velocity outputs)
\begin{align}\label{eq:d1o}
y_{1,\vi} (q,\dq) = y^a_{1,\vi}(q,\dq) - y^d_{1,\vi}(\alpha_\vi) \in \R^{k_{1,\vi}},
\end{align}
and the relative degree two outputs (pose outputs)
\begin{align}
\label{eq:d2o}
y_{2,\vi} (q) = y^a_{2,\vi}(q) - y^d_{2,\vi}(\tau_\vi,\alpha_\vi) \in \R^{k_{2,\vi}},
\end{align}
with the subscript $\vi\in\Vertex$ denoting the domain, $\alpha_\vi$ denoting the parameters of the desired trajectory. ${k_{1,\vi}}+{k_{2,\vi}}=k_\vi$. These outputs are called \emph{virtual constraints} in \cite{WGCCM07}. The phase variable, $\tau_\vi$, for relative degree two outputs is typically a function of the configuration $\tau_\vi(q)$. Walking gaits, viewed as a set of desired periodic trajectories, are modulated as functions of a phase variable to eliminate the dependence on time \cite{kolathaya2016time}. 

\subsection{State based output dynamics} If the phase variable $\tau_\vi$ is state dependent ($\tau_\vi(\q)$), we have the following output dynamics:
\begin{align}
\dot y_{1,\vi} &= L_{f_\vi} y_{1,\vi}  + L_{g_\vi} y_{1,\vi} u \nonumber \\
\ddot y_{2,\vi} &= L_{f_\vi}^2 y_{2,\vi} + L_{g_\vi} L_{f_\vi} y_{2,\vi} u,
\end{align}
where $L_{f_\vi},L_{g_\vi}$ denote the Lie derivatives. These outputs are also called the transverse coordinates. Therefore, with $k_{1,\vi}$ relative degree one outputs and $k_{2,\vi}$ relative degree two outputs, we denote the transverse coordinates as
 $\eta_\vi = \begin{bmatrix} y_{1,\vi}^T , y_{2,\vi}^T,  \dot y_{2,\vi}^T \end{bmatrix}^T \in \R^{k_{1,\vi}+2k_{2,\vi}}$.
In order to track the state based outputs of the system, we can employ feedback linearization
\begin{align}
\label{eq:fblinm}
 u_{\rm{IO}} = \begin{bmatrix}L_{g_\vi} y_{1,\vi} \\  L_{g_\vi} L_{f_\vi} y_{2,\vi}\end{bmatrix} ^{-1} \left ( - \begin{bmatrix} L_{f_\vi} y_{1,\vi} \\ L_{f_\vi}^2 y_{2,\vi} \end{bmatrix} - \begin{bmatrix}  \epsilon y_{1,\vi} \\ 2\epsilon L_fy_{2,\vi} + \epsilon^2 y_{2,\vi} \end{bmatrix} \right ),
\end{align}
that results in the outputs going to zero exponentially. In robots like DURUS, where underactuations are frequently observed in every step, the number of outputs is less than the DOF of the robot, resulting in the dynamics of the coordinates that are normal to the transverse coordinates. These coordinates can be mathematically constructed to what are called zero dynamic coordinates, $z_\vi$. More details are given below.

\newsec{Zero dynamics.}
When the control objective is met such that $\eta_\vi = 0$ for all time then the system is said to be operating on the {\it  zero dynamics surface} \cite{TAC:amesCLF}. 
Further, by relaxing the zeroing of the velocity outputs, we can realize {\it partial zero dynamics surface} \cite{ames2014human}:
\begin{align}
\label{eq:zerodyn2}
 \PZD{\vi} = \{ (q,\dq) \in \Domain_{\vi}|_x : y_{2,\vi} = 0 , L_{f_\vi} y_{2,\vi} = 0 \}.
\end{align}
The humanoid DURUS has feet and employs ankle actuation to propel the hip forward during the continuous dynamics. Thus, a corresponding relative degree $1$ output is used, resulting in \textit{partial zero dynamics} \cite{ames2014human}. 

\subsection{Hybrid zero dynamics}
Any domain specific tracking controller guarantees partial zero dynamics only in the continuous dynamics. For hybrid systems, we use the notion of hybrid zero dynamics and partial hybrid zero dynamics. Therefore, for a hybrid control system $\HybridControlSystem$, {\it partial hybrid zero dynamics} (PHZD) can be guaranteed if and only if the discrete maps $\ResetMap_\ei$ are invariant of the partial zero dynamics in each domain. 
As a result, the parameters $\alpha_\vi$ of the outputs must be chosen in a way which renders the surface invariant through impacts: 
\begin{align}\label{eq:resetmapvW}
 \ResetMap_\ei ( \mathbb{PZ}_{{\rm{source}}(\ei)} \cap \Guard_{\ei}|_x ) \subset \mathbb{PZ}_{{\rm{target}}(\ei)},
\end{align}
where $\ei = ({\rm{source}}(\ei),{\rm{target}}(\ei))$ is the pair containing the source and target vertices for each edge.
\figref{fig:pzd} depicts the dynamics of a two domain hybrid system with 2-dimensional partial zero dynamics. The best way to ensure hybrid invariance under a discrete transition is through the careful selection of the desired trajectories (desired gait) via the parameterization: $\alpha_\vi$. Hence if the desired trajectories are a function of B\'ezier polynomials, the parameters $\alpha_\vi$ are the coefficients. These coefficients are chosen by using a direct collocation based walking gait offline optimization problem, which is explained in \cite{Hereid_etal_2016}.

The zero dynamics are characterized by the zero dynamic coordinates $z_\vi \in \R^{2n-k_{1,\vi}-2k_{2,\vi}}$, which when combined with the transverse coordinates $\eta_\vi$ form the transformed statespace with the following dynamics: 
\begin{align}
\label{eq:dynamicseta}
 \dot{\eta}_\vi  &= \begin{bmatrix}
                     L_{f_\vi} y_{1,\vi}    \\ L_{f_\vi} y_{2,\vi} \\
                     L_{f_\vi}^2 y_{2,\vi} 
                    \end{bmatrix} + \begin{bmatrix}
				    L_{g_\vi} y_{1,\vi} \\ 0\\
				    L_{g_\vi} L_{f_\vi} y_{2,\vi}
				    \end{bmatrix} u \nonumber \\
 \dot{z}_\vi    &= \Psi_\vi(\eta_\vi,z_\vi), \:\: \vi \in \Vertex
\end{align}
 When the transverse and the zero dynamics are combined it results in the full order dynamics. Based on this construction, we have the diffeomorphism $\Phi_\vi : \pi_x(\Domain_\vi) \to \R^{2n}$ that maps from $x = (\q,\dq)$ to $(\eta_\vi,z_\vi)$. This diffeomorphism can be divided into parts:
 \begin{align}
  \label{eq:diffeomorphism}
  \Phi_\vi (x) &=  \left[ \begin{array}{c} 
   \Phi_{1,\vi}(x) \\ \hdashline \Phi_{2,\vi}(x) \\ \hdashline \Phi_{3,\vi}(x)
  \end{array} \right] = \left[ \begin{array}{c}
			y_{1,\vi} (\q,\dq) \\ \hdashline y_{2,\vi} (\q) \\ \dot y_{2,\vi} (\q,\dq) \\ \hdashline z_\vi (\q,\dq)
		    \end{array} \right]     \\
  \Phi^\PZ_\vi (x) &=  \left[ \begin{array}{c} 
   \Phi_{1,\vi}(x)  \\ \hdashline \Phi_{3,\vi}(x)
  \end{array} \right]    
		    .
 \end{align}
 Similarly, the outputs can also be divided into two parts: 
 \begin{align}\label{eq:etasplit}\eta_\vi = \begin{bmatrix} y_{1,\vi} \\ \eta_{2,\vi}\end{bmatrix}, \: \rm{where} \quad  \eta_{2,\vi}=\begin{bmatrix} y_{2,\vi}\\ \dot y_{2,\vi}\end{bmatrix}.\end{align}

We can also define switching functions, $\ResetMap_\vi$ (not $\ResetMap_{(\sis,\dos)}$ or $\ResetMap_{(\dos,\sis)}$), for the transformed statespace (from $x$ to $(\eta,z)$). For example, $\ResetMap_\dos (\eta_\dos,z_\dos) := \Phi_\sis (\ResetMap_{(\dos,\sis)} (\Phi_\dos^{-1} (\eta_\dos,z_\dos) ))$, which can in turn be split into two components  $\ResetMap^{\eta_2}_\dos$, $\ResetMap^{\PZ}_\dos $ corresponding to the coordinates $\eta_{2,\vi}$ and $(y_{1,\vi},z_\vi)$ respectively.
With this new notation, we can reformulate the hybrid invariance conditions \eqref{eq:resetmapvW} to the following:
\begin{align}\label{eq:hybridinvariance}
 \ResetMap^{\eta_2}_{\dos} (y_{1,\dos},0,z_\dos) =  0,  \quad  \ResetMap^{\eta_2}_{\sis}  (y_{1,\sis},0,z_\sis) =  0.
\end{align}

\gap
\begin{remark}
 It is important to note that the invariance conditions \eqref{eq:hybridinvariance} can be ensured only if the model is known. Therefore, this gap is addressed by including the impact based uncertainty in the following manner:
 \begin{align}\label{eq:impactuncertainty}
  \ResetMap_\sis (\eta_\sis,z_\sis) & = \hat \ResetMap_\sis (\eta_\sis,z_\sis) + \ResetMap_\sis (\eta_\sis,z_\sis) - \hat \ResetMap_\sis (\eta_\sis,z_\sis), \nonumber \\
  \Rightarrow  |\ResetMap_\sis (\eta_\sis,z_\sis)|& \leq |\ResetMap_\sis (\eta_\sis,z_\sis)| + |d_{(\sis,\dos)}|
 \end{align}
where we have denoted the impact map of the actual model of DURUS as $\ResetMap_\sis$ (for single support phase), and that of the simulated model as $\hat \ResetMap_\sis$ respectively. The difference between these post-impact maps yields the new disturbance input $d_{(\sis,\dos)} : = \ResetMap_\sis  - \hat \ResetMap_\sis $ for the discrete dynamics. Therefore, by assuming that $d_{(\sis,\dos)}$ is bounded\footnote{Boundedness of $d_\ei$ is true for a wide variety of uncertainties in robotic systems. For example, \cite{kolathaya2016parameter} showed how $d_\ei$ is bounded for parameter uncertainties for the bipedal robot AMBER.}, we can establish ISS for the walking model of DURUS. We have the following assumption:
\gap
\begin{assumption}
\label{assumption:impact}
The disturbance effects of the impact map can be minimized by identifying the system model parameters. In other words, for every $\delta >0$, there exist system model parameters that satisfy $|d_\ei|\leq \delta$ for a compact set of values of $\eta_\vi$, $z_\vi$, where $\vi$ is the source vertex of $\ei$.
\end{assumption}
\end{remark}
\gap
It is important to note that Assumption \ref{assumption:impact} does not restrict our analysis of DURUS. It if sufficient to identify basic inertial parameters of the robot like masses of each link, battery model, gear inertia and spring stiffnesses. Friction effects and models of loose parts were not included.

In the next subsection we study dynamics of the outputs where time based reference trajectories are used.

\begin{figure}[t!]
\centering
	 \includegraphics[height= 2.9cm]{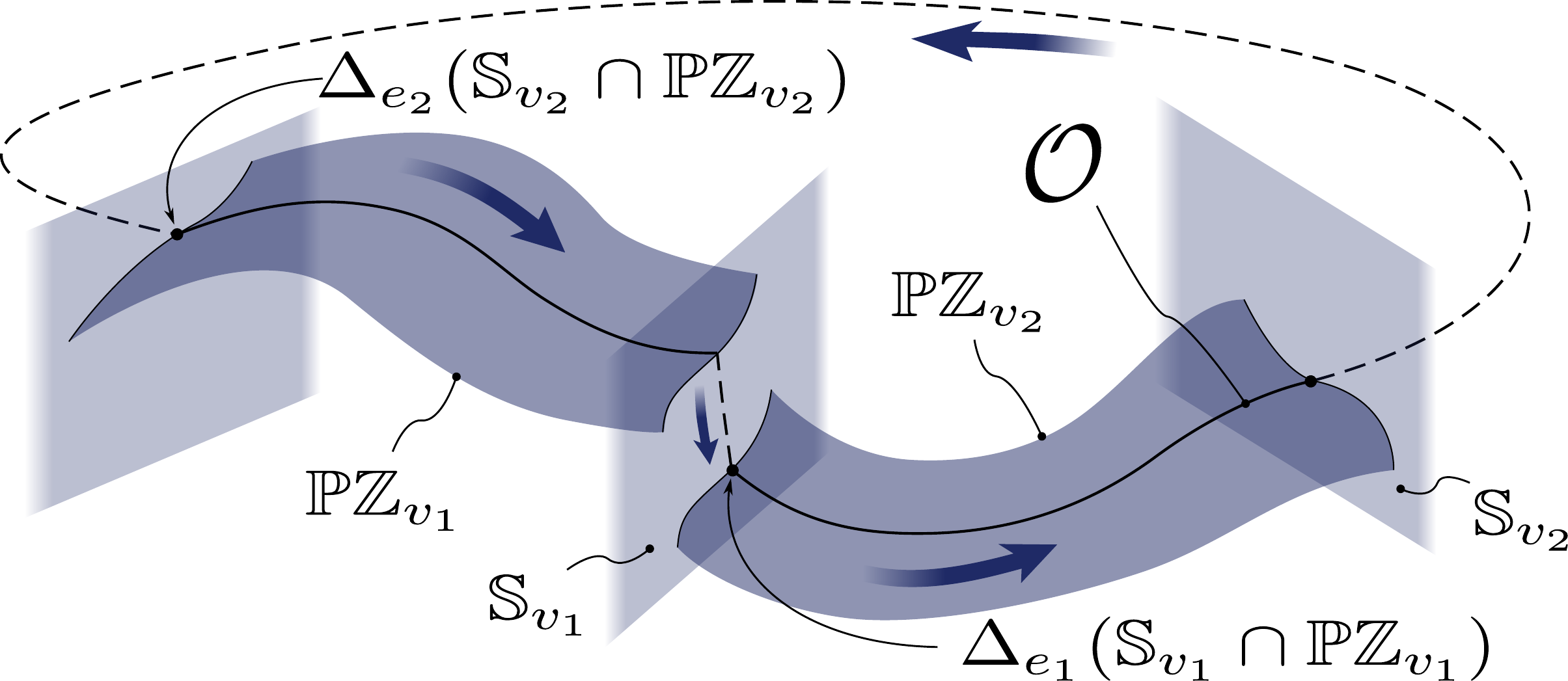}
	\caption{Figure showing a typical periodic orbit ($\mathcal{O}$) on the two dimensional partial hybrid zero dynamics (PHZD). }
	\label{fig:pzd}
\end{figure}



\subsection{Time based output dynamics}  
\label{subsec:timebasedoutputdynamics}
We studied time based tracking of joint angles in Appendix \ref{app:eisspdcontrol} for continuous robotic systems. Here, the goal is to realize time based outputs for walking. If the desired outputs are parameterized by time, we have the following output representation
\begin{align}
\label{eq:d2ot}
y^t_{2,\vi} (q) = y^a_{2,\vi}(q) - y^d_{2,\vi}(\tau_\vi(t),\alpha_\vi),
\end{align}
for the relative degree two (pose) outputs. The outputs are derived from \eqref{eq:d2o} where the phase is now dependent on time $\tau(t)$. 
The resulting output dynamics are obtained by taking the derivative
\begin{align}
\ddot{y}^t_{2} (q,\dq) &=  L_f^2 y^a_{2}(q) + L_g L_f y^a_{2}(q)u - \ddot{y}^d_{2}(\tau(t),\dot \tau(t), \ddot \tau(t), \alpha), \nonumber 
\end{align}
where the subscript $\vi$ is omitted for simpler representation. In order to drive these time dependent outputs $$\eta_t (x) = \begin{bmatrix} y_{1} (x) \\ \eta_{2,t} (x) \end{bmatrix} = \begin{bmatrix} y_1 (q,\dq) \\  y^t_2 (\q) \\ \dot{y}^t_2 (q,\dq) \end{bmatrix} \to 0,$$ we can choose time based feedback linearization:
\begin{align}
\label{eq:fblinmt}
u^t_{\rm{IO}} = \begin{bmatrix}L_{g} y_{1} \\  L_{g} L_{f} y^a_{2}\end{bmatrix} ^{-1} \left ( - \begin{bmatrix} L_{f} y_{1} \\ L_{f}^2 y^a_{2} - \ddot y^d_{2}\end{bmatrix} - \begin{bmatrix}  \epsilon y_{1} \\ 2\epsilon L_fy_{2} + \epsilon^2 y_{2} \end{bmatrix} \right ),
\end{align}
that results in the outputs going to zero exponentially.  

The time based output dynamics can be written in normal form as
\begin{align}\label{eq:dynamicsetat}
\dot{\eta}_t &=  \begin{bmatrix} L_f y^a_{1} \\ L_f y^a_{2} \\
                               L_f^2 y^a_{2} \end{bmatrix}    + \begin{bmatrix} L_g y^a_{1}   \\ 0 \\
                                                                              L_g L_f y^a_{2} \end{bmatrix} u - \begin{bmatrix} 0 \\                       	\dot{y}^d_{2} \\        \ddot{y}^d_{2} \end{bmatrix}, \quad
                                                                              &\dot{z}_t = \Psi_t(\eta_t,z_t).
\end{align}
$z_t$ are the set of zero dynamic coordinates normal to $\eta_t$ and has the invariant dynamics $\dot{z}_t = \Psi_t(0,z_t)$. 
For the time based states, $\eta_t,z_t$, we have the diffeomorphism: $\Phi_t(x)=(\eta_t(x),z_t(x))$.

\subsection{PHZD reconstruction and linear feedback laws}
\label{sec:PHZDrecons}
One of the main advantages of studying PHZD is the simpler form of the dynamics that it takes in reduced dimensions \cite{ames2014human}.
Given a suitable feedback control law, and stable PHZD, we can explicitly reconstruct the solution of the full order system from this reduced dimensional states.
We have the following linear state feedback law:
\begin{align}
 x_d &= \begin{bmatrix} q_{d,\vi} \\ \dot q_{d,\vi} \end{bmatrix} = \Phi^{-1}_\vi (y_{1,\vi},0,z_\vi) \nonumber \\
 u_{\rm{PD}}(q,\dq) &= -K_{p,\vi} (q - q_{d,\vi} ) - K_{d,\vi} (\dq - \dq_{d,\vi}), \quad \vi\in\Vertex, 
\label{eq:linearfeedbacklaw}
\end{align}
where the desired angles and velocities are obtained from the PHZD reconstruction via the inverse diffeomorphism (see \cite{ames2014human}). 
For cases where the state based parameterization $\tau(\q)$ is noisy due to poor sensing, we realize a time based parameterization of the desired angles and velocities:

\noindent\hrulefill
\begin{align}
 x^t_d &= \begin{bmatrix} q^t_{d,\vi} \\ \dot q^t_{d,\vi} \end{bmatrix} = \Phi^{-1}_{t,\vi} (y^t_{1,\vi},0,z^t_\vi) \nonumber \\
 u^t_{\rm{PD}}(q,\dq) &= -K^t_{p,\vi} (q - q^t_{d,\vi} ) - K^t_{d,\vi} (\dq - \dq^t_{d,\vi}), \quad \vi\in\Vertex, 
\label{eq:linearfeedbacklawt}
\end{align}
\hrulefill

\noindent where the time based PHZD reconstruction is utilized. Note that when $m < n$ (underactuation), the gain matrices $K_{p,\vi},K_{d,\vi},K^t_{p,\vi},K^t_{d,\vi}$ are no longer square.

For the bipedal robot, DURUS, the gain matrices are manually tuned via simulation and ensure that the input $d$ and outputs $\eta_\vi$ are ``small'' over a large number of steps (typically $50-100$). This process is repeated experimentally until the tracking errors are minimized to acceptable levels. There are, indeed, more formal ways to obtain optimal values for $K^t_{p,\vi},K^t_{d,\vi}$ such that the resulting disturbance is bounded and minimal.  
This will be studied in detail in future work.


\section{Stability of Walking }
\label{sec:theory}

In this section, we will investigate the stability of walking for both the state based controllers \eqref{eq:linearfeedbacklaw} and time based controllers \eqref{eq:linearfeedbacklawt}. It is important to note that ISS of linear feedback laws for bipedal walking robots has not been shown before. Hence, the results in this section extend the ISS results of continuous robotic systems to hybrid robotic systems. Formally, by viewing walking as an alternating sequence of double support and single support phases, we check for conditions that result in attractive and forward invariant periodic orbits. Further, we analyze the robustness by modeling the uncertainties of the system. 

Substitution of the control law \eqref{eq:linearfeedbacklaw} in \eqref{eq:dynamicseta}, and adding and subtracting \eqref{eq:fblinm} results in
\begin{align}
\label{eq:dynamicsetareform}
 \dot{\eta}_\vi  &= \begin{bmatrix}
                     L_{f_\vi} y_{1,\vi}    \\ L_{f_\vi} y_{2,\vi} \\
                     L_{f_\vi}^2 y_{2,\vi} 
                    \end{bmatrix} + \begin{bmatrix}
				    L_{g_\vi} y_{1,\vi} \\ 0\\
				    L_{g_\vi} L_{f_\vi} y_{2,\vi}
				    \end{bmatrix} u_{\rm{IO}} +  \begin{bmatrix}
				    L_{g_\vi} y_{1,\vi} \\ 0\\
				    L_{g_\vi} L_{f_\vi} y_{2,\vi}
				    \end{bmatrix} \underbrace{ u_{\rm{PD}} - u_{\rm{IO}}}_{=:d_1} \nonumber \\
 \dot{z}_\vi    &= \Psi_\vi(\eta_\vi,z_\vi), \:\: \vi \in \Vertex,
\end{align}
where the disturbance input $d_1$ is the difference between the applied control law, \eqref{eq:linearfeedbacklaw}, and the stabilizing control law, \eqref{eq:fblinm}. This formulation is similar to \eqref{eq:dynamicsreference}, wherein a standard stabilizing controller is added and subtracted to study ISS properties of the closed loop dynamics. By substituting a time based tracking control law \eqref{eq:linearfeedbacklawt} in \eqref{eq:dynamicseta} we have 
\begin{align}
\label{eq:dynamicsetareformt}
 \dot{\eta}_\vi  &= \! \begin{bmatrix}
                     L_{f_\vi} y_{1,\vi}    \\ L_{f_\vi} y_{2,\vi} \\
                     L_{f_\vi}^2 y_{2,\vi} 
                    \end{bmatrix} \! + \! \begin{bmatrix}
				    L_{g_\vi} y_{1,\vi} \\ 0\\
				    L_{g_\vi} L_{f_\vi} y_{2,\vi}
				    \end{bmatrix} u^t_{\rm{IO}}  \! + \! \begin{bmatrix}
				    L_{g_\vi} y_{1,\vi} \\ 0\\
				    L_{g_\vi} L_{f_\vi} y_{2,\vi}
				    \end{bmatrix}  \! \underbrace{ u^t_{\rm{PD}} - u^t_{\rm{IO}} }_{=:d_2}  \nonumber \\
 \dot{z}_\vi    &= \Psi_\vi(\eta_\vi,z_\vi), \:\: \vi \in \Vertex,
\end{align}
where the new disturbance input $d_2$ is defined by addition and subtraction of the time based feedback linearization $u^t_{\rm{IO}}$ \eqref{eq:fblinmt}. Furthermore, by adding and subtracting the stabilizing controller $u_{\rm{IO}}$ in \eqref{eq:dynamicsetareformt}, $d_3$ can be substituted. This means that $d_3$ can be reduced as long as the time based phase variable closely matches with the state based phase variable. This mismatch between the time and state based implementation results in the uncertainty, called phase uncertainty, discussed in detail in \cite{kolathaya2016time}. Therefore, $d_2$ corresponds to the model based uncertainties and $d_3$ corresponds to the phase based uncertainty. The approach shown in this paper then yields stability of periodic orbits under a nonzero $d=d_2+d_3$. To include the domain dependency of the controller, we add the notation $\vi$ in $d_\vi = d_{2,\vi} + d_{3,\vi}$. This yields the transverse dynamics 

\noindent\hrulefill
\begin{align}
\label{eq:dynamicsetareformfullt}
 \dot{\eta}_\vi  &= \begin{bmatrix}
                     L_{f_\vi} y_{1,\vi}    \\ L_{f_\vi} y_{2,\vi} \\
                     L_{f_\vi}^2 y_{2,\vi} 
                    \end{bmatrix} + \begin{bmatrix}
				    L_{g_\vi} y_{1,\vi} \\ 0\\
				    L_{g_\vi} L_{f_\vi} y_{2,\vi}
				    \end{bmatrix} u_{\rm{IO}}  + \begin{bmatrix}
				    L_{g_\vi} y_{1,\vi} \\ 0\\
				    L_{g_\vi} L_{f_\vi} y_{2,\vi}
				    \end{bmatrix}  d_\vi \\
\label{eq:dynamicszform}
 \dot{z}_\vi    &= \Psi_\vi(\eta_\vi,z_\vi), \:\: \vi \in \Vertex.
\end{align}
\hrulefill

\noindent Note that additional uncertainties can also be similarly modeled, and will be considered in future work.


Given the feedback control law \eqref{eq:linearfeedbacklawt}, we can establish stability of transverse dynamics w.r.t. the input $d_\vi$. In other words, we can establish exponential $d_\vi$ to $\eta_\vi$ stability. 
\gap 
\begin{lemma}{\it
 \label{lm:eiss}
 The transverse dynamics of the form \eqref{eq:dynamicsetareformfullt}, where \eqref{eq:fblinm} is substituted for $u_{\rm{IO}}$, is exponential $d_\vi$ to $\eta_\vi$ stable. In addition, due to the separability of the dynamics of $y_{1,\vi},y_{2,\vi}$, the transverse dynamics also yields exponential $d_\vi$ to $\eta_{2,\vi}$ stability.}
\end{lemma}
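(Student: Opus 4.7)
The plan is to recognize Lemma \ref{lm:eiss} as essentially a direct extension of Lemma \ref{lm:eissofcontinuous} to a setting with mixed relative-degree outputs, exploiting the block-triangular structure that results when $u_{\rm{IO}}$ from \eqref{eq:fblinm} is substituted into \eqref{eq:dynamicsetareformfullt}. First I would perform the substitution explicitly: since feedback linearization cancels the drift terms $L_{f_\vi}y_{1,\vi}$ and $L_{f_\vi}^2 y_{2,\vi}$ and imposes stabilizing linear dynamics, the closed-loop transverse dynamics collapse into the decoupled pair $\dot y_{1,\vi} = -\epsilon y_{1,\vi} + L_{g_\vi}y_{1,\vi} d_\vi$ and $\ddot y_{2,\vi} = -2\epsilon \dot y_{2,\vi} - \epsilon^2 y_{2,\vi} + L_{g_\vi}L_{f_\vi}y_{2,\vi} d_\vi$. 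Stacking these in the state $\eta_\vi = [y_{1,\vi}^T,\eta_{2,\vi}^T]^T$ yields $\dot \eta_\vi = A_\vi \eta_\vi + B_\vi d_\vi$, where $A_\vi$ is block diagonal with blocks $-\epsilon \1$ and the Hurwitz matrix $A_2$ of \eqref{eq:eta2dynamicsdescription}, while $B_\vi$ inherits its state-dependence from $L_{g_\vi}y_{1,\vi}$ and $L_{g_\vi}L_{f_\vi}y_{2,\vi}$.

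Next I would construct a composite ISS-Lyapunov function $V_{\eta_\vi} = V_{y_{1,\vi}} + V_{\eta_{2,\vi}}$ where $V_{y_{1,\vi}} = y_{1,\vi}^T P_1 y_{1,\vi}$ with $P_1 = \tfrac{1}{2\epsilon}\1$ (so that $A^T_1 P_1 + P_1 A_1 = -\1$) and $V_{\eta_{2,\vi}} = \eta_{2,\vi}^T P_2 \eta_{2,\vi}$ with $P_2$ solving the Lyapunov equation of Lemma \ref{lm:eissofcontinuous}. Differentiating along trajectories produces $\dot V_{\eta_\vi} = -\eta_\vi^T Q_\vi \eta_\vi + 2\eta_\vi^T P_\vi B_\vi d_\vi$ with $Q_\vi>0$ block diagonal, which by the same completing-the-square / small-gain argument as in \eqref{eq:lyapunovmainderivative}--\eqref{eq:etaboundmaadi} gives the form \eqref{eq:ISSdstricteralternate}: $\dot V_{\eta_\vi} \leq -\gamma_1 V_{\eta_\vi}$ whenever $|\eta_\vi| \geq \tfrac{2\lambda_{\max}(P_\vi)}{\gamma_2 \lambda_{\min}(P_\vi)} \|B_\vi\| \|d_\vi\|_\infty$. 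Boundedness of $\|B_\vi\|$ follows verbatim from the computation at the end of Lemma \ref{lm:eissofcontinuous}, using Properties \ref{prop:1}, \ref{prop:2} together with Assumption \ref{assumption:1} and the boundedness of the output Jacobian by construction.

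For the second claim, I would simply observe that the dynamics of $\eta_{2,\vi}$ in the closed-loop block form are self-contained: they depend on $\eta_{2,\vi}$ and $d_\vi$ alone, with no coupling from $y_{1,\vi}$. Hence the analysis above restricted to $V_{\eta_{2,\vi}}$ yields $\dot V_{\eta_{2,\vi}} \leq -\gamma_1 V_{\eta_{2,\vi}}$ for $|\eta_{2,\vi}|$ above the analogous gain threshold in $\|d_\vi\|_\infty$, which is precisely exponential $d_\vi$ to $\eta_{2,\vi}$ stability in the sense of \eqref{eq:ISSmainexpodefinition}.

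The computations are mostly routine once the block-triangular (in fact block-diagonal) structure is set up, so I do not anticipate a serious obstacle; the only slightly delicate point is picking the weights $\gamma_1,\gamma_2$ so that $(\gamma_1+\gamma_2)P_\vi \leq Q_\vi$ simultaneously in both blocks, which is handled by choosing $Q_\vi$ block diagonal and scaling the two blocks independently. The second claim follows cleanly thanks to the decoupling, so no extra work is required beyond identifying that the $y_{1,\vi}$ sub-block does not feed into the $\eta_{2,\vi}$ sub-block.
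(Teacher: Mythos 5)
Your proposal is correct and follows essentially the same route as the paper: substitute the feedback-linearizing controller to obtain $\dot\eta_\vi = A_\vi\eta_\vi + B_\vi d_\vi$ with $A_\vi$ Hurwitz, apply the quadratic ISS-Lyapunov argument of Lemma \ref{lm:eissofcontinuous}, and use the decoupled $\eta_{2,\vi}$ sub-block (equivalently, the sub-block $P_{2,\vi}$ of $P_\vi$) for the second claim. Your explicit block-diagonal choice of $P_\vi$ is just a concrete instance of the paper's Lyapunov-equation solution, so there is no substantive difference.
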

\gap
\begin{proof}
 Substitution of \eqref{eq:fblinm} yields output dynamics of the form 
 \begin{align}\label{eq:bdescription}
\dot \eta_\vi = \underbrace{  \begin{bmatrix}
                               -\epsilon \1 & \0 & \0 \\
                               \0 &\0 & \1 \\
                               \0 & - \epsilon^2 \1 & - 2 \epsilon \1
                              \end{bmatrix}  }_{A_\vi} \eta_\vi + \underbrace{\begin{bmatrix}
				    L_{g_\vi} y_{1,\vi} \\ 0\\
				    L_{g_\vi} L_{f_\vi} y_{2,\vi}
				    \end{bmatrix}}_{B_\vi}  d_\vi,
\end{align}
where $A_\vi$ is Hurwitz (see eqn. (11) in \cite{TAC:amesCLF}). This can be viewed in terms of Lyapunov functions:
 \begin{align}
  V_\vi (\eta_\vi) : = \eta_\vi^T P_\vi \eta_\vi,  
 \end{align}
 where $P_\vi$ is the solution to the Lyapunov equation $A^T_\vi P_\vi + P_\vi A_\vi = - Q_\vi$, $Q_\vi > 0$. The rest follows from proof of Lemma \ref{lm:eissofcontinuous}.
 To establish exponential $d_\vi$ to $\eta_{2,\vi}$ stability (see \eqref{eq:etasplit} for the description of $\eta_{2,\vi}$), we separate the dynamics to yield
 \begin{align}
  \dot \eta_{2,\vi} = \underbrace{  \begin{bmatrix}
                               \0 & \1 \\
                               - \epsilon^2 \1 & - 2 \epsilon \1
                              \end{bmatrix}  }_{A_{2,\vi}} \eta_{2,\vi} + \underbrace{\begin{bmatrix}
				             0\\
				    L_{g_\vi} L_{f_\vi} y_{2,\vi}
				    \end{bmatrix}}_{B_{2,\vi}}  d_\vi,
 \end{align}
 and the rest follows \eqref{eq:lyapunovmainderivative}, \eqref{eq:lyapunovmainderivative2} with the reduced Lyapunov function $V_{\eta_{2,\vi}} : = \eta^T_{2,\vi} P_{2,\vi} \eta_{2,\vi}$, where $P_{2,\vi} > 0 $ is, in fact, a sub-block of $P_\vi$.
\end{proof}
\gap

Given a suitable tracking control law, Lemma \ref{lm:eiss} provides an explicit way of computing ultimate bounds on the outputs. 
It is important to note that $B_\vi$ is dependent on the choice of the outputs (see \eqref{eq:bdescription} for the description of $B_\vi$), and $d$ is dependent on the choice of the control law. This provides us with an elegant way to carefully choose and tune the outputs and the control gains $K^t_{p,\vi},K^t_{d,\vi}$ such that the uncertainties are overcome in an effective manner. This result is next extended to hybrid systems, specifically, hybrid periodic orbits.

\subsection{Periodic orbits and \poincare maps}
It is a well known fact that stability of \poincare maps implies the stability of hybrid periodic orbits (and vice versa); proof of which is shown in \cite{morris2009hybrid}. This result was extended for systems with disturbance inputs in \cite{veer2017poincare}, where the same relationship was established for ISS of hybrid periodic orbits. Therefore, the goal of this subsection is to define periodic orbits, stability of periodic orbits, and the corresponding stability of \poincare maps in the context of DURUS two domain walking. This result is then extended to include ISS of periodic orbits in Section \ref{sec:stabilityhzd}.

Substitution of the control law \eqref{eq:linearfeedbacklawt} results in the dynamics \eqref{eq:dynamicsetareformfullt}. Denote its flow as
 $\phi_{t,\vi}$. 
We will first study the flow for zero disturbance $d_\vi=d_\ei=0$, and then extend for a nonzero $d$.
For the resulting hybrid dynamics, we have a periodic orbit, if, for some $(\eta^*_\dos,z^*_\dos)\in \Phi_\dos (\Guard_\dos |_x)$, $(\eta^*_\sis,z^*_\sis)\in \Phi_\sis (\Guard_\sis |_x)$,
and some $T^*_\dos , T^*_\sis > 0$,
\begin{align}
  (\eta^*_\sis,z^*_\sis) &= \phi_{T^*_\sis,\sis} \circ \Delta_\dos (\eta^*_\dos,z^*_\dos) \nonumber \\ 
    (\eta^*_\dos,z^*_\dos) & = \phi_{T^*_\dos,\dos} \circ \Delta_\sis (\eta^*_\sis,z^*_\sis) ,
\end{align}
The above equality conditions only ensure that the end point of the flow in each domain is connected with the initial point in the next domain.
\figref{fig:pzd} shows an example of a hybrid periodic orbit. For $\vi = \dos$, we have the set of points
\begin{align}
 \Orbit_\dos &= \{ \phi_{t,\dos} ( \Delta_\sis (\eta^*_\sis,z^*_\sis))   \in \Phi_\dos (\Domain_\dos |_x ) |   0 \leq t < T^*_\dos  \}.
\end{align}
We can similarly obtain the set of points $\Orbit_\sis$ for the single support phase. Hence, we can define the periodic orbit to be the pair $$\Orbit := \{ \Orbit_\dos, \Orbit_\sis \},$$ which has the period $T^* = T^*_\dos + T^*_\sis$. 
Similar formulations follow for defining a periodic orbit in the PHZD  
as the pair $$\Orbit^\PZ := \{ \Orbit^\PZ_\dos, \Orbit^\PZ_\sis \},$$ where the elements are defined via the reduced order flow $\phi^\PZ_{t,\vi}$. For example
\begin{align}
 \Orbit^\PZ_{\dos} &= \{ \phi^\PZ_{t,{\dos}} ( \Delta^\PZ_{\dos} (y^*_{1,\sis},z^*_{\sis}))   \in \Phi^\PZ_{\dos} (\PZD{\dos} ) |   0 \leq t < T^*_{\dos}  \}.\nonumber
\end{align}
Note that, $T^*_\dos,T^*_\sis$ are called the times to impact (time to reach the guard) for the corresponding flows in the domain. 
This can be generalized further to define time to impact functions for states starting from the neighborhood of the orbit. For example, for $(\eta_\sis,z_\sis) \in \B_r(\eta^*_\sis,z^*_\sis) \cap \Phi_{\sis} (\Guard_\sis|_x)$, we have
\begin{align}\label{eq:timetoimpactphase}
T_{\dos}(\eta_{\sis},z_{\sis}) = \inf \{ t\geq 0 | & \phi_{t,\dos}\circ \Delta_\sis (\eta_{\sis},z_{\sis}) \in \dots \nonumber \\
&\B_r(\eta^*_\sis,z^*_\sis) \cap \Phi_{\sis} (\Guard_\sis|_x) \}.  
\end{align}

Denote $\B_* := \B_r(\eta^*_\sis,z^*_\sis) \cap \Phi_{\sis} (\Guard_\sis|_x)$ as the neighborhood of $(\eta^*_\sis,z^*_\sis)$ intersected with the guard. Given the flows $\phi_{t,\dos},\phi_{t,\sis}$ and given the time to impact functions $T_\dos,T_\sis$, we can define \poincare maps for the initial state 
$(\eta_\sis,z_\sis) \in \B_*$ to be
\begin{align}
\label{eq:poincaremapphase}
\pcare(\eta_{\sis}, z_{{\sis}}) =  \phi_{T_{\sis},\sis} \circ \ResetMap_{\dos}  \circ \phi_{T_{\dos},\dos} \circ \ResetMap_{\sis} (\eta_{\sis},z_{\sis}).
\end{align}
The \poincare maps are mapped to and from the guard of the final domain subscript $\sis$. 
The \poincare map $\pcare$ can also be separated into two components $\pcare_{\eta_2},\pcare_{\PZ}$ corresponding to the coordinates $\eta_{2,\vi}$ and $(y_{1,\vi},z_\vi)$ respectively.

\newsec{Stability of periodic orbits.} Stability of periodic orbits can be defined via \poincare maps \cite{MOGR05}. Hence, if the \poincare map is applied $i$ times on the initial condition $(\eta^*_\sis,z^*_\sis)$, then we have the final state as $\pcare^i (\eta^*_\sis,z^*_\sis)$. Hence, we will define exponential stability for the discrete time system induced by the \poincare map.
We say that the periodic orbit $\Orbit$ is exponentially stable
if there exists an $\xi_p \in(0,1)$, $N_p>0$ such that for any initial condition $(\eta_\sis,z_\sis) \in \B_*$, 
the resulting discrete system satisfies 
\begin{align} 
| \pcare^i(\eta_\sis,z_\sis)- (\eta^*_\sis,z^*_\sis) | \leq N_p \xi^i_p | (\eta_\sis,z_\sis)- (\eta^*_\sis,z^*_\sis)|.
\end{align}
Stability of $\Orbit^\PZ$ can also be similarly defined. Also note that $\eta^*_{2,\vi} = 0$ for every $\vi$. We will discuss e-ISS of $\Orbit$ next.

\section{ISS of Hybrid Periodic Orbits}
\label{sec:stabilityhzd}
The goal of this section is to establish e-ISS of $\Orbit$ for inputs of the form \eqref{eq:linearfeedbacklawt}. In other words, the goal is to establish e-ISS of $\Orbit$ given that the reduced periodic orbit $\Orbit^\PZ$ is e-ISSable.
We will start with the definition of e-ISS for $\Orbit$, which will be again defined w.r.t. \poincare maps. Without loss of generality, we will drop the domain subscript notation for the initial states $(\eta,z) = (\eta_\sis,z_\sis)$ on the guard $\Guard|_x = \Guard_\sis|_x$, the surface $\PZD{} = \PZD{\sis}$, and also assume at $(\eta^*_\sis,z^*_\sis) = (0,0)$. Given that the disturbance $d_\vi$ is applied in addition to the control law \eqref{eq:linearfeedbacklawt}, the resulting flows $\phi_{t,\vi},\phi^\PZ_{t,\vi}$, and time to impact functions $T_\vi$ are now dependent on the disturbance $d_\vi$ in each continuous domain. Accordingly, the \poincare map $\pcare$ is now dependent on $d_\vi$, $d_\ei$, the disturbance effects from both the continuous and discrete events.
\gap
\begin{definition}{\it
\label{def:orbiteissdefinition}
The periodic orbit $\Orbit$ is e-ISS (exponential input to state stable) if there is $\xi_p \in(0,1)$, $N_p>0$ and $\iota_p \in \classK_\infty$ such that for any initial condition $(\eta,z) \in \B_*$, the resulting \poincare map satisfies
\begin{align}
| \pcare^i(\eta,z) | \leq N_p \xi^i_p | (\eta,z)| + \iota_p (\|d\|_{\max}). 
\end{align}}
\end{definition}
\gap
e-ISS of $\Orbit^\PZ$ is also similarly defined. Note that the disturbance input $\|d\|_{\max}$ is nothing but the maximum of the input disturbances in each discrete and continuous event: 
\begin{align}
\|d\|_{\max} = \max \left \{ \max_{\vi \in \Vertex} \|d_\vi\|_\infty, \max_{\ei \in \Edge} |d_\ei | \right \}.
\end{align}
Given Definition \ref{def:orbiteissdefinition}, we can now state the main theorem that establishes ISS of $\Orbit$\footnote{Proof of this main theorem is inspired by \cite{TAC:amesCLF}.}. Note that all the assumptions from \ref{assumption:1} to \ref{assumption:impact} are assumed to be valid.

\subsection{Main theorem}
\gap
\begin{theorem}{\it
\label{thm:mtiss}
Let $\Orbit^\PZ$ be an exponentially stable periodic orbit of the partial hybrid zero dynamics for a zero disturbance $\|d\|_{\max}=0$. For a sufficiently large enough $\epsilon$ there exists $\delta> 0$ such that for $\|d\|_{\max} \leq \delta$, for all initial conditions $(\eta,z)\in \B_*$, and for a linear feedback law \eqref{eq:linearfeedbacklawt}, the full order periodic orbit $\Orbit$ is e-ISS.}
\end{theorem}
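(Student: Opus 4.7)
\textbf{Proof proposal for Theorem \ref{thm:mtiss}.}

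The plan is to mimic the CLF-style argument of \cite{TAC:amesCLF} but carry the disturbance terms through the entire construction. First, I would bound the transverse coordinates over a single step. By Lemma \ref{lm:eiss}, the dynamics \eqref{eq:dynamicsetareformfullt} is exponential $d_\vi$ to $\eta_{2,\vi}$ stable, so on each continuous phase one obtains an estimate of the form
\begin{align*}
|\eta_{2,\vi}(t)| \leq N\, e^{-\lambda(\epsilon) t}\, |\eta_{2,\vi}(0)| + \iota_c(\|d_\vi\|_\infty),
\end{align*}
where $\lambda(\epsilon)\to\infty$ as $\epsilon\to\infty$. At the impact, Assumption \ref{assumption:impact} and the decomposition \eqref{eq:impactuncertainty} give $|\Delta_\vi(\eta,z)| \leq |\hat\Delta_\vi(\eta,z)| + |d_\ei|$, and Lipschitz continuity of the (smooth, nominal) reset map on the compact neighborhood $\B_\ast$ turns the bound on $\eta_{2,\vi}$ into a bound on $\Delta^{\eta_2}_\vi$ post-impact. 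Composing these across the two continuous phases and two discrete events, one gets
\begin{align*}
|\pcare_{\eta_2}(\eta,z)| \leq C_1 e^{-\lambda(\epsilon) T^\ast}\, |\eta_2| + C_2\, \|d\|_{\max},
\end{align*}
so for $\epsilon$ large enough the $\eta_2$-component of the Poincar\'e map contracts up to a disturbance term.

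Next I would analyze the PHZD component $\pcare_{\PZ}$. By hypothesis $\Orbit^\PZ$ is exponentially stable for $\|d\|_{\max}=0$, so there exist $\xi_0\in(0,1)$ and $N_0>0$ with $|\pcare_{\PZ}(0, (y_1,z)) - (0,0)| \leq N_0 \xi_0 |(y_1,z)|$ on $\B_\ast\cap \PZD{}$. I would then show that for nonzero $\eta_2$ and nonzero disturbance, the full Poincar\'e map $\pcare$ is a small perturbation of its restriction to PHZD. Concretely, continuous dependence of the flows $\phi_{t,\vi}$ on initial conditions and on the disturbance $d_\vi$ (using Assumption \ref{assumption:1}), together with Lipschitz continuity of the reset maps, implies a bound
\begin{align*}
|\pcare_{\PZ}(\eta,z) - \pcare_{\PZ}(0,(y_1,z))| \leq L_1 |\eta_2| + L_2 \|d\|_{\max},
\end{align*}
provided the time-to-impact functions $T_\vi$ remain well-defined and bounded on $\B_\ast$; this is inherited from transversality of the nominal orbit to $\Guard_\vi$ for small enough $\|d\|_{\max}\leq\delta$.

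Combining the two estimates gives the discrete ISS inequality
\begin{align*}
|\pcare(\eta,z)| \leq N_0\xi_0\,|(y_1,z)| + (L_1 + C_1 e^{-\lambda(\epsilon)T^\ast})|\eta_2| + \tilde\iota(\|d\|_{\max}).
\end{align*}
Choosing $\epsilon$ large enough that $C_1 e^{-\lambda(\epsilon)T^\ast} + L_1 < \xi_p < 1$ and iterating in the standard way (as in Gronwall-type arguments on discrete systems, cf. \cite{veer2017poincare}) yields
\begin{align*}
|\pcare^i(\eta,z)| \leq N_p \xi_p^i |(\eta,z)| + \iota_p(\|d\|_{\max}),
\end{align*}
which is exactly Definition \ref{def:orbiteissdefinition}.

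The main obstacle I anticipate is the uniformity argument needed to keep the time-to-impact functions $T_\vi(\cdot,d_\vi)$ well-defined, Lipschitz, and bounded under the perturbed flow: one must ensure that trajectories starting in $\B_\ast$ still transversally hit $\Guard_\vi$ despite the disturbances, which requires choosing $\delta$ small enough relative to the transversality margin of the nominal orbit. A secondary subtlety is selecting $\epsilon$ large enough so that the $\eta_2$-induced perturbation of $\pcare_{\PZ}$ is dominated by the contraction rate $\xi_0$ on the PHZD; this is the nonlinear analogue of the time-scale separation used in the CLF-based hybrid stability proofs, and verifying it rigorously under a nonzero disturbance is the technical heart of the argument.
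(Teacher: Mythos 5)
Your step-by-step construction up to the combination stage is essentially the content of Lemma \ref{lm:mainlemmaphase}: the per-phase e-ISS estimate on $\eta_{2,\vi}$ from Lemma \ref{lm:eiss}, the propagation of $|\eta_2|$ and $|d_\ei|$ through the resets via \eqref{eq:impactuncertainty} and Lipschitz continuity, the perturbation of the time-to-impact functions, and the bound $|\pcare_\PZ(\eta,z)-\vartheta(y_1,z)|\leq A_2|\eta_2|+D_2\|d\|_{\max}$ of \eqref{eq:secondlemmaphase} all match the paper. The divergence, and the genuine gap, is in how you close the argument. You combine the two estimates into a single-step norm inequality with the term $N_0\xi_0|(y_1,z)|$ and then ``iterate in the standard way.'' But exponential stability of $\Orbit^\PZ$ only gives $|\vartheta^i(\zeta)|\leq N_0\xi_0^i|\zeta|$ with $N_0$ possibly much larger than $1$; the one-step map $\vartheta$ is \emph{not} a contraction in the Euclidean norm, so $N_0\xi_0$ may exceed $1$ and your recursion does not produce a geometric decay $N_p\xi_p^i$. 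Moreover, the coefficient $L_1$ (the sensitivity of $\pcare_\PZ$ to $\eta_2$, i.e., $A_2$ in \eqref{eq:secondlemmaphase}) is fixed by the Lipschitz constants of the flow and reset maps and does \emph{not} shrink as $\epsilon\to\infty$, so the condition ``$C_1e^{-\lambda(\epsilon)T^\ast}+L_1<\xi_p<1$'' cannot in general be met by increasing $\epsilon$; what $\epsilon$ actually controls is only the contraction of the $\eta_2$-block, and $L_1$ need not be small because the cascade structure (not a small-gain condition on $L_1$ itself) is what must be exploited.

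The paper repairs exactly this point by working in a Lyapunov metric rather than the Euclidean norm: the converse Lyapunov theorem applied to the exponentially stable restricted \poincare map yields $V_\vartheta$ satisfying \eqref{eq:discreteVrhop1}--\eqref{eq:discreteVrhop3}, in which $\vartheta$ \emph{is} a one-step contraction, and the composite function $V_P(\eta,z)=V_\vartheta(\zeta)+\sigma V_{\epsilon,\eta_2}(\eta_2)$ is shown to satisfy a one-step decrease of the form \eqref{eq:ISSdstricter} by making a $3\times 3$ quadratic form in $(|\eta_2|,|\zeta|,\|d\|_{\max})$ negative semidefinite; the free weight $\sigma$ (together with a large $\epsilon$ that shrinks $A_3$) is what absorbs the cross terms that your norm iteration cannot control. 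To salvage your route you would either need to assume $N_0=1$ (one-step contraction of $\vartheta$), or replace the final step with a properly formulated cascade/ISS small-gain argument for discrete systems, or adopt the composite Lyapunov construction as in the paper and \cite{TAC:amesCLF,kolathaya2016parameter}. Your concluding remarks about keeping the time-to-impact functions well defined and transversal under small $\delta$ are correct and are handled implicitly in the paper by restricting to $\B_*$ and choosing $\delta$ at the end of the proof.
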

\gap
Before proving Theorem \ref{thm:mtiss}, we will establish some properties of $\Orbit^\PZ$. Denote $\zeta: = (y_1,z)$, and denote the neighborhood of $\zeta_0:=(0,0)$ (which is the fixed point of $\Orbit^\PZ$ on the guard) as 
\begin{align}
 \label{eq:neighborhooddefinition}
 \B_\zeta := \mathbb{B}_r(0,0) \cap \Phi^\PZ (\PZD{} \cap \Guard |_x),
\end{align}
where the domain subscripts are suppressed for ease of notations. e-ISS of $\Orbit^\PZ$ implies that for $\|d\|_{\max}=0$ there exists $ r > 0$ such that  the following mapping 
$$\vartheta : \B_\zeta \to \B_\zeta, $$ is well defined for all $ \zeta \in \B_\zeta$. Here, $\vartheta$ is called the restricted \poincare map for the PHZD. We denote the solution to this \poincare map as $\zeta(i)$, where $i=\{0,1,2,\dots\}$, such that $\zeta({i+1}) = \vartheta(\zeta(i))$ for all $i$. \poincare map is exponentially stable if $$ |\zeta(i) | \leq  N \xi^i |\zeta(0) |,$$ for some $ N > 0$, $\xi \in (0,1)$ and all $i \geq 0$. Therefore, by converse Lyapunov theorem, there exists a Lyapunov function $V_\vartheta$, defined on $\B_\zeta $ for some $r > 0$ (possibly smaller than the previously defined $r$), and positive constants $b_1, b_2, b_3, b_4$ such that
  \begin{align}
   \label{eq:discreteVrhop1}
  &  b_1 |\zeta|^2 \leq  V_\vartheta(\zeta)  \leq  b_2 |\zeta |^2   \\
  \label{eq:discreteVrhop2}
   & V_\vartheta (\vartheta(\zeta)) - V_\vartheta(\zeta)  \leq  -b_3 |\zeta|^2 \\
   \label{eq:discreteVrhop3}
    &| V_\vartheta (\zeta) - V_\vartheta (\zeta') |  \leq  b_4 | \zeta- \zeta' |.(|\zeta| + |\zeta'|).
  \end{align}
  
Similar to \eqref{eq:timetoimpactphase}, we can also define time to impact functions for the PHZD. Denote them as $T_{\vartheta_\dos}, T_{\vartheta_\sis}$ respectively. Denote the total time to impact function on PHZD as $T_{\vartheta}:=T_{\vartheta_\dos} + T_{\vartheta_\sis}$. Similarly denote $T:=T_\dos + T_\sis$, which is obtained iteratively as
\begin{align}
 T (\eta,z) = T_\dos(\eta,z) + T_\sis (\eta_\dos(T_\dos(\eta,z)),z_\dos(T_\dos(\eta,z)))
\end{align}
The time to impact functions have upper and lower bounds. Note the disturbances $d_\vi$,$d_\ei$ were suppressed in the arguments above for ease of notations. Therefore we can obtain constants $\ubar{c}_{t} , \bar c_{t} > 0$ such that
\begin{align}
 \ubar{c}_{t} T^*_{\vi} \leq  T_{\vartheta_\vi}  \leq  \bar c_{t} T^*_{\vi}  \quad \quad \ubar{c}_{t} T^*_{\vi}  \leq  T_{\vi} \leq  \bar c_{t} T^*_{\vi}.
\end{align}
Note that $T^*_\vi$ are the times to impact on the periodic orbit $\Orbit$.
We will state the following Lemma that is required to prove Theorem \ref{thm:mtiss}.

\gap
\begin{lemma}{\it
  \label{lm:mainlemmaphase}
  Let $\Orbit^{\PZ}$ be an exponentially stable periodic orbit of the hybrid zero dynamics under a zero disturbance $\|d\|_{\max}=0$. Given the linear feedback law \eqref{eq:linearfeedbacklawt} that renders the transverse dynamics \eqref{eq:dynamicsetareformfullt} e-ISS in the continuous dynamics, there exist constants $A_1$,$A_2$,$D_1$,$D_2  > 0$ such that for all $(\eta,z) \in \B_* $
  \begin{align}
   \label{eq:firstlemmaphase} | T(\eta,z) - T_\vartheta(y_1,z) | &\leq  A_1 |\eta_2|  + D_1 \|d\|_{\max}  \\
   \label{eq:secondlemmaphase} | \pcare_\PZ (\eta,z) - \vartheta(y_1,z) | & \leq A_2 | \eta_2| + D_2 \|d\|_{\max}
  \end{align}}
  \end{lemma}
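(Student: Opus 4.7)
The plan is to bootstrap both inequalities from three ingredients: (i) the e-ISS estimate on the transverse coordinates from Lemma \ref{lm:eiss}, which controls $|\eta_{2,\vi}(t)|$ along each continuous phase; (ii) a Gronwall-type comparison of the full-order flow $\phi_{t,\vi}$ with the PHZD flow $\phi^\PZ_{t,\vi}$; and (iii) transversality of the periodic orbit to each guard, which yields local Lipschitz continuity of the time-to-impact functions via the implicit function theorem. Throughout, Lipschitz regularity of $(f_\vi, g_\vi)$ follows from Properties \ref{prop:1}--\ref{prop:2}, and the neighborhood $\B_*$ is chosen small enough that all the local constructions are uniform.

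For ingredient (ii), I fix $(\eta, z) \in \B_*$ and compare the full trajectory starting at $\Delta_\sis(\eta, z)$ with the PHZD trajectory starting at $\Delta^\PZ_\sis(y_1, z)$. The initial gap is bounded by a Lipschitz constant of $\Delta_\sis$ times $|\eta_2|$, plus the discrete disturbance $|d_{(\sis,\dos)}|$ controlled by Assumption \ref{assumption:impact}. The transverse error $|\eta_{2,\dos}(t)|$ remains small over any bounded time window by Lemma \ref{lm:eiss}, so a Gronwall argument using Lipschitz continuity of $(f_\dos, g_\dos)$ yields
\begin{align*}
|\phi_{t,\dos}(\Delta_\sis(\eta, z)) - \phi^\PZ_{t,\dos}(\Delta^\PZ_\sis(y_1, z))| \leq C_1 |\eta_2| + C_2 \|d\|_{\max}
\end{align*}
uniformly on $[0, \bar T]$ for some $\bar T > T^*_\dos$. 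An analogous estimate holds for the subsequent $\sis$-phase after composing with $\Delta_\dos$.

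For the time bound \eqref{eq:firstlemmaphase}, transversality of the periodic flow to $\Guard_\vi|_x$ at the fixed point, together with the implicit function theorem applied to the scalar guard-defining equation composed with $\phi_{t,\vi}$, gives local Lipschitz continuity of $T_\vi$ in both the initial state and the disturbance; the same argument applied to the reduced flow gives Lipschitz continuity of $T_{\vartheta_\vi}$. Evaluating these at states that agree in $(y_1, z)$ but differ by $\eta_2$, and summing the contributions from the two domains (using the propagation bound on $|\eta_2|$ through $\Delta_\sis$ plus Lemma \ref{lm:eiss}), produces $|T - T_\vartheta| \leq A_1 |\eta_2| + D_1 \|d\|_{\max}$.

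For the map bound \eqref{eq:secondlemmaphase}, I decompose $\pcare_\PZ(\eta, z) - \vartheta(y_1, z)$ into (a) a flow difference at matched times, bounded by the Gronwall estimate above; (b) a time-mismatch contribution, bounded by a uniform vector-field bound times $|T - T_\vartheta|$ from the previous step; and (c) an impact-map contribution across $\Delta_\dos$, bounded by its Lipschitz constant applied to the accumulated error plus $|d_{(\dos,\sis)}|$. Each piece is linear in $|\eta_2|$ and $\|d\|_{\max}$, and summing gives \eqref{eq:secondlemmaphase}. The main difficulty lies in the bookkeeping across the second continuous phase: after the first impact, the new transverse error must be re-bounded in terms of the original $|\eta_2|$ and $\|d\|_{\max}$ via the composition of Lemma \ref{lm:eiss} with the Lipschitz constant of $\Delta_\sis$, so all constants compose multiplicatively and $\B_*$ must be shrunk enough that the implicit function theorem, the transversality bounds, and the local Lipschitz estimates remain valid uniformly.
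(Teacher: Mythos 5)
Your proposal is correct and follows essentially the same route as the paper: e-ISS bounds on $|\eta_{2,\vi}|$ along each phase, a Gronwall comparison of the actual flow with the PHZD flow propagated through the Lipschitz reset maps and discrete disturbances, Lipschitz continuity of the time-to-impact functions from guard transversality (the paper packages this as the auxiliary function $T_{B,\vi}$, which is the same implicit-function-theorem argument), and the identical three-way decomposition of $\pcare_\PZ - \vartheta$ into a matched-time flow difference, a time-mismatch term bounded by a vector-field bound times $|T - T_\vartheta|$, and an impact-map contribution. No substantive differences.
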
\gap
  \begin{proof}  
  By a slight abuse of notations, we start with the initial condition on the single support phase $(\eta_2,y_1,z) = (\eta_2,\zeta) \in \B_*$, and then observe the time solution $(\eta_{2,\vi}(t),\zeta_\vi(t))$ in each domain for continuous dynamics of the form \eqref{eq:dynamicsetareformfullt}. The goal is to compare the evolution of the resulting trajectory undergoing disturbance with the trajectory of the orbit on PHZD undergoing no external input disturbance. Therefore, for ease of understanding, we will denote the actual trajectory in each domain as $(\eta_{2,\vi}(t),\zeta^a_\vi(t))$ and the base trajectory (of the PHZD) as $(0,\zeta^b_\vi(t))$. With these two trajectories an auxiliary time to impact function $T_B$ was defined in \cite[eqn. (55)]{TAC:amesCLF}, which is reformulated w.r.t. each $\vi\in\Vertex$ here as
  \begin{align}
  & T_{B,\vi}(\mu_1,\mu_2,\zeta_\vi) = \inf \{ t \geq 0 : h_\vi(\mu_1,\zeta^b_\vi(t) + \mu_2) = 0 \} \nonumber 
  \end{align}
  where $h_\vi$ is a smooth function that indicates the guard ``strike'' condition. For the double support phase $h_\dos$ is the vertical ground reaction force, and for the single support phase $h_\sis$ is the height of the swinging foot from ground. Transition to the next domain is triggered when the guard condition $h$ crosses zero. If $\mu_1$,$\mu_2$ are defined as
  \begin{align}
  \mu_1 = \eta_{2,\vi}(t) |_{t = T_\vi}, \quad \mu_2 =  \zeta^a_\vi(t) - \zeta^b_\vi(t) |_{t = T_\vi},
  \end{align}
  then it can be observed that $T_{B,\vi} = T_\vi$. Therefore by the property of Lipschitz continuity of $T_{B,\vi}$, we have
  \begin{align}
  \label{eq:auxiliarytimetoimpact}
  &  | T_{\vi}  - T_{\vartheta_\vi} | \leq L_{B,\vi} ( |\eta_{2,\vi}(T_\vi)| + |\zeta^a_\vi(T_\vi) - \zeta^b_\vi(T_\vi)| ),
  \end{align}
where $L_{B,\vi}$ is the Lipschitz constant. It can be observed that \eqref{eq:auxiliarytimetoimpact} is not dependent on $\epsilon$. In order to find the total time difference \eqref{eq:firstlemmaphase}, the goal is to obtain the norms on the RHS of \eqref{eq:auxiliarytimetoimpact} for each $\vi$ and summing the resulting two inequalities. 
  
  \newsec{First norm in RHS of \eqref{eq:auxiliarytimetoimpact}.} For each $\vi\in\Vertex$, there exist $c_{1,\vi}$, $c_{2,\vi}$, $c_{3,\vi} > 0$, and $D_{1,\vi}>0$ such that \cite[eqn. (21)]{TAC:amesCLF}
 \begin{align}
   | \eta_{2,\vi}( T_{\vi} ) | &= | \eta_{2,\vi}(t) |_{t= T_{\vi} } \nonumber \\
   & \leq   \sqrt{\frac{c_{2,\vi}}{c_{1,\vi}}} \epsilon e^{- \frac{c_{3,\vi}\epsilon}{2 }  T_{\vi}}   |\eta_{2,\vi}(0)| + D_{1,\vi} \|d_\vi\|_\infty \nonumber  \\
\label{eq:intermediatevalueinitialcondition}   & \leq   {\sqrt{\frac{c_{2,\vi}}{c_{1,\vi}}} \epsilon e^{- \frac{c_{3,\vi}\epsilon}{2 } \ubar{c}_t T^*_{\vi}}}   |\eta_{2,\vi}(0)| + D_{1,\vi} \|d_\vi\|_\infty \\
  \label{eq:finalvalueinitlacondiion}   & \leq   C_{1,\vi}   |\eta_{2,\vi}(0)| + D_{1,\vi} \|d_\vi\|_\infty ,   \end{align}
   where $C_{1,\vi}$ is some constant that is not dependent on $\epsilon$ (see \cite[eqns. following (59)]{TAC:amesCLF}). $D_{1,\vi}$ can be obtained via expressions similar to \eqref{eq:lyapunovmainderivative2}, \eqref{eq:etaboundmaadi}. It is important to note that the evolution of the dynamics of $y_{1,\vi}$ can also be derived similarly and omitted for convenience.
   
   Since $(\eta_{\sis}(0), z_\sis(0) )= \ResetMap_\dos (\eta_{\dos}(T_\dos), z_\dos(T_\dos) )$, we can easily establish that 
   \begin{align}
   \label{eq:impactdist1}
   |\eta_{2,\sis}(0)| \leq L_{2,\dos} |\eta_{2,\dos}(T_\dos)| + |d_{(\dos,\sis)}|,
   \end{align}
  where the disturbance $d_{(\dos,\sis)}$ appears due to \eqref{eq:impactuncertainty}.
  $L_{2,\dos}$ is the Lipschitz constant of $\eta_2$ component of $\ResetMap_\dos$. Similarly  
  \begin{align}
  \label{eq:impactdist2}
 |\eta_{2,\dos}(0)| \leq L_{2,\sis} |\eta_{2}| + |d_{(\sis,\dos)}|.
 \end{align}
 
We therefore have that
 \begin{align}
      & | \eta_{2,\sis}( T_{\sis} ) |   \leq  C_{1,\sis} L_{2,\dos} C_{1,\dos} L_{2,\sis} |\eta_2| +  D_\eta \|d\|_{\max}, 
 \end{align}
 after substituting for \eqref{eq:finalvalueinitlacondiion}, \eqref{eq:impactdist1}, \eqref{eq:impactdist2} for each continuous and discrete event.
$D_\eta$ is also obtained accordingly, by grouping and replacing the individual disturbances with $\|d\|_{\max}$.


\newsec{Second norm in RHS of \eqref{eq:auxiliarytimetoimpact}.}
We have the following result (proof for a similar result was shown in \cite{kolathaya2016parameter} for parameter uncertainty). We will drop the vertices $\vi\in\Vertex$ for ease of notations.
 \begin{align}
 \label{eq:integralequatioinzeta}
   |y^a_{1}(t) - y^b_{1}(t)|  \leq  e^{-\frac{\epsilon }{2}t} \:\:\: | & y^a_{1}  (0)  - y^b_{1}(0) | + D_{y}\|d\|_\infty   \nonumber \\
  |z^a(t) - z^b(t)|  \leq  | z^a(0) & - z^b(0) | \nonumber\\ 
			     +  \int_0^t & \left ( \begin{bmatrix}
                              \Psi(y^a_1(t'),\eta_2(t'),z^a(t'))
                              \end{bmatrix} \right. \dots \nonumber \\
                             & \left. - \begin{bmatrix}
						   \Psi(y^b_1(t'),0,z^b(t'))
					      \end{bmatrix}   \right ) dt'  \\
 |\zeta^a(t) - \zeta^b(t)|  \leq  |  y^a_{1}(t) & - y^b_{1}(t)| + |z^a(t) - z^b(t)|, \nonumber
 \end{align}
 where the final values of the states are obtained as a function of the initial values in each domain. $D_y$ is some constant which is obtained due to the fact that the velocity output $y_1^a$ is e-ISS in each continuous dynamics. Note that $y^a_1$,$z^a$ are the components of $\zeta^a$ (similarly for $\zeta^b$). We also know that the initial values $\zeta^a(0)=(y^a_1(0),z^a(0))$, $\zeta^b(0)=(y^b_1(0),z^b(0)) $ depend on the states from the previous domain. Therefore 
 \begin{align}  
 \label{eq:zetainitialdifference}
|\zeta^a_\dos (0) - \zeta^b_\dos (0) | \leq & L_{1,\sis} |\eta_{2}| + |d_{(\sis,\dos)}|   \\
|\zeta^a_\sis (0) - \zeta^b_\sis (0) | \leq & L_{1,\dos} |\zeta^a_{\dos}(T_\dos)-\zeta^b_{\dos}(T_\dos)| \nonumber \\
				      & \qquad + L_{1,\dos} |\eta_{2,\dos}(T_\dos)| + |d_{(\dos,\sis)}|, \nonumber
 \end{align}
 where $L_{1,\sis}$, $L_{1,\dos}$ are the Lipschitz constants. 
 Therefore, we have the following inequality:
 \begin{align}\label{eq:zetadifferencemaadi}
| \zeta^a(t) - \zeta^b(t) |  \leq  &   |  y^a_{1}  (0)  - y^b_{1}(0) | + D_{y} \|d\|_{\max}  \nonumber \\
				     & +  | z^a(0)  - z^b(0) |    \\
				     & + \int_0^t L_q ( |\eta_2(t')| + |\zeta^a(t')  - \zeta^b(t') | ) dt', \nonumber
 \end{align}
 where $L_q$ is the Lipschitz constant of $\Psi$. In \eqref{eq:zetadifferencemaadi}, $|\zeta^a(0)  - \zeta^b(0) |$ can be replaced with \eqref{eq:zetainitialdifference} (based on the domain), and $\eta_2(t')$ can be replaced with \eqref{eq:finalvalueinitlacondiion}, and the final time $t$ can be replaced with the time to impact $T_\vi$. Initial values of $y_1$,$z$at each domain will have
 \begin{align}
  \label{eq:velocityoutputinequality}
  |y^a_{1,\vi} (0)  - y^b_{1,\vi} (0) |  &\leq |\zeta^a_\vi (0) - \zeta^b_\vi (0) | \nonumber \\
  |z^a_{\vi} (0)  - z^b_{\vi} (0) |  &\leq |\zeta^a_\vi (0) - \zeta^b_\vi (0) |,
 \end{align}
 which can be replaced by \eqref{eq:zetainitialdifference}.
 The final result will look like the following for some constants $C_{2,\vi}$, $D_{2,\vi}$ (also see \cite[eqn. (60)]{TAC:amesCLF})
 \begin{align}\label{eq:finalzetadifferencedesctiptions}
  | \zeta^a_\vi(T_\vi) - \zeta^b_\vi(T_\vi) | \leq C_{2,\vi} |\eta_2 | + D_{2,\vi} \|d\|_{\max}.
 \end{align}
The above result and \eqref{eq:finalvalueinitlacondiion} can be substituted in \eqref{eq:auxiliarytimetoimpact} to obtain the final form for \eqref{eq:firstlemmaphase}.

To prove \eqref{eq:secondlemmaphase} define
 \begin{align}\label{eq:psiupperbound}
    C_{3,\vi}  & = \max_{\ubar{c} T^*_\vi \leq T_\vi \leq \bar c T^*_\vi} \left | \Psi_\vi (0,\zeta^{b}_\vi(t)) \right| 
 \end{align}
 Since $\mathbb{P}_\PZ$ is the PHZD-component of the \poincare map $\pcare$, it then follows that
\begin{align}
 | \pcare_\PZ (\eta,z)  - \vartheta(\zeta) |& = | \zeta^a_\sis (T_\sis)  - \zeta^b_\sis (T_{\vartheta_\sis}) | \nonumber \\
					    & \leq | \zeta^a_\sis (T_\sis)  - \zeta^b_\sis (T_{\sis})|  \nonumber  \\
					    & \qquad + \left | \int_{T_{\vartheta_\sis}}^{T_\sis} \begin{bmatrix}
						  -\epsilon y^b_1(t') \\
						   \Psi(0,\zeta^b(t'))
					      \end{bmatrix} dt'\right | \nonumber \\
					      & \leq | \zeta^a_\sis (T_\sis)  - \zeta^b_\sis (T_{\sis})| + \left |\int_{T_{\vartheta_\sis}}^{T_\sis}\epsilon y^b_{1,\sis}(t') dt' \right |\nonumber \\
					      & \qquad  +  C_{3,\vi} |T_\sis - T_{\vartheta_\sis} | .
\end{align}
Since $y^b_{1,\sis}(t)$ is exponentially decaying, the term $\epsilon y^b_{1,\sis}(t)$ has an upper bound independent of $\epsilon$. Similarly, evolution of $\Psi$ on the PHZD has the upper bound \eqref{eq:psiupperbound}. Therefore, substituting \eqref{eq:finalzetadifferencedesctiptions} for the first RHS term above and using the previously proven result \eqref{eq:firstlemmaphase}, establishes \eqref{eq:secondlemmaphase}.
  \end{proof}
 \gap
We will now show the proof of Theorem \ref{thm:mtiss}.

\subsection{Proof of main theorem}

\noindent\begin{proof}[Proof of Theorem \ref{thm:mtiss}]
We start by picking a suitable value of $\epsilon$, as shown by \cite[Theorem 2]{TAC:amesCLF} that yields exponential convergence of the periodic orbit $\Orbit$ under a zero disturbance $\|d\|_{\max}=0$. In order to establish e-ISS of $\Orbit$, it is sufficient to show that the \poincare map $\pcare$ \eqref{eq:poincaremapphase} is e-ISS \cite{veer2017poincare}. Hence, the goal now is to obtain an ISS-Lyapunov function of the form \eqref{eq:ISSdstricter} for the \poincare map.

For the Re-ISS-CLF $V_\epsilon$ (domain subscript $\sis$ is suppressed), denote its reduced Lyapunov function (of only $\eta_2$ coordinates) and restriction to the switching surface by $V_{\epsilon,\eta_2}$. It can be verified that the matrix $P_\epsilon$ can be separated into two block matrices, with the latter being the matrix used to obtain the Lyapunov function $V_{\epsilon,\eta_2}$. 
With these two Lyapunov functions we define the following candidate Lyapunov function:
  \begin{align}
   V_{P} (\eta,z) = V_\vartheta (\zeta) + \sigma V_{\epsilon,\eta_2} (\eta_2)
  \end{align}
defined on $B_*$.
The lower and upper bounds on $V_P$ are
 $$\min \{ b_1,  \sigma c_1 \} |(\eta,z)|^2 , \quad\max \{ b_2,  \frac{\sigma c_2}{\epsilon^2} \} |(\eta,z)|^2 $$
respectively. $c_1 (= c_{1,\sis})$ and $\frac{c_{2}}{\epsilon^2} (= \frac{c_{2,\sis}}{\epsilon^2}) $ are the maximum and minimum eigenvalues of $P_{\epsilon}$. 
Since the \poincare map $\pcare$ can be divided into two components $\pcare_{\eta_2},\pcare_\PZ$, we have
\begin{align}
V_{\epsilon,\eta_2} & (\pcare_{\eta_2}(\eta,z)) \leq \epsilon^2 c_{2,\sis} |\eta_{2,\sis}(T_\sis)|^2, 
 \end{align}
 where \eqref{eq:intermediatevalueinitialcondition} can be substituted to yield the following inequality for some constants $A_3$,$A_4$,$D'_\eta > 0$:
  \begin{align}
 V_{\epsilon,\eta_2} & (\pcare_{\eta_2}(\eta,z))  \leq A_3|\eta_2 | ^2 + A_4 |\eta_2 | \|d\|_{\max} + D'_{\eta} \|d\|^2_{\max},\nonumber
 \end{align}
Note that $A_3$ decreases as $\epsilon$ increases\footnote{This is the idea behind the notion of {\it rapid exponential convergence} (as shown by \cite{TAC:amesCLF}), where a suitable $\epsilon$ is picked in order to ensure that $A_3$ is small enough.}.  
Hence, we have the following:
\begin{align}
V_{\epsilon,\eta_2} (\pcare_{\eta_2} & (\eta,z)) - V_{\epsilon,\eta_2} (\eta_2) \nonumber \\
 & \leq  A_3 |\eta_2 |^2 + A_4 |\eta_2 | \|d\|_{\max} + D'_{\eta} \|d\|^2_{\max} - c_1 | \eta_2 |^2 \nonumber	
\end{align}
We also have the following by using \eqref{eq:secondlemmaphase}:
\begin{eqnarray}
| \pcare_\PZ( \eta,z) | &=& |\pcare_\PZ(\eta,z) -\vartheta(\zeta) + \vartheta(\zeta) - \vartheta(0) | \nonumber \\
                      & \leq & A_2 |\eta_2 | + D_2 \|d\|_{\max} +  L_\vartheta |\zeta| ,
\end{eqnarray}
where $L_\vartheta$ is the Lipschitz constant of $\vartheta(\zeta)$. From \eqref{eq:discreteVrhop3}
\begin{align}
\label{eq:diffVrho1p}
 V_\vartheta (\pcare_\PZ( \eta,z)) - & V_\vartheta (\vartheta(\zeta))    \leq  b_4 ( A_2 |\eta_2 | + D_2 \|d\|_{\max} )   \\
& (A_2 |\eta_2 | + D_2 \|d\|_{\max} + (L_\vartheta + N \xi ) |\zeta|). \nonumber 
\end{align}
It follows that 
\begin{eqnarray}
V_\vartheta (\pcare_\PZ( \eta,z)) - V_\vartheta (\zeta) & =  &  V_\vartheta (\pcare_\PZ( \eta,z)) - V_\vartheta (\vartheta(\zeta)) \nonumber \\
										& &	+ V_\vartheta(\vartheta(\zeta)) - V_\vartheta(\zeta) ,
\end{eqnarray}
and the expressions in \eqref{eq:diffVrho1p} and in \eqref{eq:discreteVrhop2} can be substituted. Combining the entire Lyapunov function we have
\begin{align}
\label{eq:finaldiffp}
V_P ( \pcare(\eta,z) ) - V_P (\eta,z) \leq  - \left [ \begin{array}{c}
                                                         | \eta_2 | \\
                                                         |\zeta| \\
                                                         \|d \|_{\max}
                                                        \end{array} \right ]^T \Lambda_{\HybridSystem} \left [ \begin{array}{c}
                                                         | \eta_2 | \\
                                                         |\zeta|  \\
                                                         \|d \|_{\max} \end{array} \right ] \nonumber
\end{align}
where the symmetric matrix $\Lambda_{\HybridSystem} \in \R^{ 3 \times 3} $ is similar to the formulation given in \cite[eqn. $(92)$]{kolathaya2016parameter}:
\begin{align}
a_1 = \Lambda_\HybridSystem(1,1) &= \sigma(c_1 - A_3) - b_4 A_2^2 \nonumber \\
a_2 = \Lambda_\HybridSystem(1,2) &= -\frac{b_4 A_2}{2} (L_\vartheta + N \xi ) \nonumber \\
a_3 = \Lambda_\HybridSystem(1,3) &= \frac{-b_4 A_2 D_2 - \sigma A_4}{2}  \nonumber \\
a_4 = \Lambda_\HybridSystem(2,2) &= b_3 \nonumber \\
a_5 = \Lambda_\HybridSystem(2,3) &=  -\frac{b_4 D_2}{2} (L_\vartheta + N \xi ) \nonumber \\
a_6 = \Lambda_\HybridSystem(3,3) &=  -b_4 D_2^2 - \sigma D_\eta. 
\end{align}
Rest of the proof is similar to \cite[eqns. $(93)$ to $(96)$]{kolathaya2016parameter}, where the following inequality is obtained:
\begin{align}
\label{eq:ISSLDerivative}
V_P ( \pcare( & \eta,z) )  - V_P (\eta,z)  \\ 
 & \leq  - \frac{A_5}{2} |(\eta,z)|^2 + \left ( \frac{A_6^2}{A_5} + b_4 D_2^2 + \sigma D_\eta \right ) \|d\|^2_{\max}, \nonumber
\end{align}
where
\begin{align}
A_5 & = \lambda_{\min}\left ( \begin{bmatrix}
								a_1 & a_2 \\ a_2 & a_4
								\end{bmatrix} \right ) \nonumber\\
A_6 & = b_4 A_2 D_2 + \sigma A_4 + b_4 D_2 (L_\vartheta + N \xi), \nonumber
\end{align}
where the positivity of $A_5$ is ensured by picking sufficiently large enough $\sigma$.
It can be verified that \eqref{eq:ISSLDerivative} is of the form \eqref{eq:ISSdstricter}, thereby establishing e-ISS. The ultimate bound on $(\eta,z)$ can be easily obtained when \eqref{eq:ISSLDerivative} is changed to an equality. Therefore, we can pick an appropriate $\delta>0$ in order to ensure that $(\eta,z)$ is well inside $\B_*$ (for example, see \cite[eqn. (97)]{kolathaya2016parameter}).
 \end{proof}
\gap


%

\section{Results and conclusions}
\label{sec:results}

For verification of the results presented in this work, walking controllers demonstrating ISS are implemented on DURUS in both simulation and experiment. DURUS consists of fifteen actuated joints and one linear passive spring at the end of each leg. The generalized coordinates of the robot are described in \cite{kolathaya2016time} and the continuous dynamics of the bipedal robot is given by \eqref{eq:eom-general}. The nominal walking gait considered has two phases: single support, and double support, as shown in \figref{fig:hs}. A stable reference walking gait is obtained via an offline optimization algorithm \cite{Hereid_etal_2016}. 
\begin{figure}[h!]
	\includegraphics[width=0.98\columnwidth]{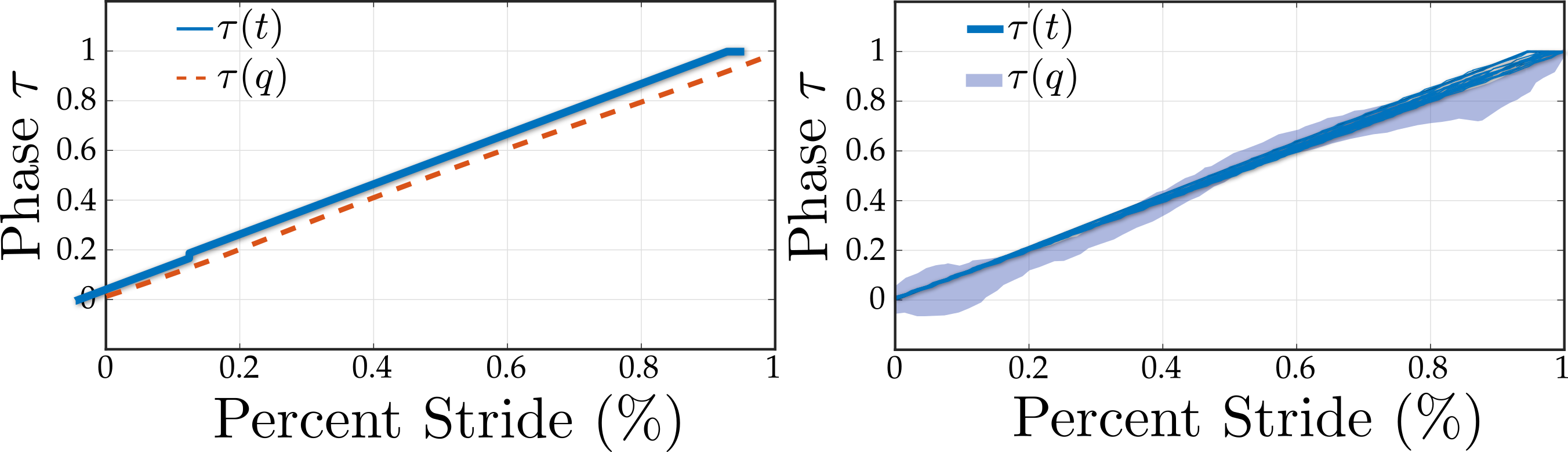} 
	\caption{Phase variable comparison between simulation (left) and experiment (right).}
	\label{fig:resultstau}
\end{figure}

\newsec{Outputs.} The subscripts $a,k,h,w$ represent ankle, knee, hip and waist respectively; while $r,p,y$ represent roll, pitch, and yaw. Therefore, $\q_{rkp}$ would indicate right knee pitch angle and $\q_{lhr}$ would indicate left hip roll angle. With these notations, we define a relative degree one output as
 $y^1(\q) = \delta \dot{p}_{hip}(\q) - v_d$,
where $\delta p_{hip}(\q)$ is the linearized hip position,
\begin{align}
  \label{eq:linearized-hip-position}
  \!\! \delta p_{hip}(\q)\!\!=\!\!l_a \q_{rap} + (l_a + l_c) \q_{rkp} + (l_a + l_c +  l_t) \q_{rhp},
\end{align}
with $l_a$, $l_c$, and $l_t$ the length of ankle, calf, and thigh link of the robot respectively. $v_d$ is a constant desired velocity. The relative degree two outputs are defined in the following (assuming left leg is the stance leg):
\begin{itemize}
  \item knee pitches: $\q_{rkp},\q_{lkp}$
  \item torso pitch: $-\q_{lap}-\q_{lkp}-\q_{lhp}$ 
  \item torso roll: $-\q_{rar}-\q_{rhr}$
  \item ankle roll:  $\q_{lar}$ 
  \item hip yaw:  $\q_{lhy}$ 
  \item waist: $ \q_{wr},\q_{wp},\q_{wy}$
  \item nonstance slope: $-\q_{lap} - \q_{lkp} -   \q_{lhp} + \frac{l_c}{l_c + l_t}\q_{rkp} + \q_{rhp}$ 
  \item leg roll: $ p^h(\q) - p^y(\q)$
  \item nonstance foot: $p^x,p^y,p^z$
\end{itemize}
where $p^h(\q)$ is the $y$ position of the right hip, and $p^x(\q),p^y(\q),p^z(\q)$ are the $x,y,z$ positions of the nonstance foot respectively. These $14$ outputs are denoted together as $y^a_2(\q)$. 
The outputs of the system are then defined as
\begin{align*}
	y(q) &= \underbrace{\begin{bmatrix} y^a_1 \\ y^a_2 \end{bmatrix}}_{y^a(q)} - \underbrace{\begin{bmatrix} v_d \\ y_2^d(\tau,\alpha) \end{bmatrix}}_{y^d(\tau, \alpha)},
\end{align*}
where the desired output functions are parameterized by the phase variable $\tau(\q)$,
defined as 
\begin{align}
 \label{eq:phasevariable}
\tau(\q):= \frac{\delta p_{hip}(\q) - \delta p_{hip}(\q^+)}{v_d},
\end{align}
\begin{figure*}[t!]
	\includegraphics[width=1\textwidth]{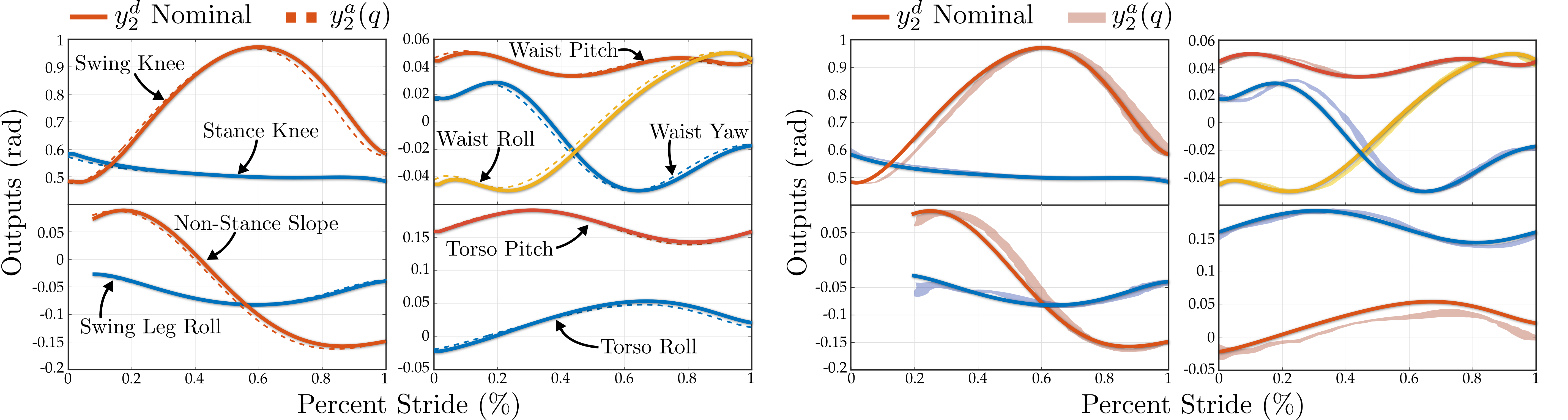} 
	\caption{Figures showing the simulation and experimental results from the beginning to end of a step. The desired outputs are obtained via B\'ezier polynomials parameterized by the linearized hip position \eqref{eq:phasevariable}.}
	\label{fig:resultssimexp}
\end{figure*}
\begin{figure*}[h!]\centering
	\includegraphics[width=0.14\textwidth]{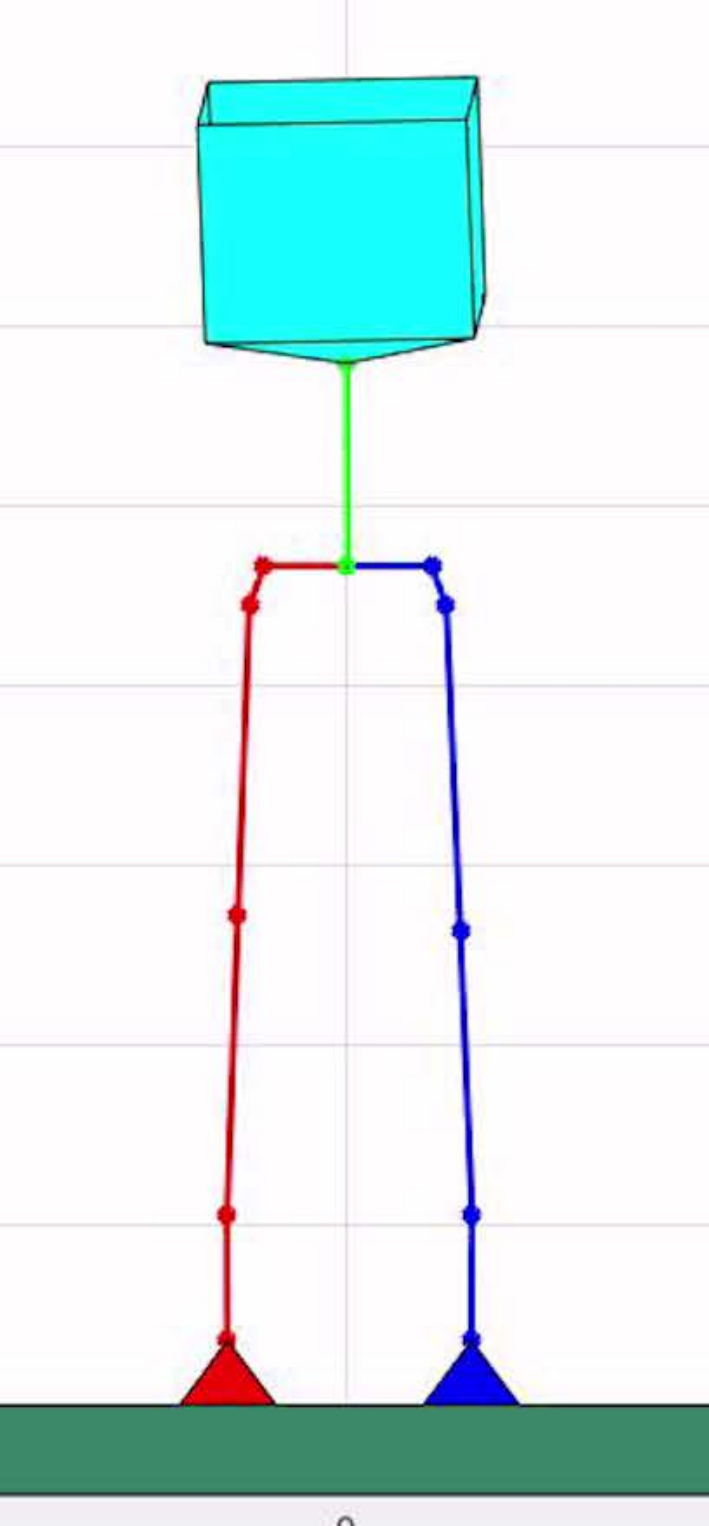}\hspace{-1mm}
	\includegraphics[width=0.14\textwidth]{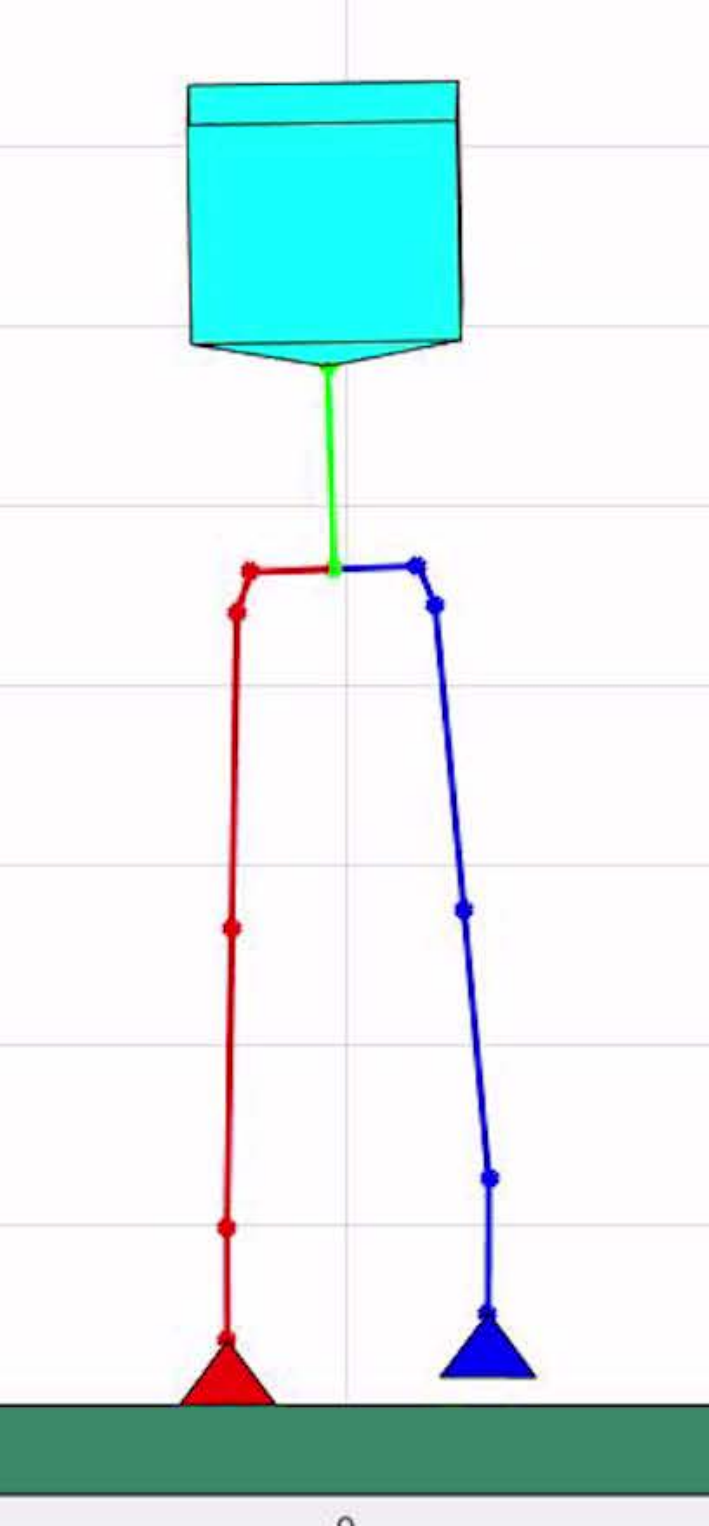}\hspace{-1mm}
	\includegraphics[width=0.14\textwidth]{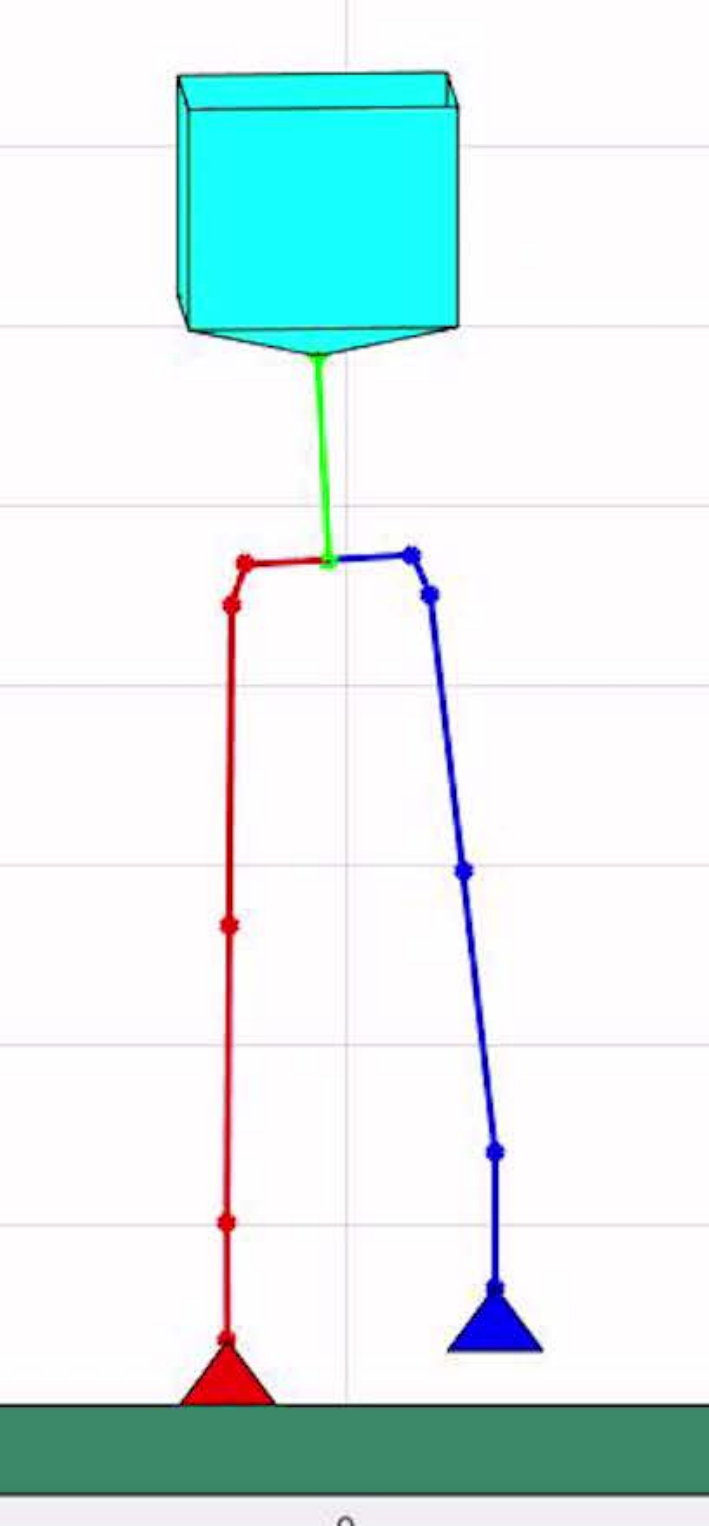}\hspace{-1mm}
	\includegraphics[width=0.14\textwidth]{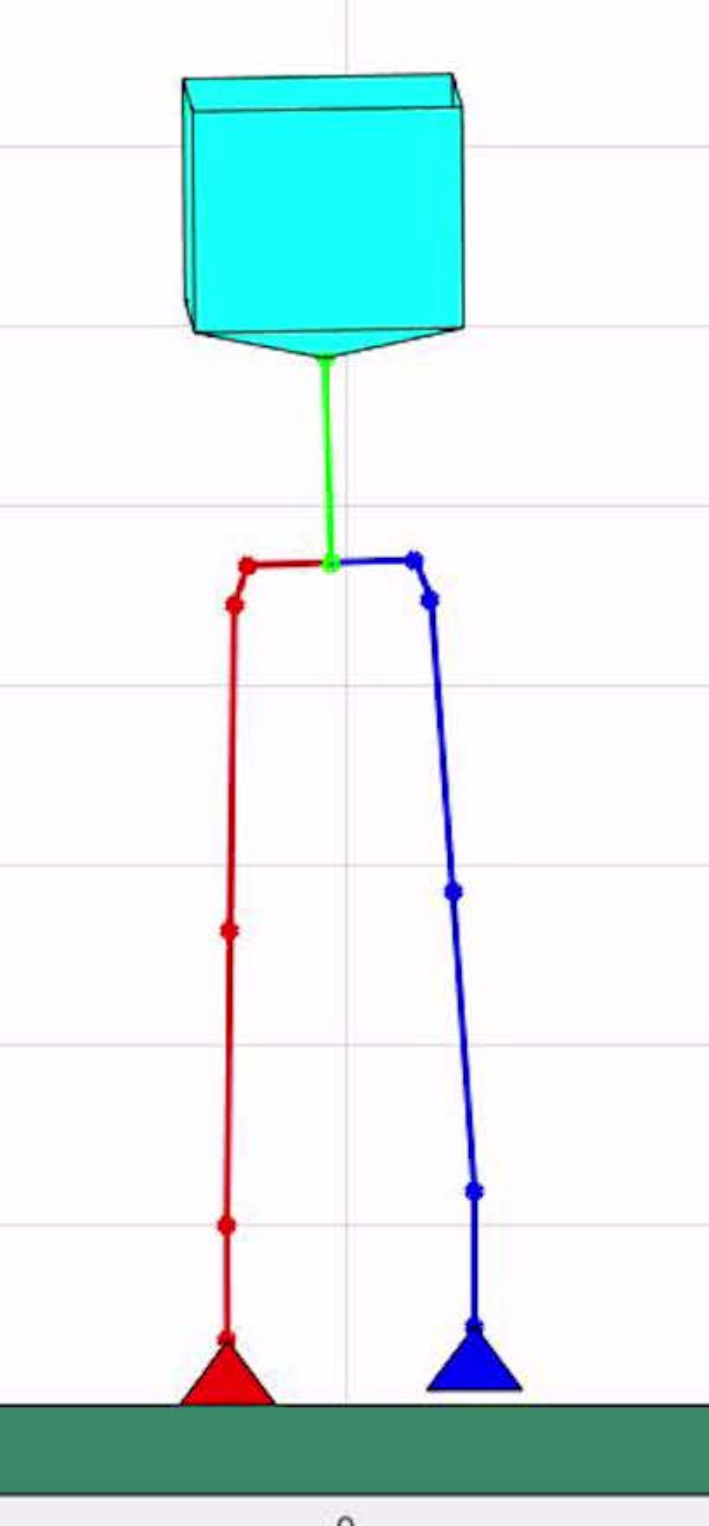}\hspace{-1mm}
	\includegraphics[width=0.14\textwidth]{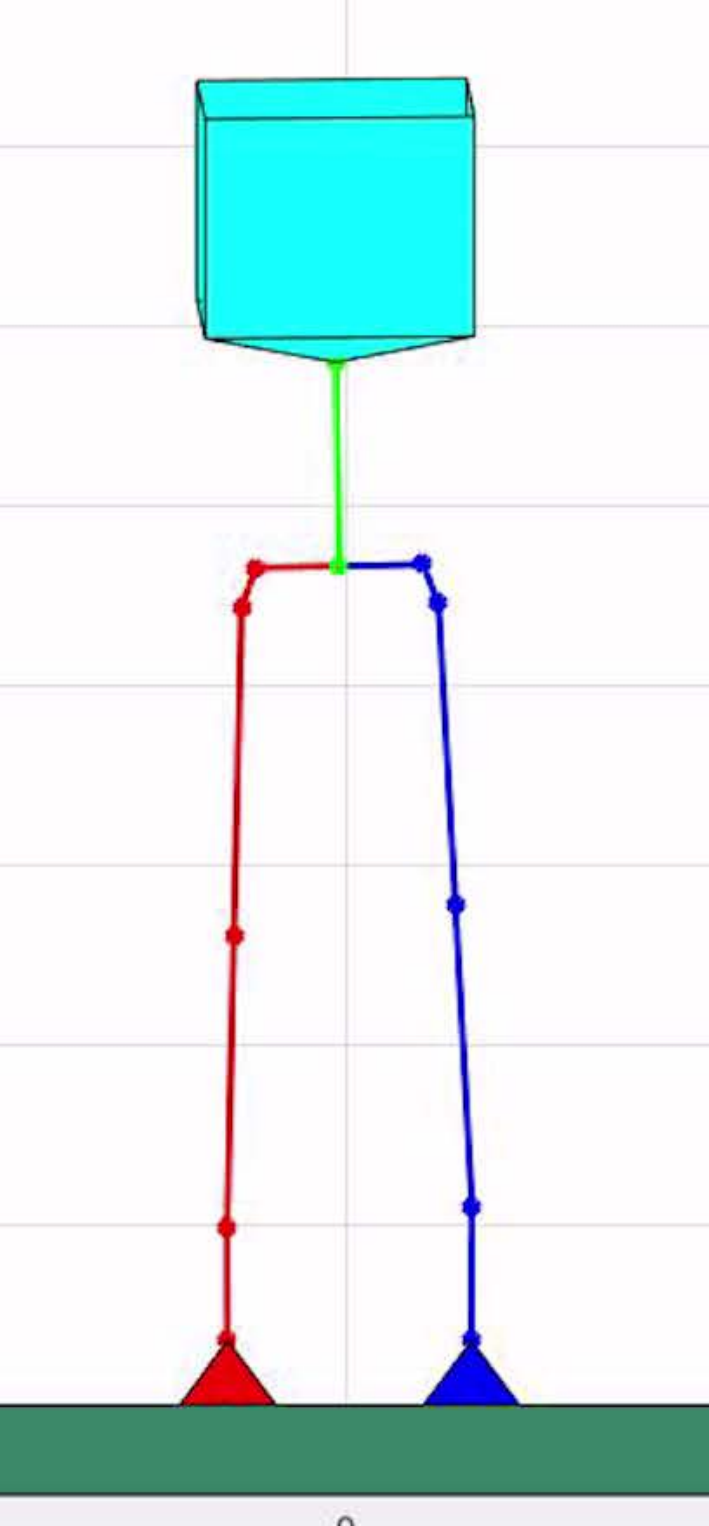}\hspace{-1mm}
	\includegraphics[width=0.14\textwidth]{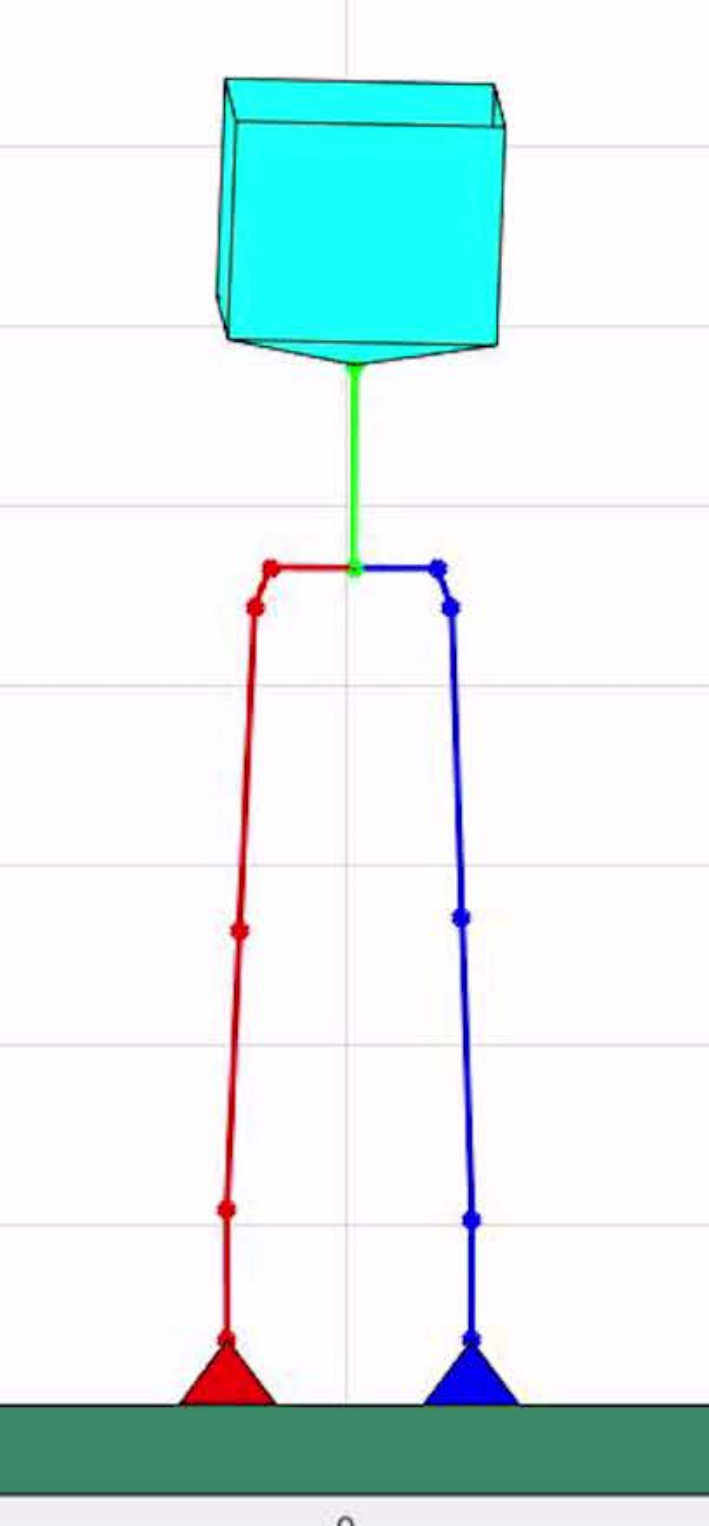}\\
	\includegraphics[width=0.14\textwidth]{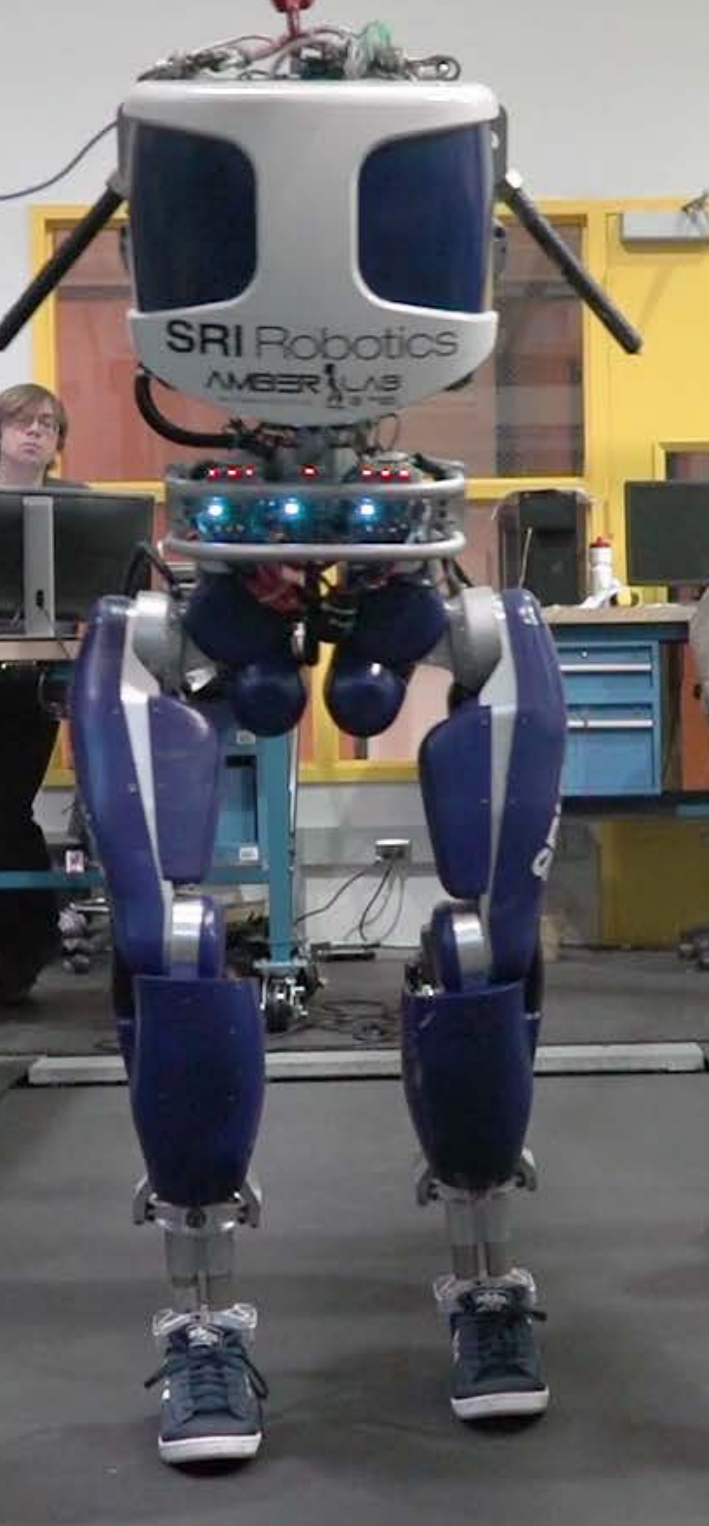}\hspace{-1mm}
	\includegraphics[width=0.14\textwidth]{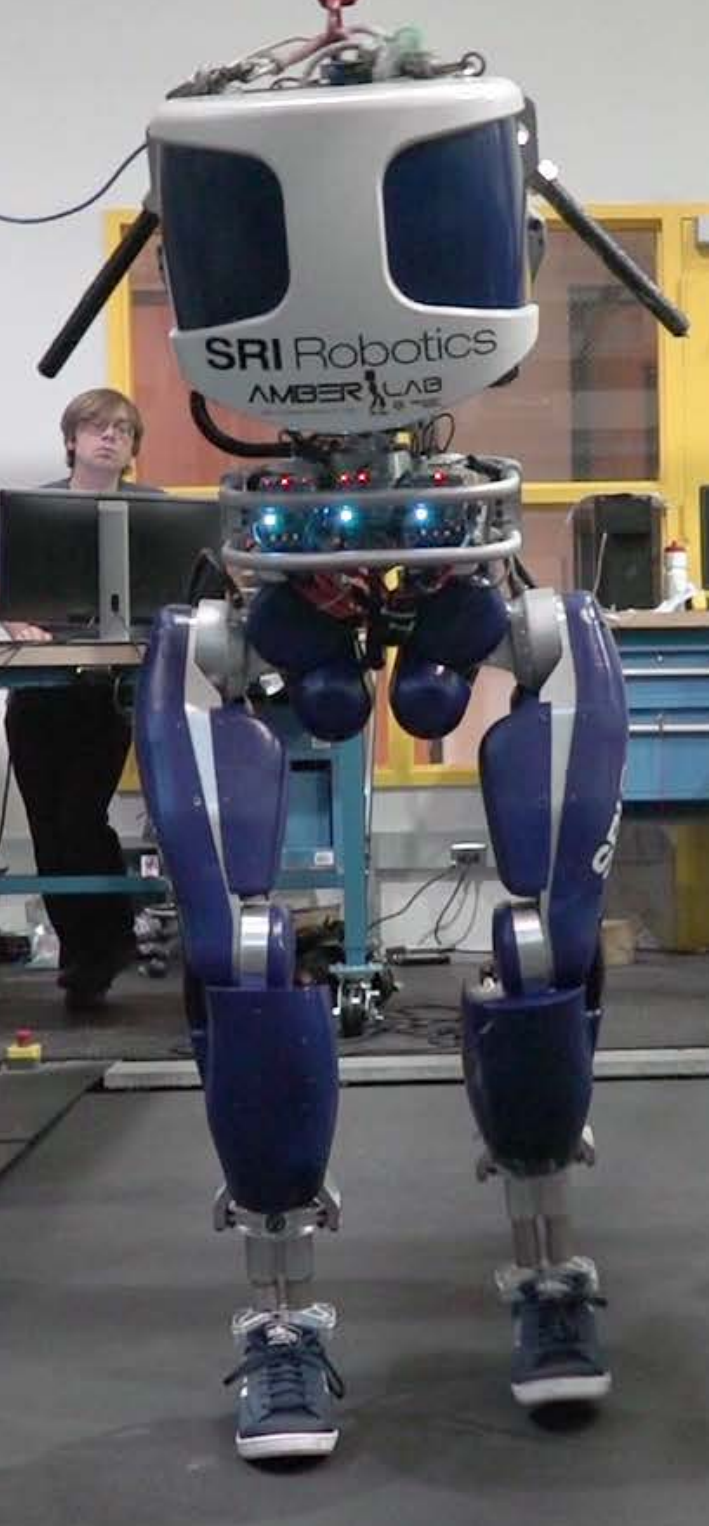}\hspace{-1mm}
	\includegraphics[width=0.14\textwidth]{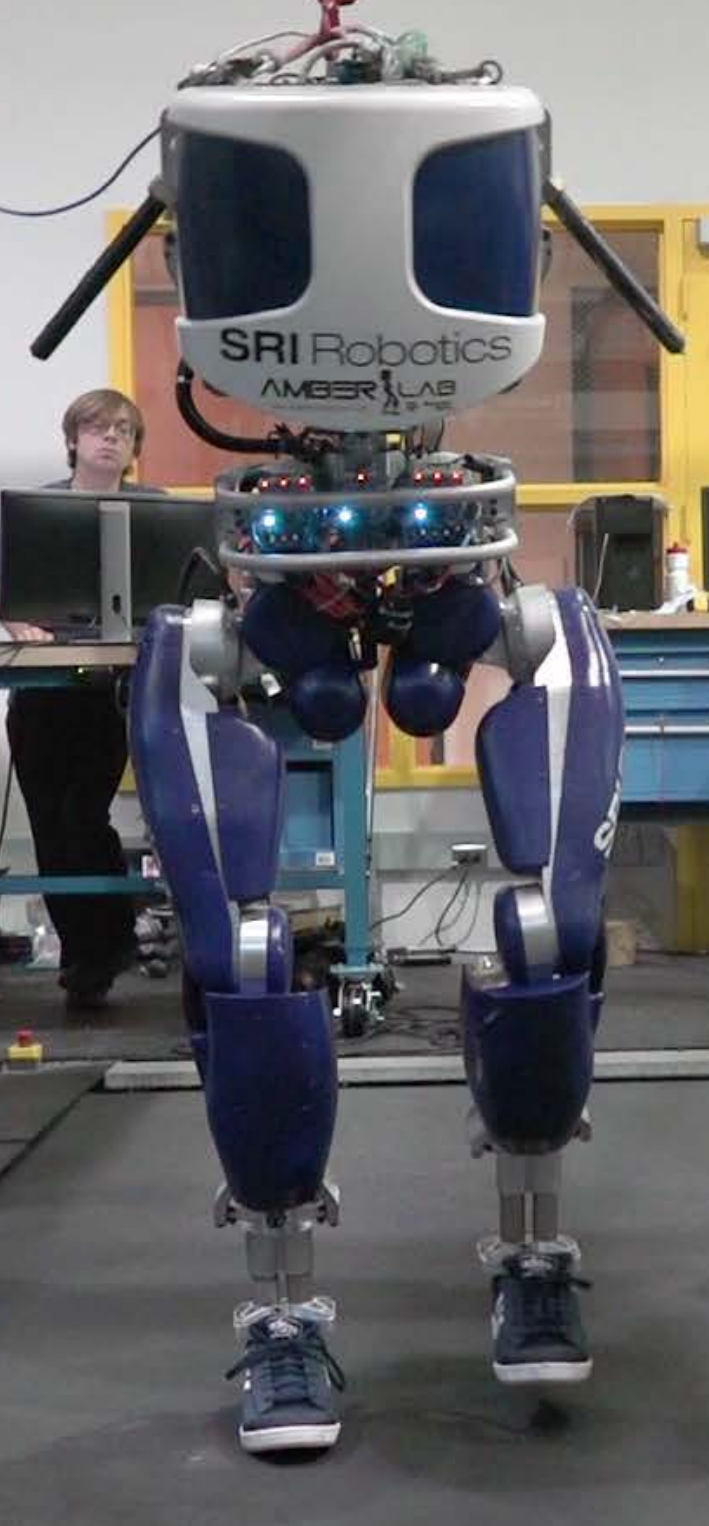}\hspace{-1mm}
	\includegraphics[width=0.14\textwidth]{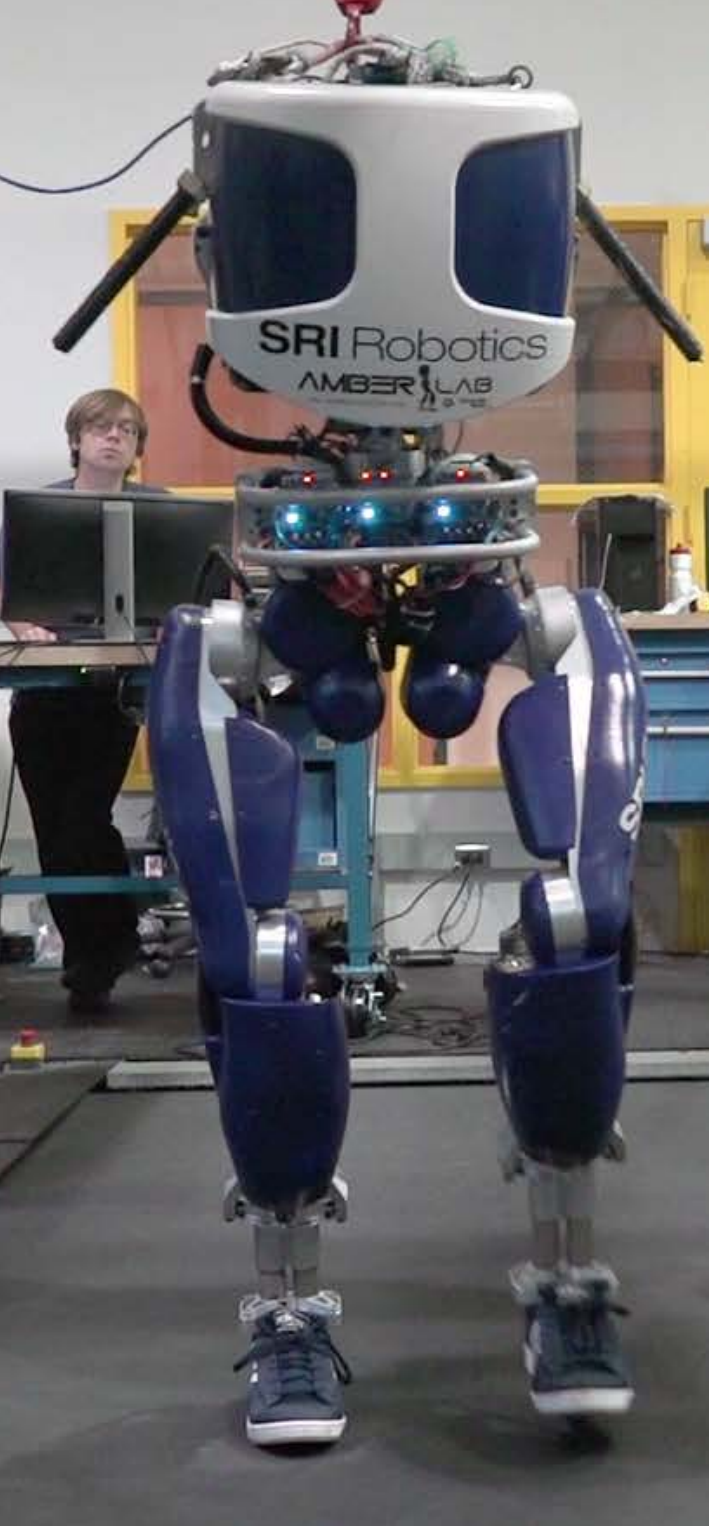}\hspace{-1mm}
	\includegraphics[width=0.14\textwidth]{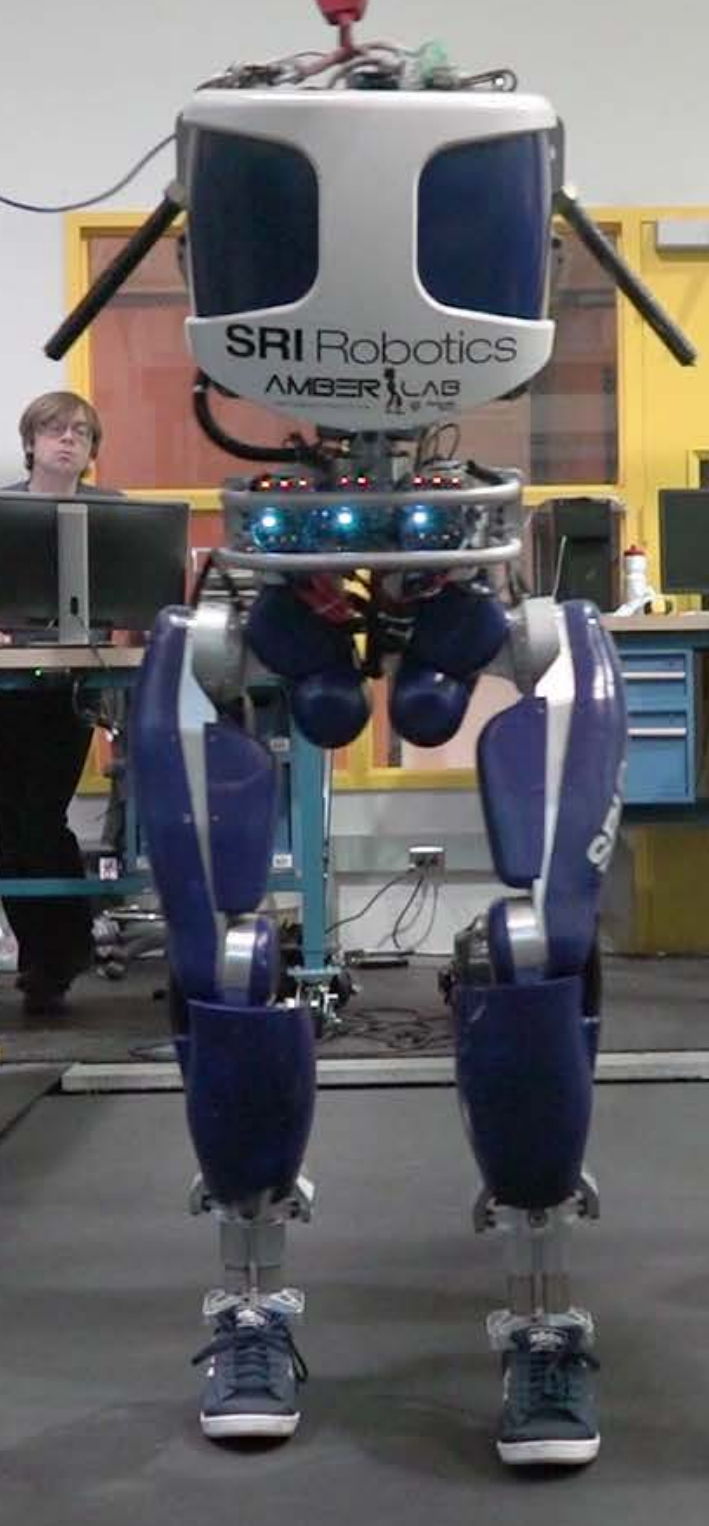}\hspace{-1mm}
	\includegraphics[width=0.14\textwidth]{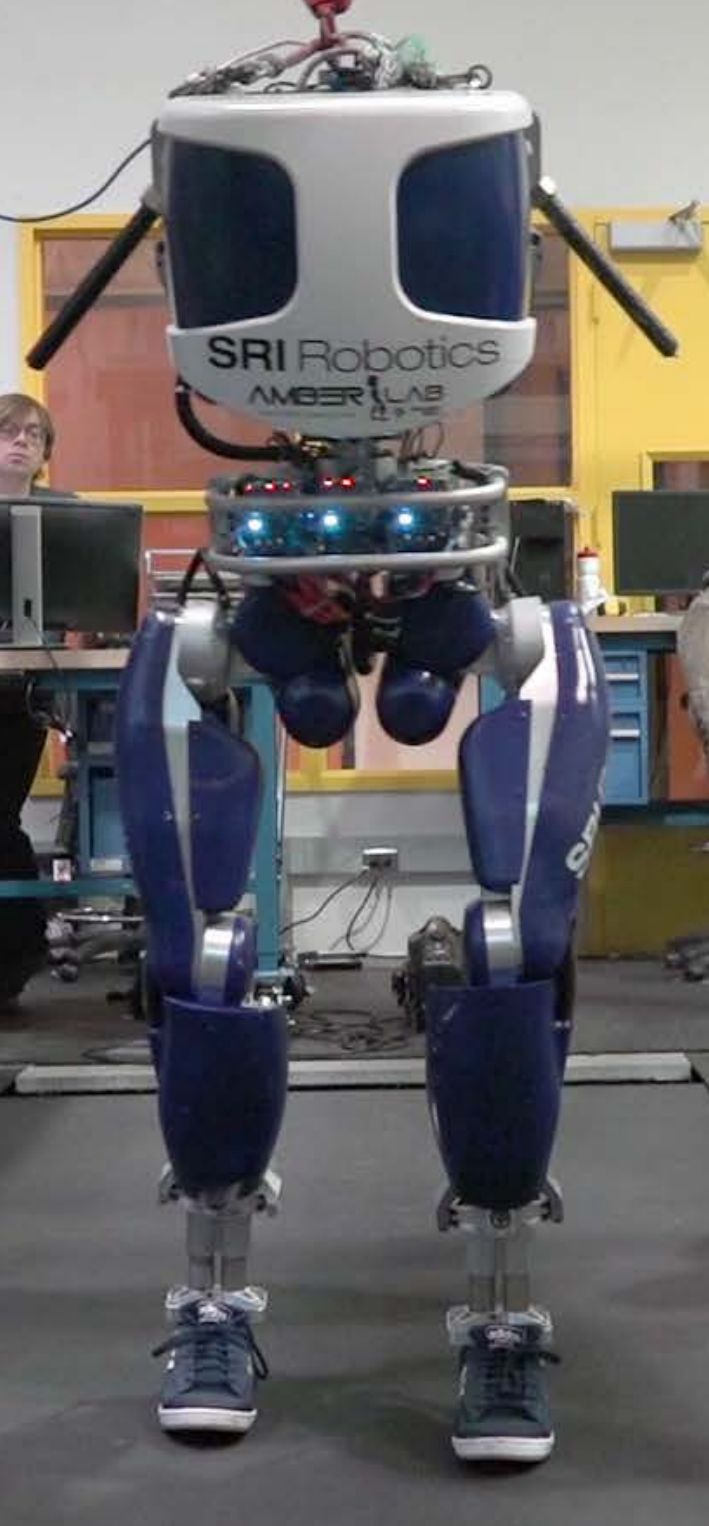}
	\caption{Figure showing the gait tiles for one step for simulation (top) and experiment (bottom).}
	\label{fig:tiles}
\end{figure*}
with $\q^+$ the post-impact configuration of the robot.
Depending upon the domain, some of the outputs are either included or omitted from the output vector to avoid singularity. As observed in \figref{fig:resultssimexp}, the nonstance slope and swing leg roll are not included in the vector $y^a_2$ during the double support ($\dos$) phase. 


\begin{figure}[h!]
	\includegraphics[width=0.98\columnwidth]{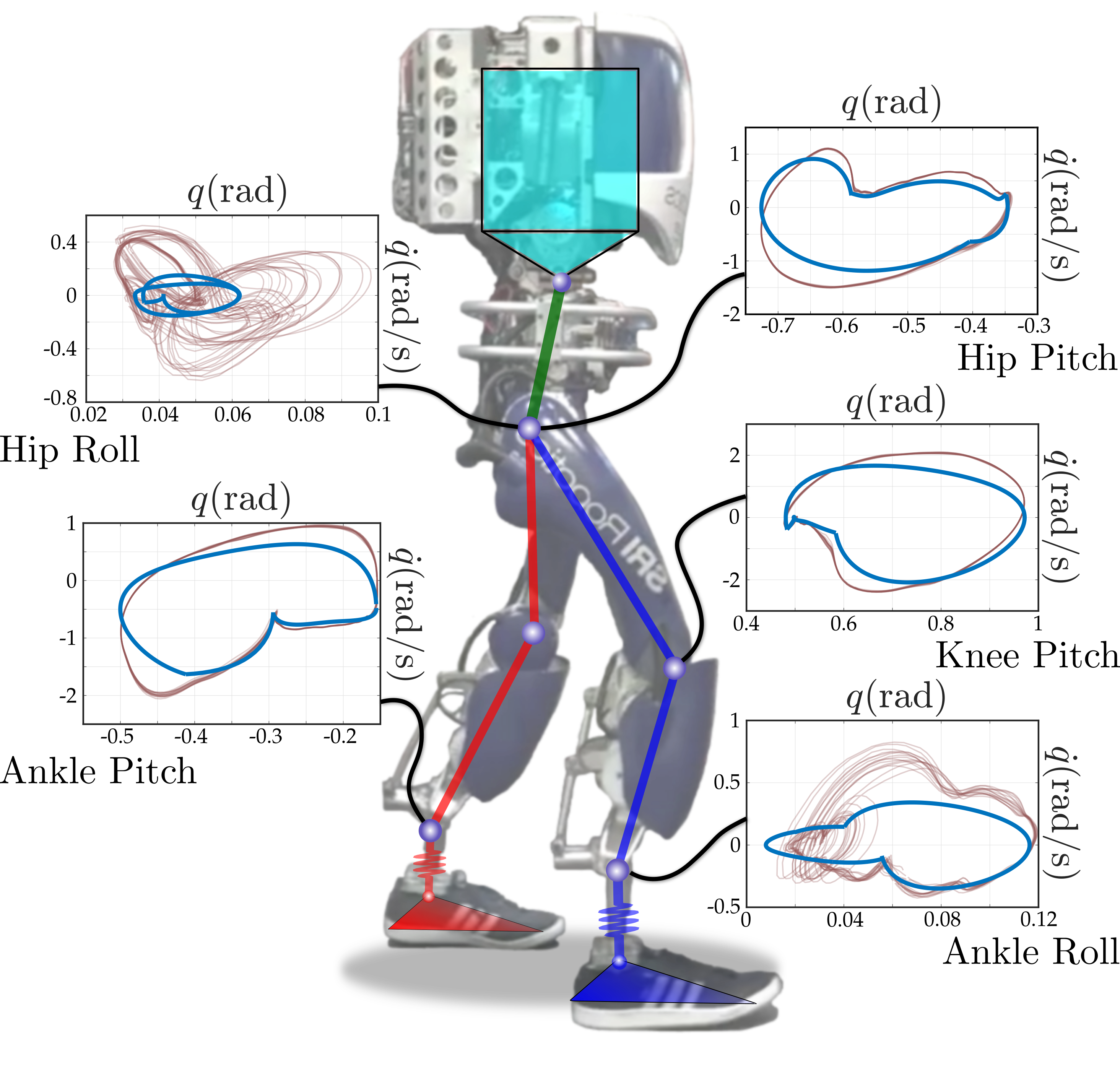} 
	\caption{Periodic orbits of the select joints are shown. Blue waveforms are from the simulation and red waveforms are from the experiment (30 steps).}
	\label{fig:resultsexpPO}
\end{figure}

\newsec{Control law.} 
Given the choice of the outputs, the goal is to obtain a control law that drives these outputs to ``practically'' small values (if not $0$). We choose time based parameterization of the outputs along with PHZD reconstruction to obtain the desired state $(q^t_d,\dq^t_d)$ (see Section \ref{sec:PHZDrecons}). This is utilized to derive the feedback control law \eqref{eq:linearfeedbacklawt} in the robot.


\begin{figure}[ht!]
\centering
	\includegraphics[width=0.7\columnwidth]{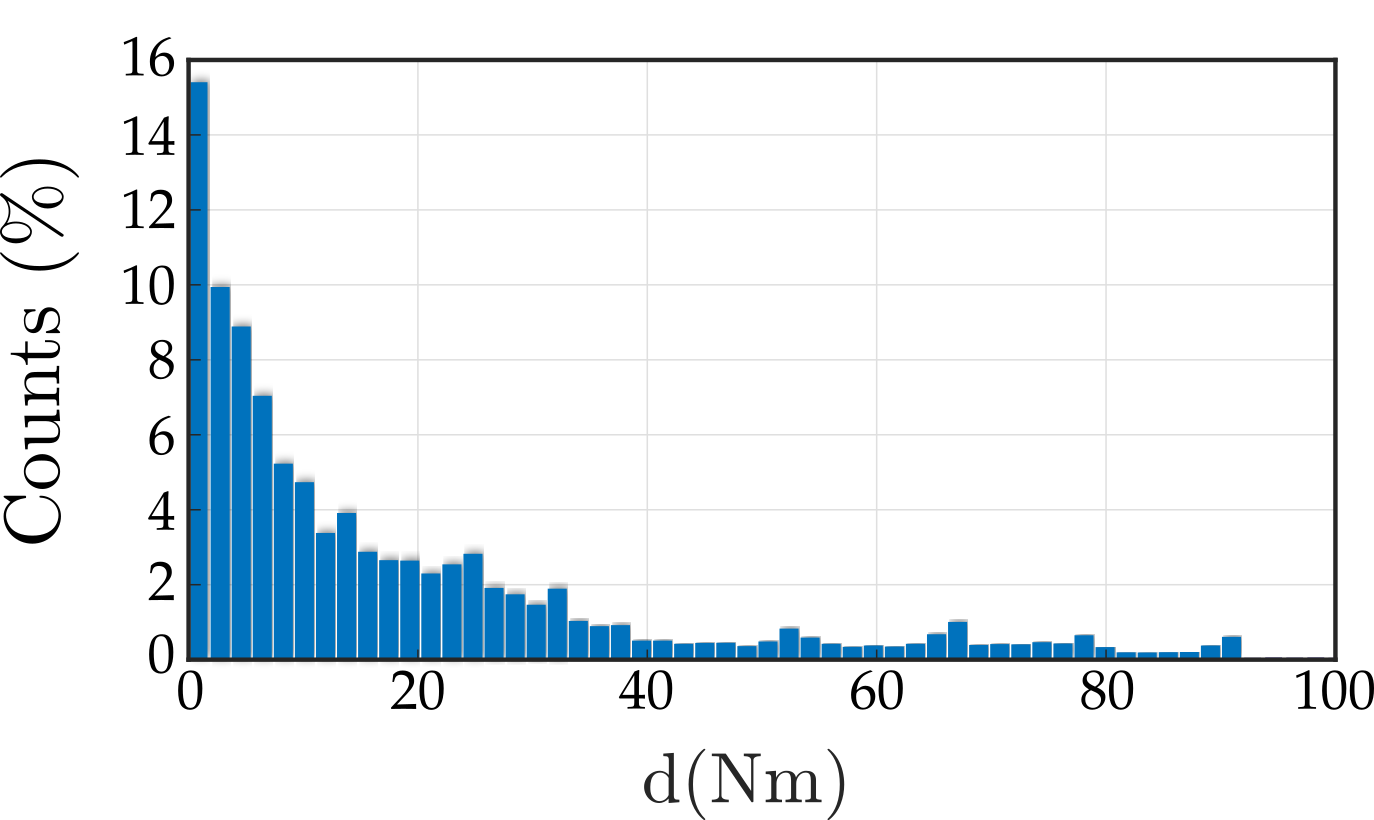} 
	\caption{The histogram of the disturbance input is provided here. High torque disturbances are very rare and the values usually stay within $40$ Nm.}
	\label{fig:resultsdist}
\end{figure}

Because the output combinations chosen for this behavior are nonlinear, the inverse diffeomorphism is reconstructed via an inverse kinematics solver utilizing a Jacobian pseudoinverse. 
The desired states are then communicated to the PD controllers \eqref{eq:linearfeedbacklawt} for individual joint level tracking. The PD gains at the joint level are manually tuned to reach the desired performance. 
The simulation and experimental results are shown in Figs. \ref{fig:resultstau} - \ref{fig:resultsdist} and an experimental video is provided in \cite{videodurus2017isswalking}. The comparison between the time and state based phase variables are shown in \figref{fig:resultstau}. \figref{fig:resultssimexp} shows the comparison between $9$ actual and desired outputs for both simulation (left) and experiment (right). The simulation figures are recorded for one step while the experiment figures are recored for 30 steps of the robot. The remaining 5 outputs are not shown since they do not vary significantly. \figref{fig:resultsexpPO} shows the phase portraits for $5$ outputs of the robot, and \figref{fig:tiles} shows the gait tiles for one step.
\figref{fig:resultsdist} is the histogram of the disturbance input ($u^t_{\rm{PD}} - u_{\rm{IO}}$) observed in simulation. It can be seen that the disturbance values can go as high as $100$ Nm. Note that disturbance input evaluation in the experiment is not possible due to lack of model information. Given this input deviation (disturbance), the maximum deviation of the walking gait from the nominal gait observed in simulation is close to $0.15$ rad and that observed in experiment is $0.5$ rad. 

\newsec{Conclusions.} In this work, it was shown how to obtain robust walking controllers via the notion of input to state stability (ISS) for the humanoid robot DURUS.
With this construction, we obtained the class of input to state stabilizing (ISSing) controllers that yields stable and robust hybrid periodic orbits. It is important to note that ISSing controllers can complement other robust control approaches. In fact, a wide variety of controllers like sliding mode, Lyapunov backstepping, model predictive control themselves yield ISS in a wide variety of robotic systems. The major advantage in ISSing controllers is the extensive analysis involved in minimizing output perturbations for a given uncertainty. Future research will involve utilizing this approach to realize more complex locomotion behaviors like 3D running and dancing.




 \begin{appendices}
  \section{PD Tracking in Continuous Robotic Systems}
\label{app:eisspdcontrol}

We will establish that PD control for continuous robotic systems
yield e-ISS.
For a detailed stability analysis for all kinds of PD based controllers see \cite{wen1988new} (especially see Table $1$ in pages $1382$-$1383$). We have the actual configuration $\q $ and the desired configuration $q_d(t)$, which is a function of time only. For the case where $m < n$ (underactuation), we can pick the passive desired angle to be equal to the corresponding actual angle itself. Therefore, irrespective of the degree of actuation, we can obtain constant diagonal matrices $K_p$, $K_d$ in order to apply the PD control law
\begin{align}
 D(\q) \ddq + C(\q,\dq) \dq + G(\q) =  - K_p (q - q_d) - K_d (\dq - \dq_d), \nonumber
\end{align}
where the holonomic constraints are removed (compared to \eqref{eq:eom-general}), the matrix $H$ is split into Coriolis-centrifugal $C(\q,\dq) \in \R^{n \times n}$ (not to be confused with the constants $C_{\vi}$ used in the main body of the paper) and gravity $G(\q)\in\R^n$ matrices. Note that by Property \ref{prop:1}, the Coriolis-centrifugal matrix is bounded by $|C(\q,\dq)| \leq c_c |\dq|$, and the gravity vector is bounded by $|G(\q)| \leq c_c$.

Denote $e(\q,t): = q - q_d(t)$, and therefore we have
\begin{align}
 D(\q) \ddot{e}  = - C(\q,\dq) \dq - G(\q) - K_p e - K_d \dot e - D(\q) \ddot q_d.
\end{align}
We make the following assumptions about the reference trajectories:
\gap
\begin{assumption}
The desired angles $q_d$, velocities $\dq_d$ and accelerations $\ddq_d$ are all bounded by some $\kappa_q>0$.
\end{assumption}
\gap
Consider a Lyapunov candidate (motivated by strict Lyapunov functions in \cite{4790013})
\begin{align}
   V(e,\dot e,\q)  & = V_0(e,\dot e,\q) +  V_{c} (e,\dot e) \nonumber\\
 V_0 (e,\dot e,\q) & = \frac{1}{2}\begin{bmatrix}
			e \\ \dot e 
		       \end{bmatrix}^T \begin{bmatrix}
                                    K_p & \0 \\
                                    \0 & D(\q)
                                   \end{bmatrix} \begin{bmatrix}
						  e \\ \dot e 
						\end{bmatrix} \nonumber\\
 V_{c} (e,\dot e,\q) & =   \kappa(e) e^T D(\q) \dot e,
\end{align}
where $\kappa$ is given by
\begin{align}
	\kappa(e) = \frac{\kappa_0}{1 + |e|} = \frac{\kappa_0}{1 + \sqrt{e^T e}}. 
\end{align}
A sufficiently small choice of $\kappa_0$ makes $V = V_0 + V_c$ positive definite \cite{210795}. For example,
\begin{align}
	\kappa_0 \leq \frac{(\|K_p\|\|D\|)^{\frac{1}{2}}}{\|D\|}.
\end{align}
Taking the derivative of $\dot V_0$ and collecting the terms yields the following (see \cite{210795})
\begin{align}
\label{eq:v0IOSS}
 \dot V_0 & = - \dot e^T K_d \dot e  - \dot e^T C \dot q_d - \dot e^T G - \dot e^T D \ddot q_d, \nonumber \\
	  & \leq - \dot e^T K_d \dot e  + c_c \kappa_q |\dq| |\dot e| + c_c |\dot e| + \bar c_d \kappa_q |\dot e| \nonumber \\
	  & \leq - \dot e^T K_d \dot e  + c_c \kappa_q |\dot e|^2 + c_c \kappa^2_q  |\dot e| + c_c |\dot e| + \bar c_d \kappa_q |\dot e|
\end{align}
The above equation \eqref{eq:v0IOSS}, in fact, satisfies the conditions for quasi input to state stability (qISS) property \cite{angeli1999input}. By letting equal gains across all joints, we have $k_d:=\|K_d\|$, and thus \eqref{eq:v0IOSS} can be reformulated as
\begin{align}
\dot V_0 \leq - \frac{1}{2}(k_d - c_c \kappa_q) |\dot e  |^2 + \frac{(c_c \kappa^2_q  + c_c + \bar c_d \kappa_q )^2}{k_d - c_c \kappa_q},
\end{align}
where we have used the inequality $ - v^2 + vw \leq - \frac{1}{2}v^2 + w^2$. It can be observed that $V_0$ is decreasing for sufficiently large $|\dot e|$. There are no guarantees on the boundedness of $V_0$, but it can be verified that $|\dot e(t)|$ is bounded after a sufficiently long enough time. 

In a similar fashion, we take the derivative of $V_c$
\begin{align}
\label{eq:vcdot}
 \dot V_c & = \kappa \dot e^T D \dot e + \kappa e^T \dot D \dot e + \kappa e^T D \ddot e  + \dot \kappa e^T D \dot e 
\end{align}
The first term in the addendum can be merged with $V_0$, the next two terms reduce to the following
\begin{align}
	\kappa e^T \dot D \dot e  + \kappa e^T &( - C \dot q - G - K_p e - K_d \dot e - D\ddq_d) \nonumber \\ 
							 \leq &\kappa e^T (\dot D  - C) \dot e - \kappa e^T C \dq_d + \kappa c_c |e| - \kappa e^T K_p e \nonumber \\
							& + \kappa_0 k_d |\dot e| + \kappa_0  \bar c_d \kappa_q \nonumber \\
							\leq & - \kappa k_p |e|^2 + \kappa_0  c_m |\dot e|^2 +\kappa_0  (c_m + c_c) \kappa_q |\dot e| + \kappa_0 c_c  \nonumber \\
							& + \kappa_0 k_d |\dot e| + \kappa_0  \bar c_d \kappa_q,
\end{align}
where we used the following properties
\begin{itemize}
	\item $|\dq| = |\dot e + \dq_d| \leq |\dot e | + \kappa_q$
	\item $\kappa |e| \leq \kappa_0$
	\item $\kappa \leq \kappa_0$
	\item $\|\dot D  - C\| \leq c_m |\dot q|$ for some $c_m >0$
	\item $|\kappa e^T (\dot D  - C) \dot e| \leq \kappa_0  c_m |\dot q| |\dot e| \leq \kappa_0  c_m |\dot e|^2 + \kappa_0  c_m \kappa_q |\dot e|$
	\item Assuming equal gains $k_p:=\|K_p\|$\footnote{Not to be confused with the number of outputs $k_{\vi}$} 
\end{itemize}
In a similar fashion, the fourth term in the addendum \eqref{eq:vcdot} yields the following
\begin{align}
	|\dot \kappa e^T D \dot e | \leq \kappa_0 \bar c_d |\dot e|^2.
\end{align}
Finally
 \begin{align}
  \dot V \leq  &  -\kappa k_p  | e|^2 - (k_d - c_c \kappa_q - \kappa \bar c_d - \kappa_0 c_m) | \dot e|^2  \nonumber \\
   & + (\kappa_0  c_c \kappa^2_q   + c_c  + \bar c_d \kappa_q  + \kappa_0 (c_m + c_c) \kappa_q   +\kappa_0 k_d )|\dot e| \nonumber \\
   & + \kappa_0 (c_c + \bar c_d \kappa_q),
 \end{align}
 which, indeed, can be written in the form given by \eqref{eq:ISSdstricter}, thereby establishing e-ISS. It can be verified that with a small enough $\kappa_0$ and large enough $K_p$,$K_d$, the resulting $\dot V = \dot V_0 + \dot V_c$ is negative definite for large values of $(e,\dot e)$. 
\gap
\begin{remark}
 Note that the above analysis did not include precise characterization of the disturbance input (which will be included in future). Intuitively, the disturbance effects are indirect functions of the model and the reference trajectories of the system (like gravity, Coriolis-centrifugal matrices and the trajectories $\q_d$). A more detailed analysis of PD based tracking and its ISS properties pertaining to walking robots will be included in future.
\end{remark}
\gap

\subsection{PD tracking of state based outputs}
If the desired functions $\q_d$ are state dependent, then we have the following control law
\begin{align}
	u = - K_p (q- \q_d(\q)) - K_d (\dq - \dq_d(q,\dq)),
\end{align}
where we can add and subtract the time based control law to obtain the following
\begin{align}
	u =& - K_p (q- \q_d(t)) - K_d (\dq - \dq_d(t)) \nonumber \\
	&\qquad\underbrace{- K_p( \q_d(t)- \q_d(\q) ) - K_d (\dq_d(t) - \dq_d(q,\dq))}_d ,
\end{align}
where we have the new disturbance input $d$ (called the phase based uncertainty in the context of walking robots \cite{kolathaya2016time}), and as long as this value remains small, the resulting system dynamics is ISS.

 \end{appendices}

\bibliographystyle{plain}
\bibliography{bibdata}

\begin{thebibliography}{10}

\bibitem{videodurus2017isswalking}
{DURUS} walking.
\newblock \url{https://youtu.be/ANS3knFC7uY}.

\bibitem{ames2014human}
Aaron~D Ames.
\newblock Human-inspired control of bipedal walking robots.
\newblock {\em Automatic Control, IEEE Transactions on}, 59(5):1115--1130,
  2014.

\bibitem{ames2013towards}
Aaron~D Ames and Matthew Powell.
\newblock Towards the unification of locomotion and manipulation through
  control lyapunov functions and quadratic programs.
\newblock In {\em Control of Cyber-Physical Systems}, pages 219--240. Springer
  International Publishing, 2013.

\bibitem{TAC:amesCLF}
A.D. Ames, K.~Galloway, K.~Sreenath, and J.W. Grizzle.
\newblock Rapidly exponentially stabilizing control {L}yapunov functions and
  hybrid zero dynamics.
\newblock {\em Automatic Control, IEEE Transactions on}, 59(4):876--891, 4
  2014.

\bibitem{angeli1999input}
David Angeli.
\newblock Input-to-state stability of pd-controlled robotic systems.
\newblock {\em Automatica}, 35(7):1285 -- 1290, 1999.

\bibitem{cai2005results}
Chaohong Cai and A.~R. Teel.
\newblock Results on input-to-state stability for hybrid systems.
\newblock In {\em Proceedings of the 44th IEEE Conference on Decision and
  Control}, pages 5403--5408, 12 2005.

\bibitem{6094435}
J.~Englsberger, C.~Ott, M.~A. Roa, A.~Albu-Schäffer, and G.~Hirzinger.
\newblock Bipedal walking control based on capture point dynamics.
\newblock In {\em 2011 IEEE/RSJ International Conference on Intelligent Robots
  and Systems}, pages 4420--4427, Sept 2011.

\bibitem{ROB:ROB2}
Fathi Ghorbel, B.~Srinivasan, and Mark~W. Spong.
\newblock On the uniform boundedness of the inertia matrix of serial robot
  manipulators.
\newblock {\em Journal of Robotic Systems}, 15(1):17--28, 1998.

\bibitem{umich_mabel}
J.~W. Grizzle, J.~Hurst, B.~Morris, H.~Park, and K.~Sreenath.
\newblock {MABEL}, a new robotic bipedal walker and runner.
\newblock In {\em American Control Conference}, pages 2030--2036, St. Louis,
  MO, USA, 2009.

\bibitem{Hereid_etal_2016}
Ayonga Hereid, Eric~A. Cousineau, Christian~M. Hubicki, and Aaron~D. Ames.
\newblock 3d dynamic walking with underactuated humanoid robots: A direct
  collocation framework for optimizing hybrid zero dynamics.
\newblock In {\em 2016 IEEE International Conference on Robotics and Automation
  (ICRA)}, pages 1447--1454, 5 2016.

\bibitem{hespanha2005input}
J.~P. Hespanha, D.~Liberzon, and A.~R. Teel.
\newblock On input-to-state stability of impulsive systems.
\newblock In {\em Proceedings of the 44th IEEE Conference on Decision and
  Control}, pages 3992--3997, 12 2005.

\bibitem{Hubicki2016}
Christian Hubicki, Jesse Grimes, Mikhail Jones, Daniel Renjewski, Alexander
  Spr{\"o}witz, Andy Abate, and Jonathan Hurst.
\newblock Atrias: Design and validation of a tether-free {3D}-capable
  spring-mass bipedal robot.
\newblock {\em The International Journal of Robotics Research}, 2016.

\bibitem{5651082}
S.~Kajita, M.~Morisawa, K.~Miura, S.~Nakaoka, K.~Harada, K.~Kaneko,
  F.~Kanehiro, and K.~Yokoi.
\newblock Biped walking stabilization based on linear inverted pendulum
  tracking.
\newblock In {\em 2010 IEEE/RSJ International Conference on Intelligent Robots
  and Systems}, pages 4489--4496, Oct 2010.

\bibitem{1643376}
J.~Kasac, B.~Novakovic, D.~Majetic, and D.~Brezak.
\newblock Global positioning of robot manipulators with mixed revolute and
  prismatic joints.
\newblock {\em IEEE Transactions on Automatic Control}, 51(6):1035--1040, June
  2006.

\bibitem{7844077}
J.~H. Kim, S.~m.~Hur, and Y.~Oh.
\newblock Maximum tracking errors in pd-controlled robotic manipulators.
\newblock In {\em 2016 IEEE/SICE International Symposium on System Integration
  (SII)}, pages 676--681, Dec 2016.

\bibitem{4790013}
D.~E. Koditschek.
\newblock Strict global lyapunov functions for mechanical systems.
\newblock In {\em 1988 American Control Conference}, pages 1770--1775, June
  1988.

\bibitem{koditschek-the_robotics_review-1989}
Daniel Koditschek.
\newblock Robot planning and control via potential functions.
\newblock pages 349--367, 1989.

\bibitem{kolathaya2016time}
S.~Kolathaya, A.~Hereid, and A.~D. Ames.
\newblock Time dependent control {L}yapunov functions and hybrid zero dynamics
  for stable robotic locomotion.
\newblock In {\em 2016 American Control Conference (ACC)}, pages 3916--3921, 7
  2016.

\bibitem{kolathaya2016parameter}
Shishir Kolathaya and Aaron~D. Ames.
\newblock Parameter to state stability of control {L}yapunov functions for
  hybrid system models of robots.
\newblock {\em Nonlinear Analysis: Hybrid Systems}, 25:174 -- 191, 2017.

\bibitem{hscc17running}
Wen-Loong Ma, Shishir Kolathaya, Eric~R. Ambrose, Christian~M. Hubicki, and
  Aaron~D. Ames.
\newblock Bipedal robotic running with durus-2d: Bridging the gap between
  theory and experiment.
\newblock In {\em Proceedings of the 20th International Conference on Hybrid
  Systems: Computation and Control}, HSCC '17, pages 265--274, New York, NY,
  USA, 2017. ACM.

\bibitem{MOGR05}
{B.} Morris and {J.W.} Grizzle.
\newblock A restricted {P}oincar\'e map for determining exponentially stable
  periodic orbits in systems with impulse effects: Application to bipedal
  robots.
\newblock In {\em IEEE Conf. on Decision and Control}, Seville, Spain, 2005.

\bibitem{morris2009hybrid}
Benjamin Morris and Jessy~W Grizzle.
\newblock Hybrid invariant manifolds in systems with impulse effects with
  application to periodic locomotion in bipedal robots.
\newblock {\em Automatic Control, IEEE Transactions on}, 54(8):1751--1764,
  2009.

\bibitem{RSS2015_RobustCLF}
Quan Nguyen and Koushil Sreenath.
\newblock Optimal robust control for bipedal robots through control {L}yapunov
  function based quadratic programs.
\newblock In {\em Robotics: Science and Systems (RSS)}, Rome, Italy, 7 2015.

\bibitem{doi:10.5772/61537}
Jorge Orrante-Sakanassi, Victor M.~Hernandez Guzman, and Victor Santibanez.
\newblock New tuning conditions for semiglobal exponential stability of the
  classical pid regulator for rigid robots.
\newblock {\em International Journal of Advanced Robotic Systems}, 12(10):143,
  2015.

\bibitem{371478}
Z.~Qu, D.~M. Dawson, J.~F. Dorsey, and S.~Y. Lim.
\newblock A new class of robust control laws for tracking of robots.
\newblock In {\em [1992] Proceedings of the 31st IEEE Conference on Decision
  and Control}, pages 1408--1409 vol.2, 1992.

\bibitem{ACMRAIBERT}
M.~H. Raibert.
\newblock Legged robots.
\newblock {\em Communications of the ACM}, 29(6):499--514, 1986.

\bibitem{reheralgorithmic}
Jacob~P Reher, Ayonga Hereid, Shishir Kolathaya, Christian~M Hubicki, and
  Aaron~D Ames.
\newblock {\em Algorithmic Foundations of Realizing Multi-Contact Locomotion on
  the Humanoid Robot {DURUS}}.
\newblock Springer Berlin Heidelberg, Berlin, Heidelberg, 2017.

\bibitem{sontag1989smooth}
E.~D. Sontag.
\newblock Smooth stabilization implies coprime factorization.
\newblock {\em IEEE Transactions on Automatic Control}, 34(4):435--443, 4 1989.

\bibitem{Sontag89furtherfacts}
Eduardo~D. Sontag.
\newblock Further facts about input to state stabilization.
\newblock {\em IEEE Trans. Automat. Control}, 35:473--476, 1989.

\bibitem{sontag1991input}
Eduardo~D Sontag.
\newblock Input/output and state-space stability.
\newblock In {\em New Trends in Systems Theory}, pages 684--691. Springer,
  1991.

\bibitem{sontag2008input}
Eduardo~D. Sontag.
\newblock {\em Input to State Stability: Basic Concepts and Results}, pages
  163--220.
\newblock Springer Berlin Heidelberg, 2008.

\bibitem{sontag1995characterizations}
Eduardo~D Sontag and Yuan Wang.
\newblock On characterizations of the input-to-state stability property.
\newblock {\em Systems \& Control Letters}, 24(5):351--359, 1995.

\bibitem{veer2017poincare}
Sushant Veer, Rakesh, and Ioannis Poulakakis.
\newblock Poincare analysis of hybrid periodic orbits of systems with impulse
  effects under external inputs.
\newblock {\em arXiv preprint arXiv:1712.03291}, 2017.

\bibitem{VB05}
M.~Vukobratovi{\'c} and B.~Borovac.
\newblock Zero-moment point---thirty-five years of its life.
\newblock {\em Intl. J. of Humanoid Robotics}, 1(1):157--173, 2005.

\bibitem{wen1988new}
John~T Wen and David~S Bayard.
\newblock New class of control laws for robotic manipulators part 1.
  non--adaptive case.
\newblock {\em International Journal of Control}, 47(5):1361--1385, 1988.

\bibitem{WGCCM07}
E.R. Westervelt, J.W. Grizzle, C.~Chevallereau, J.H. Choi, and B.~Morris.
\newblock {\em Feedback Control of Dynamic Bipedal Robot Locomotion}.
\newblock Automation and Control Engineering. CRC Press, 2007.

\bibitem{210795}
L.~L. Whitcomb, A.~A. Rizzi, and D.~E. Koditschek.
\newblock Comparative experiments with a new adaptive controller for robot
  arms.
\newblock {\em IEEE Transactions on Robotics and Automation}, 9(1):59--70, Feb
  1993.

\bibitem{Yadukumar2013}
Shishir~Nadubettu Yadukumar, Murali Pasupuleti, and Aaron~D. Ames.
\newblock {\em From Formal Methods to Algorithmic Implementation of Human
  Inspired Control on Bipedal Robots}, pages 511--526.
\newblock Springer Berlin Heidelberg, Berlin, Heidelberg, 2013.

\end{thebibliography}

\begin{wrapfigure}{l}{3.2cm}
	\vspace{-4mm}
	\includegraphics[width=0.175\textwidth]{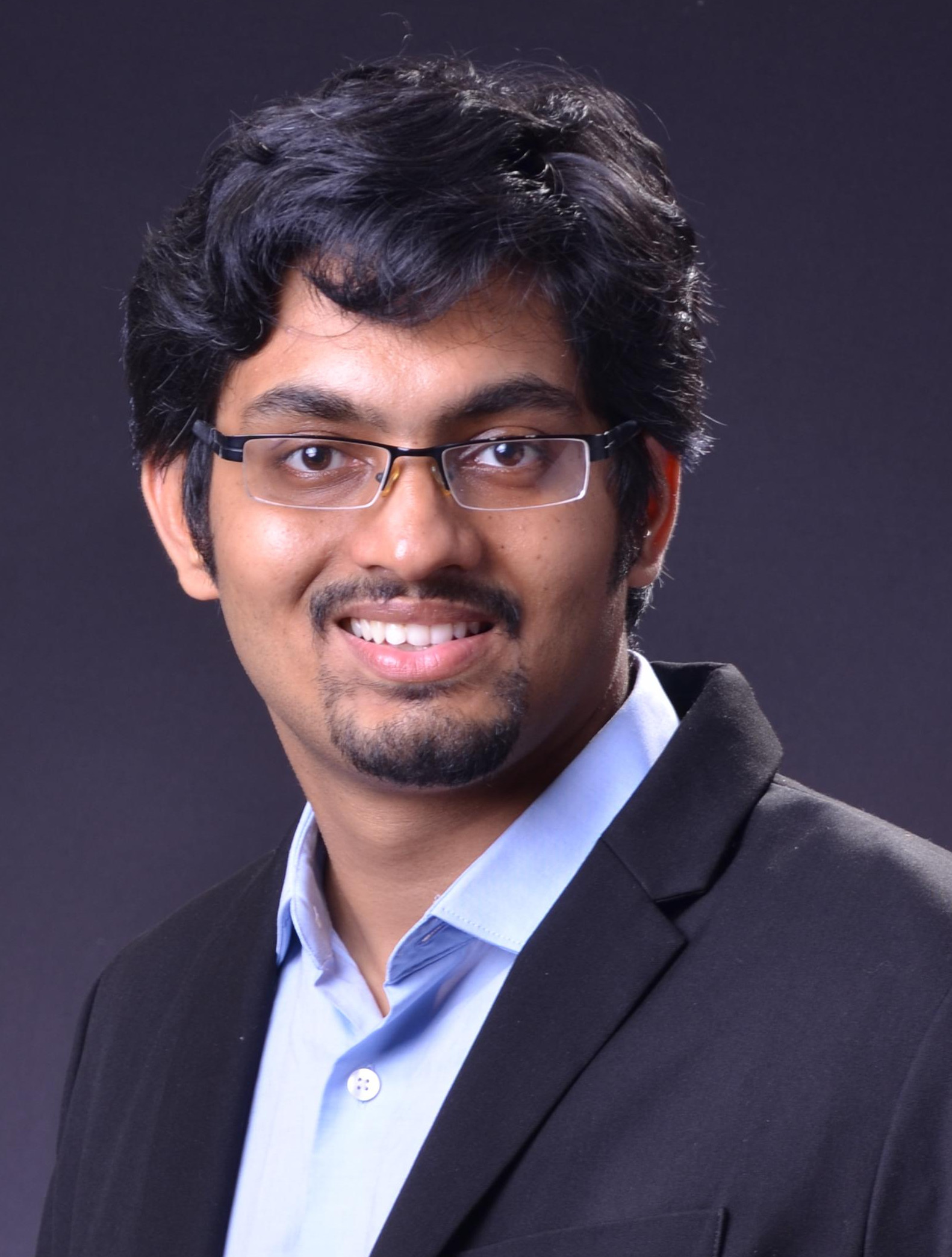}
\end{wrapfigure}
\vspace{4mm}
\noindent \textbf{Shishir Kolathaya} is the Postdoctoral scholar of Mechanical and Civil Engineering at Caltech. He received his Ph.D. degree in Mechanical Engineering (2016) from the Georgia Institute of Technology, M.S. degree in Electrical Engineering (2012) from Texas A\&M University. He received his B.Tech. degree in Electrical Engineering (2008) from the National Institute of Technology, Surathkal. Shishir is interested in nonlinear stability and control of hybrid systems, especially in the domain of walking robots.

\begin{wrapfigure}{l}{3.2cm}
	\vspace{-4mm}
	\includegraphics[width=0.17\textwidth]{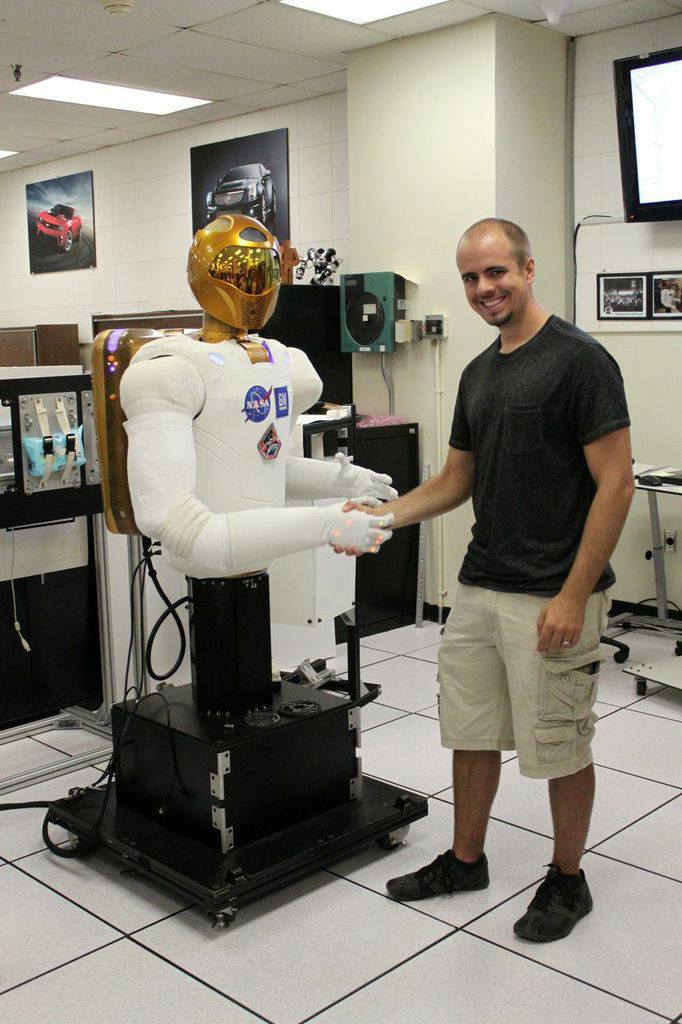}
\end{wrapfigure}
\vspace{4mm}
\noindent \textbf{Jacob Reher} is the graduate student of Mechanical and Civil Engineering at Caltech. He received his B.S. degree in Mechanical Engineering (2013) from the University of Nebraska, Nebraska. Jacob is interested in stability and control of walking robots. His special focus is on realizing human-like behaviors like heel-lift and toe-strikes with the end result being remarkably low cost of transport.

\begin{wrapfigure}{l}{3.2cm}
	\vspace{-4mm}
	\includegraphics[width=0.175\textwidth]{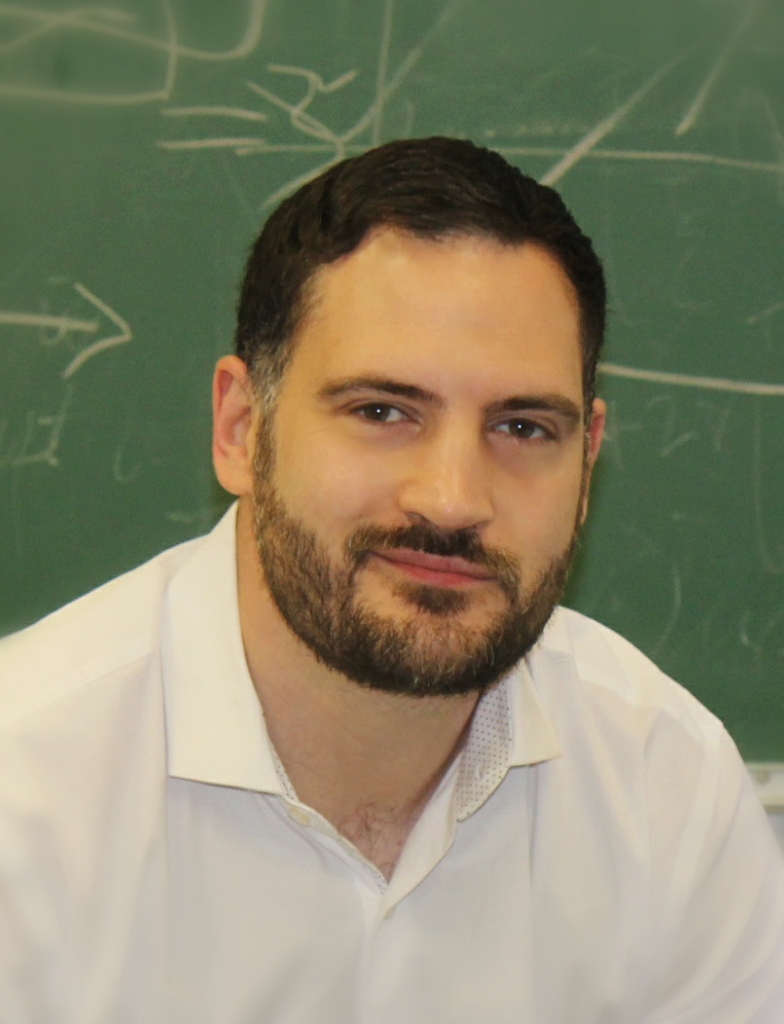}
\end{wrapfigure}
\vspace{4mm}
\noindent \textbf{Aaron D. Ames} is the Bren Professor of Mechanical and Civil Engineering and Control and Dynamical Systems at Caltech.   Prior to joining Caltech in 2017, he was an Associate Professor at Georgia Tech in the Woodruff School of Mechanical Engineering and the School of Electrical \& Computer Engineering.  He received a B.S. in Mechanical Engineering and a B.A. in Mathematics from the University of St. Thomas in 2001, and he received a M.A. in Mathematics and a Ph.D. in Electrical Engineering and Computer Sciences from UC Berkeley in 2006.  He served as a Postdoctoral Scholar in Control and Dynamical Systems at Caltech from 2006 to 2008, and began his faculty career at Texas A\&M University in 2008.  At UC Berkeley, he was the recipient of the 2005 Leon O. Chua Award for achievement in nonlinear science and the 2006 Bernard Friedman Memorial Prize in Applied Mathematics, and he received the NSF CAREER award in 2010 and the 2015 Donald P. Eckman Award. 

\end{document}